\documentclass[a4paper,11pt]{amsart}

\usepackage{amssymb,amsmath,amsthm,mathrsfs,enumerate,graphicx, color}
\usepackage[pdfpagelabels,colorlinks,linkcolor=blue,citecolor=black,urlcolor=blue]{hyperref}
\usepackage{esint}
\usepackage{tikz}
\usetikzlibrary{arrows}
\usepackage{citeref}

\newtheorem{thm}{Theorem}[section]

\newtheorem{cor}[thm]{Corollary}
\newtheorem{lem}[thm]{Lemma}
\newtheorem{prop}[thm]{Proposition}
\newtheorem{defn}[thm]{Definition}
\newtheorem{rem}[thm]{Remark}










\newcommand{\ip}[1]{\langle #1 \rangle} 






\newcommand{\lesi}{\lesssim}

\newcommand{\supp}{\operatorname{supp}}
\newcommand{\f}{\frac}

\newcommand{\om}{\omega}

\newcommand{\vc}{\infty}
\newcommand{\Rn}{\mathbb{R}^n}

\textwidth =160mm \textheight =240mm
\oddsidemargin 0mm
\evensidemargin 0mm
\setlength{\topmargin}{0cm}

\title[Harmonic analysis associated with Bessel operators]{Hardy spaces, Campanato spaces and higher order Riesz transforms associated with Bessel operators}         

\author[T. A. Bui]{The Anh Bui}
\address{School of Mathematical and Physical Sciences, Macquarie University, NSW 2109,
	Australia}
\email{the.bui@mq.edu.au}

\keywords{Bessel operator, heat kernel, Hardy space, Campanato space, higher-order Riesz transform}

\begin{document}
	
	\begin{abstract}
		Let $\nu = (\nu_1, \ldots, \nu_n) \in (-1/2, \vc)^n$, with $n \ge 1$, and let $\Delta_\nu$ be the multivariate Bessel operator defined by  
		\[
		\Delta_{\nu} = -\sum_{j=1}^n\left( \frac{\partial^2}{\partial x_j^2} - \frac{\nu_j^2 - 1/4}{x_j^2} \right).
		\]
		In this paper, we develop the theory of Hardy spaces and BMO-type spaces associated with the Bessel operator $\Delta_\nu$. We then study the higher-order Riesz transforms associated with $\Delta_\nu$. First, we show that these transforms are Calderón-Zygmund operators. We further prove that they are bounded on the  Hardy spaces and BMO-type spaces associated with $\Delta_\nu$.
		
	\end{abstract}
	\date{}

	\maketitle
	
	
	\section{Introduction}\label{sec: intro}
	In this paper, for $\nu\in (-1/2,\vc)^n$ we consider the multi-variate Bessel operator
	\begin{equation} \label{eq:Bessel_operator}
		\Delta_{\nu} = -\sum_{j=1}^n\left( \frac{\partial^2}{\partial x_j^2} - \frac{\nu^2 - 1/4}{x_j^2} \right).
	\end{equation}
	The operator $\Delta_{\nu}$ is a positive self-adjoint operator in $L^2((0,\infty), dx)$. The eigenfunctions of $\Delta_{\nu}$ are $\{\varphi_y\}_{y\in \Rn_+}$, where
	\begin{equation} \label{eq:eigenfunctions}
		\varphi_y(x) = \prod_{j=1}^n(y_jx_j)^{1/2} J_{\nu_j}(y_jx_j), \quad \text{and} \quad \Delta_{\nu} \varphi_y(x) = |y|^2 \varphi_y(x),
	\end{equation}
	where $J_{\alpha}(z)$ is the Bessel function of the first kind of order $\alpha$. See \cite{MS}.
	
	The $j$-th partial derivative associated with $\Delta_{\nu}$ is given by
	\[
	\delta_{\nu_j} = \frac{\partial}{\partial x_j} -\frac{1}{x_j}\Big(\nu_j + \f{1}{2}\Big).
	\]
	Then the  adjoint of $\delta_{\nu_j}$ in $L^2(\mathbb{R}^n_+)$ is
	\[
	\delta_{\nu_j}^* = -\frac{\partial}{\partial x_j} -\frac{1}{x_j}\Big(\nu_j + \f{1}{2}\Big).
	\]
	It is straightforward that
	\begin{equation}\label{eq-Delta nu}
	\Delta_\nu= \sum_{j=1}^{n} \delta_{\nu_j}^* \delta_{\nu_j}.
	\end{equation}
	The main aim of this paper is to develop the theory of Hardy spaces associated to Bessel operator $\Delta_\nu$ and investigate the boundedness of the higher order Riesz transforms associated with the Bessel operator.
	
	\bigskip
	
	\textbf{Hardy spaces associated to Bessel operator $\Delta_\nu$.} The theory of Hardy spaces associated with differential operators is a rich and active area of research in harmonic analysis, and it has attracted considerable attention. See, for example, \cite{BDT, BDK, CFYY, DY, Dziu, HLMMY, JY, PK, SY, Yan, YZ, YZ2, YYZ, YYZ2} and the references therein.
	
	Regarding the Bessel operator, for $p\in (0,1]$, we first define the Hardy space $H^p_{\Delta_\nu}(\Rn_+)$ associated with Bessel operator $\Delta_\nu$ as the completion of 
	\[
	\{f\in L^2(\Rn_+): \mathcal M_{\Delta_\nu}f:=\sup_{t>0}|e^{-t\Delta_\nu}f| \in L^p(\Rn_+)\}
	\]
	under the norm 
	\[
	\|f\|_{H^p_{\Delta_\nu}(\Rn_+)} := \|\mathcal M_{\Delta_\nu}f\|_p.
	\]
	
	In \cite{F}, Fridli introduced the atomic Hardy-type space $H^1_F(\Rn_+)$ for the case $n=1$ as follows.  
	A measurable function $a$ on $(0, \infty)$ is called an $F$-atom if it satisfies one of the following conditions:
	
	\begin{itemize}
		\item[(a)] There exists $\delta > 0$ such that
		\[
		a = \frac{1}{\delta} \chi_{(0,\delta)},
		\]
		where $\chi_{(0,\delta)}$ denotes the characteristic function of the interval $(0, \delta)$.
		
		\item[(b)] There exists a bounded interval $I \subset (0, \infty)$ such that $\operatorname{supp} a \subset I$,  
		\[
		\int_I a(x) \, dx = 0,
		\]
		and
		\[
		\|a\|_{L^\infty((0,\infty), dx)} \leq \frac{1}{|I|},
		\]
		where $|I|$ is the length of $I$.
	\end{itemize}
	
	A function $f \in L^1((0, \infty), dx)$ belongs to $H^1_F((0, \infty), dx)$ if and only if it can be expressed as  
	\[
	f(x) = \sum_{j=1}^{\infty} \alpha_j a_j(x),
	\]
	where for each $j \in \mathbb{N}$, $a_j$ is an $F$-atom and $\alpha_j \in \mathbb{C}$, satisfying  
	\[
	\sum_{j=1}^{\infty} |\alpha_j| < \infty.
	\]
	
	The norm in $H^1_F(0,\infty)$ is defined by  
	\begin{equation} \label{eq:H1F_norm}
		\|f\|_{H^1_F(0,\infty)} = \inf \sum_{j=1}^{\infty} |\alpha_j|,
	\end{equation}
	where the infimum is taken over all representations of $f$ in terms of absolutely summable sequences $\{\alpha_j\}_{j\in\mathbb{N}}$ with  
	\[
	f = \sum_{j=1}^{\infty} \alpha_j a_j, \quad a_j \text{ being an } F\text{-atom for each } j \in \mathbb{N}.
	\]
	In \cite{BDT}, it was proved that the two Hardy spaces  $H^1_{\Delta_\nu}(\mathbb R_+)$ and $H^1_{F}(\mathbb R_+)$ coincide with equivalent norms. Our first aim is to extend this result to $0<p\le 1$ and $n\ge 1$. To do this, for $x\in \Rn_+$ define
	\begin{equation}\label{eq- critical function}
		\rho(x)=\f{1}{16}\min\{x_1,\ldots,x_n\}.
	\end{equation}
	It is clear that for each $x\in (0,\vc)^n$ we have $\rho(y)\sim \rho(x)$ for $y\in B(x,\rho(x))$. Throughout the paper, we will use this frequently without giving any explanation. 
	
	\begin{defn}\label{def: rho atoms}
		Let $\nu\in (-1/2,\vc)^n$ and $\rho$ be the function as in \eqref{eq- critical function}. Let $p\in (0,1]$. A function $a$ is called a  $(p,\rho)$-atom associated to the ball $B(x_0,r)$ if
		\begin{enumerate}[{\rm (i)}]
			\item ${\rm supp}\, a\subset B(x_0,r)$;
			\item $\|a\|_{L^\vc}\leq |B(x_0,r)|^{-1/p}$;
			\item $\displaystyle \int a(x)x^\alpha dx =0$ for all multi-indices $\alpha$ with $|\alpha| \le n(1/p-1)$ if $r< \rho(x_0)$.
		\end{enumerate}
	\end{defn}
	
	If $\rho(x_0) =1$, the $(p,\rho)$ coincides with the local atoms defined in \cite{G}. Therefore, we can view a $(p,\rho)$ atom as a local atom associated to the critical function $\rho$.
	Let $p\in (0,1]$. We  say that $f=\sum_j	\lambda_ja_j$ is an atomic $(p,\rho)$-representation if
	$\{\lambda_j\}_{j=0}^\infty\in l^p$, each $a_j$ is a $(p,\rho)$-atom,
	and the sum converges in $L^2(\mathbb{R}^n_+)$. The space $H^{p}_{\rho}(\mathbb{R}^n_+)$ is then defined as the completion of
	\[
	\left\{f\in L^2(\mathbb{R}^n_+):f \ \text{has an atomic
		$(p,\rho)$-representation}\right\}
	\]
	under the norm given by
	$$
	\|f\|_{H^{p}_{\rho}(\mathbb{R}^n_+)}=\inf\Big\{\Big(\sum_j|\lambda_j|^p\Big)^{1/p}:
	f=\sum_j \lambda_ja_j \ \text{is an atomic $(p,\rho)$-representation}\Big\}.
	$$

	\begin{rem}
		\label{rem1}
		In Definition \ref{def: rho atoms}, for a $(p,\rho)$ atom $a$ associated with the ball $B(x_0,r)$, if we impose an additional  restriction  $r\le \rho(x_0)$, then the Hardy spaces defined by using these atoms are equivalent to those defined by $(p,\rho)$ atoms as in Definition \ref{def: rho atoms}, which is without the restriction on $r\le \rho(x_0)$.
	\end{rem}
	
	In the particular case when $n=p=1$,  atoms defined in Definition \ref{def: rho atoms} are a little bit different from $F$-atoms. However, it is not difficult to show that the Hardy spaces $H^1_\rho(\mathbb R_+)$ and $H^1_F(\mathbb R_+)$ are identical. 
	
	Our first main result is the following.
	\begin{thm}\label{mainthm2s}
		Let $\nu\in (-1/2,\vc)^n$ and $\gamma_\nu =  \nu_{\min}+1/2 $, where $\nu_{\min}=\min\{\nu_j: j=1,\ldots, n\}$. For $p\in (\f{n}{n+\gamma_\nu},1]$, we have
		\[
		H^{p}_{\rho}(\mathbb{R}^n_+)\equiv H^p_{\Delta_\nu}(\mathbb{R}^n_+)
		\]
		with equivalent norms.
	\end{thm}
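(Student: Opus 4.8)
The plan is to establish the two continuous inclusions $H^p_\rho(\Rn_+)\hookrightarrow H^p_{\Delta_\nu}(\Rn_+)$ and $H^p_{\Delta_\nu}(\Rn_+)\hookrightarrow H^p_\rho(\Rn_+)$ separately. Throughout, the essential analytic input is the collection of pointwise bounds on the heat kernel $p_t(x,y)$ of $e^{-t\Delta_\nu}$: a Gaussian upper bound, Hölder/derivative regularity in both variables, and---most importantly for the Bessel setting---the fact that $p_t(x,y)$ vanishes at the walls $\{x_j=0\}$ at a rate governed by $\gamma_\nu=\nu_{\min}+1/2$. This boundary vanishing is precisely what dictates the lower endpoint $p>\frac{n}{n+\gamma_\nu}$.

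For the inclusion $H^p_\rho\hookrightarrow H^p_{\Delta_\nu}$ it suffices, by the atomic definition of $H^p_\rho$, the $L^2$-convergence of atomic representations, and the $L^2$-boundedness of $\mathcal M_{\Delta_\nu}$, to prove the uniform estimate $\|\mathcal M_{\Delta_\nu}a\|_{L^p}\lesssim 1$ over all $(p,\rho)$-atoms $a$ adapted to a ball $B=B(x_0,r)$. I would split this over $4B$ and its complement. On $4B$, Hölder's inequality together with the $L^2$ bound for $\mathcal M_{\Delta_\nu}$ and the normalization $\|a\|_{L^2}\le|B|^{1/2-1/p}$ yields a contribution $\lesssim 1$. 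Off $4B$ I distinguish the two regimes of Definition \ref{def: rho atoms}. When $r<\rho(x_0)$ the atom has vanishing moments up to order $\lfloor n(1/p-1)\rfloor$, so subtracting the Taylor polynomial of $p_t(x,\cdot)$ at $x_0$ and invoking the kernel's derivative estimates produces enough decay in $|x-x_0|$ to make the tail $L^p$-integrable. When $r\ge\rho(x_0)$, so that $B$ is comparable to its distance to the boundary, there are no moments; instead I use the vanishing of $p_t(x,y)$ as $y\to 0$ at rate $\gamma_\nu$, which---thanks to $r\ge\rho(x_0)\sim\min_j x_{0,j}$---again gives an $L^p$ tail bound, now convergent exactly when $p>\frac{n}{n+\gamma_\nu}$.

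The reverse inclusion $H^p_{\Delta_\nu}\hookrightarrow H^p_\rho$ is the substantive direction. I would first reduce the radial heat maximal function $\mathcal M_{\Delta_\nu}$ to a grand maximal function taken over a class of test functions normalized at the critical scale $\rho$, using the kernel regularity and a dyadic-smoothing argument. Given $f$ with $\mathcal M_{\Delta_\nu}f\in L^p$, I would then perform a Calderón--Zygmund/Whitney decomposition of the superlevel sets $\Omega_k=\{\mathcal M_{\Delta_\nu}f>2^k\}$, writing $f=\sum_{k}\sum_i b_{k,i}$ with each $b_{k,i}$ supported on a Whitney ball $B_{k,i}=B(x_{k,i},r_{k,i})$. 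On balls with $r_{k,i}<\rho(x_{k,i})$ I subtract the appropriate polynomial projection to enforce condition (iii), obtaining cancellative $(p,\rho)$-atoms; on balls with $r_{k,i}\ge\rho(x_{k,i})$ I retain the pieces as non-cancellative $(p,\rho)$-atoms, which is legitimate precisely because no moments are then required. A summation of the standard type gives $\sum_{k,i}|\lambda_{k,i}|^p\lesssim\sum_k 2^{kp}|\Omega_k|\lesssim\|\mathcal M_{\Delta_\nu}f\|_{L^p}^p$, and hence $\|f\|_{H^p_\rho}\lesssim\|f\|_{H^p_{\Delta_\nu}}$.

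The step I expect to be the main obstacle is the treatment of the singular boundary $\{x_j=0\}$, and in particular the non-cancellative atoms produced there. Away from the walls the argument is a routine adaptation of the Gaussian-bound theory of operator-adapted Hardy spaces, but the pieces supported near the boundary must be controlled using the precise order-$\gamma_\nu$ vanishing of $p_t$ together with careful bookkeeping of the critical function $\rho$; it is exactly this analysis that fixes the endpoint $p>\frac{n}{n+\gamma_\nu}$. A secondary technical point is ensuring that the polynomial corrections used to create vanishing moments preserve the $L^2$-convergence of the representation, which I would handle by a standard truncation-and-limiting argument.
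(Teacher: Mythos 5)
Your first inclusion, $H^p_\rho(\Rn_+)\hookrightarrow H^p_{\Delta_\nu}(\Rn_+)$, is essentially the paper's argument: Hölder plus $L^2$-boundedness of $\mathcal M_{\Delta_\nu}$ near the ball, Taylor expansion against the vanishing moments when $r<\rho(x_0)$, and the order-$\gamma_\nu$ boundary decay of the heat kernel when $r\sim\rho(x_0)$, with the exponent $p>\f{n}{n+\gamma_\nu}$ emerging from the tail integral exactly as you describe (the paper normalizes to $r\le\rho(x_0)$ via Remark \ref{rem1} rather than treating $r>\rho(x_0)$ directly, but this is cosmetic).

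For the reverse inclusion your route is genuinely different from the paper's, and it is here that your sketch has a real gap. You propose the classical Fefferman--Stein path: control the radial maximal function by a grand maximal function adapted to $\rho$, then run a Calder\'on--Zygmund/Whitney decomposition of the level sets $\{\mathcal M_{\Delta_\nu}f>2^k\}$. The step you dismiss as a ``dyadic-smoothing argument'' --- passing from the single radial maximal function $\sup_{t>0}|e^{-t\Delta_\nu}f|$ to a grand maximal function strong enough to drive the atomic decomposition --- is the deepest part of that program; for operator-adapted Hardy spaces it is precisely the content of the Song--Yan theorem \cite{SY} (and of \cite{BDK} in the local setting), not a routine reduction, and your proposal gives no argument for it. The paper avoids this entirely: it quotes \cite{SY} to decompose $f$ into $(p,N)_{\Delta_\nu}$-atoms of the form $a=\Delta_\nu^N b$ with $N>n(1/p-1)$, and then converts each such atom into $(p,\rho)$-atoms. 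For balls with $r_B\ge\rho(x_B)$ the atom is already a $(p,\rho)$-atom; for $r_B<\rho(x_B)$ the key new ingredient is the almost-orthogonality estimate \eqref{eq-integral of a}, which bounds $\big|\int(x-x_B)^\alpha a(x)\,dx\big|$ by $|B|^{1-1/p}r_B^{|\alpha|}(r_B/\rho(x_B))^{2N-|\alpha|}$ using the fact that $\Delta_\nu^N(x-x_B)^\alpha$ involves inverse powers of $x$ controlled by $\rho(x_B)^{-1}$ on $B$; the small moments are then redistributed over dyadic annuli up to the critical scale $2^{j_0}r_B\sim\rho(x_B)$ via dual polynomial bases, producing exact-moment atoms plus a final non-cancellative atom at scale $\rho(x_B)$. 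If you wish to keep your maximal-function route you must either prove the grand-maximal equivalence for $\Delta_\nu$ from scratch or cite it; otherwise the paper's atom-conversion argument is the shorter path, and the estimate \eqref{eq-integral of a} is the idea your proposal is missing.
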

	For  $\nu \in (-1/2,\vc)^n$, $\f{n}{n+\gamma_\nu}$ is strictly less than $1$. When $\nu_j\to \vc$, we have $\f{n}{n+\gamma_\nu}\to 0$. In general, for larger values of $\nu_j$, we have a larger range of $p$.\\
	
	As a counter part, we investigate a BMO type space, which will be proved to be a dual space of the Hardy space $H^p_\rho(\Rn_+)$. 
	
	Let $P \in \mathcal{P}_M$ be the set of all polynomials  of degree at most $M$.  For any $g \in L^1_{\rm loc} (\Rn)$ and any ball $B \subset \Rn$, we denote $P_B^M g$ the \textit{minimizing polynomial}
	of $g$ on the ball $B$ with  degree  $\leq M$, which means that $P_B^M g$ is the unique polynomial $P \in \mathcal{P}_M$
	such that,
	\begin{align} \label{minipoly}
		\int_B [g(x)-P(x)]x^\alpha dx  =0 \quad \mbox{for every } |\alpha|\le M.
	\end{align}
	It is known that if $g$ is locally integrable, then $P^M_B g$ uniquely exists (see \cite{JTW}). 
	We define the local Campanato spaces associated to critical functions as follows.
	\begin{defn}\label{defn1}
		Let $\nu\in (-1/2,\vc)^n$. Let $\rho$ be the critical function as in \eqref{eq- critical function}. 
		Let $s \geq 0$, $1 \leq q \leq \infty$ and $M \in \mathbb{N}$.
		The local Campanato space $BMO^{s,M}_{\rho}(\Rn_+)$ associated to $\rho$ is defined to
		be the space of all locally $L^1$ functions $f$ on $\Rn_+$ such that
		\begin{equation*}
			\begin{split}
				\|f\|_{BMO^{s, M}_{\rho}(\Rn_+)}&:=\sup_{\substack{B: \  {\rm balls}\\ r_B < \rho (x_B)}}    \frac{1}{|B|^{s/n}}
				 \Big(\f{1}{|B|}\int_B|f(x)-P^M_Bf(x)|^2  dx\Big)^{1/2}  \\
				& \quad\quad +\sup_{\substack{B: \ {\rm balls}\\ r_B \geq \rho (x_B)} }   \frac{1}{|B|^{s/n}}
				 \Big(\f{1}{|B|}\int_B|f(x)|^2 dx  \Big)^{1/2} <\infty.
			\end{split}
		\end{equation*}
	Here, $x_B$ and $r_B$ denote the center and radius of the ball $B$, respectively.
	\end{defn}
	
In Definition~\ref{defn1}, when $r_B < \rho(x_B)$, we have $B \subset \mathbb{R}^n_+$. In this case,   $P_B^M f$ should be understood as $P_B^M \tilde{f}$, where $\tilde{f}$ denotes the zero extension of $f$ to $\mathbb{R}^n$. However, for convenience, we continue to write $P_B^M f$ without risk of confusion.

The we have the following result regarding the duality of the Hardy space $H^p_{\Delta_\nu}(\mathbb R^n_+)$.
\begin{thm} \label{mainthm-dual}  Let $\nu\in (-1/2,\vc)^n$ and $\gamma_\nu =  \nu_{\min}+1/2$, where $\nu_{\min}=\min\{\nu_j: j=1,\ldots, n\}$. Let $\rho$ be as in \eqref{eq- critical function}. Then for  $p \in (\f{n}{n+\gamma_\nu},1]$, we have
	\begin{align*}
		\big(H^p_{\Delta_\nu} (\mathbb{R}^n_+)\big)^\ast = BMO^{s,M}_{\rho } (\mathbb{R}^n_+),
		\quad \mbox{where } s: = n (1/p-1)
	\end{align*}
	for all $M\in \mathbb N$ with $M\ge \lfloor s\rfloor$, where $s$ is the greatest integer smaller than or equal to $s$.
\end{thm}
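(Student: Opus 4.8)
The plan is to invoke Theorem \ref{mainthm2s}, which gives $H^p_{\Delta_\nu}(\Rn_+)\equiv H^p_\rho(\Rn_+)$ with equivalent norms, and then to identify $(H^p_\rho(\Rn_+))^\ast$ with $BMO^{s,M}_\rho(\Rn_+)$ by the classical two-sided Campanato duality argument, adapted to the local structure carried by the critical function $\rho$. Before the two inclusions, I would record a reduction lemma: for $M\ge M_0:=\lfloor s\rfloor$ the spaces $BMO^{s,M}_\rho(\Rn_+)$ all coincide with equivalent norms, so that one may work with $M=M_0$. One direction is immediate, since $P^{M_0}_B g$ is an admissible competitor in the degree-$\le M$ minimization \eqref{minipoly}, giving $\|g-P^M_Bg\|_{L^2(B)}\le\|g-P^{M_0}_Bg\|_{L^2(B)}$; the reverse requires bounding $\|P^M_Bg-P^{M_0}_Bg\|_{L^2(B)}$, which, being a polynomial, is estimated coefficientwise and is controlled because the Campanato exponent $s$ lies below $M_0+1$.

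For the inclusion $BMO^{s,M}_\rho(\Rn_+)\subset(H^p_\rho(\Rn_+))^\ast$, fix $g\in BMO^{s,M_0}_\rho(\Rn_+)$ and define $\ell_g(f)=\int_{\Rn_+}fg\,dx$ on finite linear combinations of $(p,\rho)$-atoms. For an atom $a$ on $B=B(x_0,r)$ I would split into the two regimes of Definition \ref{def: rho atoms}. If $r<\rho(x_0)$, the moment condition (iii) gives $\int a\,x^\alpha\,dx=0$ for $|\alpha|\le M_0$, so $\int ag=\int a(g-P^{M_0}_Bg)$, and Hölder together with (ii) yields $|\int ag|\le\|a\|_\infty|B|^{1/2}(\int_B|g-P^{M_0}_Bg|^2)^{1/2}\lesssim|B|^{-1/p+1+s/n}\|g\|_{BMO^{s,M_0}_\rho}=\|g\|_{BMO^{s,M_0}_\rho}$, using $s/n=1/p-1$. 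If $r\ge\rho(x_0)$, I drop the polynomial and bound $\int ag$ directly by the second term of the norm in Definition \ref{defn1}. The uniform estimate over atoms, the atomic decomposition, and the nesting $\ell_p\hookrightarrow\ell_1$ valid for $0<p\le1$ (so that $\sum_j|\lambda_j|\le(\sum_j|\lambda_j|^p)^{1/p}$) then give $|\ell_g(f)|\lesssim\|g\|_{BMO^{s,M_0}_\rho}\|f\|_{H^p_\rho}$, and $\ell_g$ extends by density.

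For the converse $(H^p_\rho(\Rn_+))^\ast\subset BMO^{s,M}_\rho(\Rn_+)$, the starting ingredient is that $L^2$-normalized atoms generate the same space $H^p_\rho(\Rn_+)$; concretely, for every ball $B$ one has $\|h\|_{H^p_\rho}\lesssim|B|^{1/p-1/2}\|h\|_{L^2(B)}$ whenever $\supp h\subset B$ and, in case $r_B<\rho(x_B)$, $h$ has vanishing moments up to order $M_0$. Given $\ell\in(H^p_\rho(\Rn_+))^\ast$, this makes $h\mapsto\ell(h)$ a bounded functional on the Hilbert space $L^2_B$ of such $h$, of norm $\lesssim|B|^{1/p-1/2}\|\ell\|$. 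The Riesz representation theorem produces $g_B\in L^2_B$ with $\ell(h)=\langle h,g_B\rangle$ for all $h\in L^2_B$ and $(\frac{1}{|B|}\int_B|g_B|^2)^{1/2}=|B|^{-1/2}\|g_B\|_{L^2(B)}\lesssim|B|^{1/p-1}\|\ell\|=|B|^{s/n}\|\ell\|$. On a nested pair $B\subset B'$ the difference $g_B-g_{B'}$ annihilates every admissible $h$ supported in $B$, hence is a polynomial of degree $\le M_0$ on $B$; exhausting $\Rn_+$ by an increasing sequence of balls therefore glues the $g_B$ into a single locally $L^2$ function $g$, well defined modulo polynomials of degree $\le M_0$. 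Translating back, $g-P^{M_0}_Bg=g_B$ on subcritical balls and $g=g_B$ on supercritical balls, which is exactly the finiteness of $\|g\|_{BMO^{s,M_0}_\rho}$ together with $\|g\|_{BMO^{s,M_0}_\rho}\lesssim\|\ell\|$, and one checks $\ell(f)=\int fg$ on the dense class.

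The main obstacle is the gluing in the last step across the location-dependent critical scale $\rho$. In contrast to the homogeneous global case, the two regimes must be reconciled: subcritical balls carry moment constraints and pin $g$ down only modulo polynomials, while supercritical balls fix $g$ outright with no polynomial freedom. Verifying that the two families of local representatives patch consistently as $\rho(x_B)$ varies with $x_B$, that the polynomial ambiguity coming from small balls is compatible with the absolute $L^2$-average condition on large balls, and that the exhaustion limit of the $g_B$ exists, is the technical crux; the $M$-independence lemma of the first paragraph and the $L^\infty$-versus-$L^2$ atom equivalence are the supporting ingredients that make it go through.
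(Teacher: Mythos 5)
Your overall strategy is sound and both inclusions can be made to work, but you take a genuinely different route from the paper in each direction. For $BMO^{s,M}_{\rho}(\Rn_+)\subset (H^p_{\Delta_\nu}(\Rn_+))^\ast$ you pair $g$ directly with a $(p,\rho)$-atom, kill $P^{M_0}_Bg$ using the moment conditions, and apply H\"older; the paper instead runs the pairing through the Calder\'on reproducing formula of Lemma \ref{07}, the Carleson-measure estimate of Lemma \ref{03}, and the Coifman--Meyer--Stein tent-space duality. Your direct argument is more elementary and is legitimate here precisely because Theorem \ref{mainthm2s} has already replaced the operator-adapted atoms by $(p,\rho)$-atoms carrying genuine polynomial cancellation; the paper's heavier machinery is what one would need if only the $(p,N)_{\Delta_\nu}$-atoms were available. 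Note that for $M>\lfloor s\rfloor$ your argument genuinely requires the $M$-independence lemma you state at the outset (the atom only annihilates polynomials of degree $\le\lfloor s\rfloor$, while $P^M_Bg$ has higher degree), whereas the paper obtains the independence in $M$ only a posteriori, as a corollary of the duality; so that lemma is a real ingredient of your route, not a convenience, and its ``coefficientwise'' proof should be written out.

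For the converse inclusion the paper does not glue local representatives over an exhaustion: it takes the partition of unity $\{\psi_\xi\}$ subordinate to the critical balls $B_\xi=B(x_\xi,\rho(x_\xi))$ from Lemma \ref{lem-covering lemma}, represents each functional $f\mapsto\ell(\psi_\xi f)$ by some $g_\xi\in L^2(B_\xi)$, and defines $g=\sum_\xi 1_{B_\xi}g_\xi$ globally in one stroke; the $BMO$ bounds on arbitrary sub- and supercritical balls are then read off from $\ell(f)=\int fg$ together with the quotient duality $\|g\|_{(L^2_0(B))^\ast}=\inf_{P\in\mathcal{P}_M}\|g-P\|_{L^2(B)}$. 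This sidesteps exactly the patching issue you flag as the ``technical crux.'' Your exhaustion can be repaired: take an increasing family of supercritical balls exhausting $\Rn_+$ (for instance $B((R,\dots,R),\sqrt{n}R)\supset(0,2R)^n$ is supercritical since $\rho((R,\dots,R))=R/16$), on which the representatives carry no polynomial ambiguity and are mutually consistent, so $g$ is pinned down exactly; the subcritical estimate then follows because $g-g_B$ annihilates $L^2_0(B)$ and hence agrees on $B$ with an element of $\mathcal{P}_{M_0}$. As written, however, this step is acknowledged rather than carried out. Finally, the inequality $\|h\|_{H^p_{\rho}}\lesssim|B|^{1/p-1/2}\|h\|_{L^2(B)}$ for admissible $h$, which you simply assert as the ``$L^2$-versus-$L^\infty$ atom equivalence,'' is a substantive input: it is where the heat-kernel decay factor $(1+\sqrt t/\rho)^{-\gamma_\nu}$ and the restriction $p>\f{n}{n+\gamma_\nu}$ enter this half of the argument, and the paper proves it as \eqref{eq:L2} from Proposition \ref{prop- delta k pt d>2}; your proof should either reproduce that estimate or cite it explicitly.
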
	

Due to Theorem \ref{mainthm-dual}, for $s\ge 0$ we define $BMO^{s}_{\rho } (\mathbb{R}^n_+)$ as any space $BMO^{s,M}_{\rho } (\mathbb{R}^n_+)$ with $M\in \mathbb N$ with $M\ge \lfloor s\rfloor$.

\bigskip

\noindent \textbf{Riesz transforms associated to Bessel operators.} The study of Riesz transforms associated with differential operators is a central topic in harmonic analysis and has been extensively investigated. See, for example, \cite{BDT, Betancor1, Betancor2, BD, Muc, MS, NS, NS2, MST1, MST2, Th} and the references therein. Let $k=(k_1,\ldots, k_n)\in \mathbb N^n$ we consider the higher order Riesz transform $\delta_\nu^k \Delta_\nu^{-|k|/2}$, where $\delta_\nu^k=\delta_{\nu_n}^{k_n}\ldots \delta_{\nu_1}^{k_1}$. See Section 4 for the definition of $\Delta_\nu^{-|k|/2}$. Regarding the Riesz transform associated to Bessel operators, in the $1$-dimensional case $n=1$, it was proved in \cite{Betancor2} that the Riesz transform $\delta_\nu \Delta_\nu^{-1/2}$ is a Calder\'on-Zygmund operator. In \cite{Betancor1} (also for the case $n=1$), a different version of the higher order Riesz transform in the Bessel setting was investigated. Note that  the higher order Riesz transform \cite{Betancor2} are defined through the higher order  Riesz transform associated to Laplacian of Bessel-type operator 
\[
-\frac{\partial^2}{\partial x^2} - \frac{\nu+1}{x} \frac{\partial}{\partial x},
\]
and it is definitely not the higher Riesz transform $\delta_\nu^k \Delta_\nu^{-|k|/2}$ as expected due to some technical reasons.

Our first main result in this section is to show that the operator $\delta_\nu^k \Delta_\nu^{-|k|/2}$	is a Calder\'on-Zygmund operator. 
	
\begin{thm}\label{thm-Riesz transform} Let $\nu\in (-1/2,\vc)^n$, $\nu_{\min}=\min\{\nu_j: j=1,\ldots, n\}$ and $k=(k_1,\ldots, k_n)\in \mathbb N^n$ be a multi-index. Then the Riesz transform $\delta^k_\nu \Delta_\nu^{-|k|/2}$ is a Calder\'on-Zygmund operator. That is,	$\delta^k_\nu \Delta_\nu^{-|k|/2}$ is bounded on $L^2(\Rn_+)$ and its kernel  $\delta^k_\nu \Delta_\nu^{-|k|/2}(x,y)$ satisfies the following estimates:
	\[
	|\delta^k_\nu \Delta_\nu^{-|k|/2}(x,y)|\lesi  \f{1}{|x-y|^n} , \ \ \ x\ne y
	\]
	and
	\[
	\begin{aligned}
		| \delta^k_\nu \Delta_\nu^{-|k|/2}(x,y)-\delta^k_\nu \Delta_\nu^{-|k|/2}(x,y')|&+| \delta^k_\nu \Delta_\nu^{-|k|/2}(y,x)-\delta^k_\nu \Delta_\nu^{-|k|/2}(y',x)|
		&\lesi \Big(\f{|y-y'|}{x-y}\Big)^{\nu_{\min}+1/2}\f{1}{|x-y|^n},
	\end{aligned}
	\]
	whenever $|y-y'|\le |x-y|/2$.
\end{thm}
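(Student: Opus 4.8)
The plan is to reduce everything to the one–dimensional case via a tensor-product (subordination) argument and then establish the required kernel bounds for $\delta_\nu^k\Delta_\nu^{-|k|/2}$ using the heat-kernel representation of the negative power. First I would express
\[
\Delta_\nu^{-|k|/2} = \frac{1}{\Gamma(|k|/2)}\int_0^\infty t^{|k|/2-1} e^{-t\Delta_\nu}\,\frac{dt}{t}\cdot t,
\]
(more precisely $\Delta_\nu^{-|k|/2}=c_k\int_0^\infty t^{|k|/2}e^{-t\Delta_\nu}\,\frac{dt}{t}$), so that
\[
\delta_\nu^k\Delta_\nu^{-|k|/2}(x,y)=c_k\int_0^\infty t^{|k|/2}\,\delta_\nu^k p_t(x,y)\,\frac{dt}{t},
\]
where $p_t(x,y)=\prod_{j=1}^n p_t^{\nu_j}(x_j,y_j)$ is the heat kernel of $\Delta_\nu$, which factors because $\Delta_\nu$ acts coordinatewise. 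Since $\delta_\nu^k=\delta_{\nu_n}^{k_n}\cdots\delta_{\nu_1}^{k_1}$ also acts variable-by-variable, the integrand factors into a product of one-dimensional expressions $\delta_{\nu_j}^{k_j}p_t^{\nu_j}(x_j,y_j)$. Thus the whole problem is controlled by sharp bounds on the derivatives of the one-dimensional Bessel heat kernel.

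The key one-dimensional input I would establish (or quote from \cite{BDT, MS, Betancor2}) are Gaussian-type pointwise estimates for $\delta_{\nu_j}^{k_j} p_t^{\nu_j}$ together with their $t$-derivative/Hölder analogues. Concretely, I expect bounds of the shape
\[
\big|\delta_{\nu_j}^{k_j} p_t^{\nu_j}(x_j,y_j)\big|\lesssim t^{-(1+k_j)/2}\Big(1+\frac{\sqrt t}{x_j}\Big)^{\gamma_{\nu_j}}\exp\!\Big(-\frac{c(x_j-y_j)^2}{t}\Big),
\]
where the factor $\big(1+\sqrt t/x_j\big)^{\gamma_{\nu_j}}$ with $\gamma_{\nu_j}=\nu_j+1/2$ encodes the singular behaviour near the boundary $x_j=0$ and is exactly what produces the exponent $\nu_{\min}+1/2$ in the smoothness estimate. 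Taking the product over $j$, inserting into the time integral, and integrating in $t$ using the standard estimate $\int_0^\infty t^{(|k|-n)/2-1}e^{-c|x-y|^2/t}\,dt\sim |x-y|^{-n}$ (the powers of $t$ match because the total order of differentiation is $|k|$ and the prefactor is $t^{|k|/2}$) yields the size bound $|x-y|^{-n}$. The $L^2$-boundedness follows from the spectral theorem together with the factorization $\Delta_\nu=\sum_j\delta_{\nu_j}^*\delta_{\nu_j}$, which gives $\|\delta_{\nu_j}f\|_2^2\le\|\Delta_\nu^{1/2}f\|_2^2$ and hence, by iterating and using the functional calculus, that $\delta_\nu^k\Delta_\nu^{-|k|/2}$ is bounded on $L^2$.

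For the regularity (Hölder) estimate I would differentiate the kernel in $y$ (and separately in $x$ by symmetry considerations coming from the adjoint relations between $\delta_{\nu_j}$ and $\delta_{\nu_j}^*$). The natural route is to bound $\partial_{y}\big(\delta_\nu^k p_t(x,y)\big)$, again factorwise, and combine this with the gradient estimate for the one-dimensional kernel; one then splits the time integral at $t\sim|x-y|^2$ and uses the mean value inequality on the range $|y-y'|\le|x-y|/2$. On the short-time piece one pays a factor $|y-y'|/\sqrt t$ and integrates, while on the long-time piece the boundary factor $\big(1+\sqrt t/x\big)^{\gamma_\nu}$ limits how many powers of $|y-y'|/|x-y|$ can be extracted, and this is precisely where the Hölder exponent $\nu_{\min}+1/2$ arises rather than a full Lipschitz bound. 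I expect the main obstacle to be exactly this near-boundary analysis: obtaining the one-dimensional derivative kernel estimates with the sharp weight $\big(1+\sqrt t/x_j\big)^{\gamma_{\nu_j}}$ uniformly for all $\nu_j\in(-1/2,\infty)$, including the delicate small-argument asymptotics of the Bessel functions $J_{\nu_j}$ when $\nu_j$ is close to $-1/2$, and then verifying that the product over coordinates with $\min_j$ in the exponent integrates to give exactly the claimed order $\nu_{\min}+1/2$.
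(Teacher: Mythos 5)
Your architecture for the kernel bounds (subordination formula $\Delta_\nu^{-|k|/2}=c_k\int_0^\infty t^{|k|/2}e^{-t\Delta_\nu}\,\frac{dt}{t}$, coordinatewise factorization of $\delta_\nu^k p_t^\nu$, one-dimensional Gaussian estimates with a boundary weight, integration in $t$) is the same as the paper's. But there are genuine gaps. First, your one-dimensional estimate has the weight with the wrong sign: the correct bound (Theorem \ref{thm-delta k pt nu}) is
\[
|\delta_{\nu_j}^{k_j}p_t^{\nu_j}(x_j,y_j)|\lesssim t^{-(1+k_j)/2}\Big(1+\frac{\sqrt t}{x_j}\Big)^{-(\nu_j+1/2)}\Big(1+\frac{\sqrt t}{y_j}\Big)^{-(\nu_j+1/2)}\exp\Big(-\frac{c|x_j-y_j|^2}{t}\Big),
\]
a \emph{decay} factor (the kernel vanishes at the boundary when $\nu_j>-1/2$). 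With your growth factor $(1+\sqrt t/x_j)^{+\gamma_{\nu_j}}$ the time integral does not close: for $\gamma_{\nu_j}$ large the contribution of $t>|x-y|^2$ diverges or produces a bound singular in $x_j$, so the size estimate $|x-y|^{-n}$ does not follow. Second, and most seriously, your $L^2$ argument fails beyond $|k|=1$. Since $\delta_{\nu_j}$ does not commute with $\Delta_\nu$, the identity $\|\delta_{\nu_j}f\|_2\le\|\Delta_\nu^{1/2}f\|_2$ cannot be ``iterated'': for $\delta_{\nu_1}^2\Delta_\nu^{-1}$ you would need $\|\Delta_\nu^{1/2}\delta_{\nu_1}f\|_2\lesssim\|\Delta_\nu f\|_2$, which the functional calculus does not give. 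The paper's $L^2$ proof is an induction on $|k|$ that uses the order-reduction identity $\delta_{\nu_1}^2=-\Delta_{\nu_1}+\frac{2\nu_1+1}{x_1}\delta_{\nu_1}$, a separate perturbation result (Theorem \ref{thm- boudnedness of Riesz nu large}, showing $\delta_\nu^k\Delta_\nu^{-|k|/2}-\delta_{\nu+\alpha}^k\Delta_{\nu+\alpha}^{-|k|/2}$ is $L^p$-bounded) to shift $\nu_1$ upward, and pointwise domination of the leftover kernels $x_1^{-j}\delta_\nu^{k-je_1}\Delta_\nu^{-|k|/2}(x,y)$ by the maximal function. None of this machinery appears in your proposal, and it is the hardest part of the theorem.

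For the H\"older estimate there is a further missing idea: the mean value theorem cannot be applied uniformly over $|y-y'|\le|x-y|/2$, because the $y$-derivative of $\delta_\nu^k p_t^\nu$ carries the extra factor $\frac{1}{\sqrt t}+\frac{1}{\rho(y)}$ (Proposition \ref{prop-gradient x y d>2}), and when $|y-y'|\gtrsim\rho(y)$ the resulting quantity $|y-y'|/\rho(y)$ is not controlled by $(|y-y'|/|x-y|)^{\nu_{\min}+1/2}$. The paper resolves this by a dichotomy on $|y-y'|$ versus $\rho(y)$: when $|y-y'|\ge\max\{\rho(y),\rho(y')\}$ it uses only the size bound together with the decay factor $(\rho(y)/|x-y|)^{\gamma_\nu}$, and only when $|y-y'|\lesssim\rho(y)$ does it invoke the mean value theorem, absorbing $|y-y'|/\rho(y)\le(|y-y'|/\rho(y))^{\gamma_\nu}$ into that same decay factor. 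Your proposed split of the time integral at $t\sim|x-y|^2$ does not substitute for this case analysis.
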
	
\begin{thm}\label{thm- boundedness on Hardy and BMO}
	Let $\nu\in (-1/2, \vc)^n$, $\gamma_\nu=\nu_{\min}+1/2$ and $k\in \mathbb N^n$, where $\nu_{\min}=\min\{\nu_j: j=1,\ldots, n\}$. Then for  $\f{n}{n+\gamma_\nu}<p\le 1$ and $s=n(1/p-1)$, we have
	\begin{enumerate}[{\rm (i)}]
		\item the Riesz transform $\delta^k_\nu \Delta_\nu^{-|k|/2}$ is bounded on $H^p_{\rho}(\mathbb{R}^n_+)$;
		\item the Riesz transform $\delta^k_\nu \Delta_\nu^{-|k|/2}$ is bounded on $BMO^s_{\rho}(\mathbb{R}^n_+)$.
	\end{enumerate} 
\end{thm}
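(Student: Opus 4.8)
The plan is to leverage Theorem~\ref{thm-Riesz transform}, which already establishes that $T:=\delta^k_\nu \Delta_\nu^{-|k|/2}$ is a Calder\'on--Zygmund operator whose kernel satisfies a smoothness estimate of order $\gamma_\nu = \nu_{\min}+1/2$. The heart of the argument is that for $\frac{n}{n+\gamma_\nu}<p\le 1$ we have $s=n(1/p-1)<\gamma_\nu$, so the available kernel regularity exceeds the number of vanishing moments $\lfloor s\rfloor$ that the atoms carry, which is exactly the condition one needs for a CZO to map atoms to suitable molecules. For part (i), the strategy is the standard \emph{atom-to-molecule} reduction: by the atomic characterization of $H^p_\rho(\mathbb{R}^n_+)$ (Definition and the surrounding discussion) and a density/limiting argument, it suffices to prove a uniform bound $\|Ta\|_{H^p_\rho}\lesssim 1$ for every $(p,\rho)$-atom $a$ supported on a ball $B=B(x_0,r)$.

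First I would split into the two regimes dictated by the critical function $\rho$. When $r<\rho(x_0)$, the atom has the full set of vanishing moments up to order $n(1/p-1)$, so $Ta$ behaves like a classical $H^p$ molecule: I would estimate $\|Ta\|_{L^p}^p$ by decomposing $\mathbb{R}^n_+$ into the dilated ball $2B$ and the annular regions $2^{k+1}B\setminus 2^kB$. On $2B$ one uses $L^2$-boundedness of $T$ together with Hölder's inequality and the size $|B|^{-1/p}$ of the atom; on the far annuli one uses the moment cancellation against the Taylor remainder of the kernel, gaining the decay $(r/2^k r)^{\gamma_\nu}$ from the Hölder-regularity estimate, and the sum over $k$ converges precisely because $s<\gamma_\nu$. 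This shows $Ta$ is (a fixed multiple of) an $H^p$-molecule and hence has controlled $H^p_\rho$ norm. When $r\ge\rho(x_0)$, no cancellation is assumed, but here the atom is a ``local'' object and one exploits that $\rho(x)\sim\rho(x_0)$ on $B$ together with the global size bound of the kernel; the off-diagonal decay $|x-y|^{-n}$ alone, integrated against the atom, suffices to place $Ta$ in $H^p_\rho$ without needing moments, after possibly subtracting an appropriate polynomial on the critical scale.

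For part (ii), I would proceed by duality. By Theorem~\ref{mainthm-dual}, $\big(H^p_{\Delta_\nu}(\mathbb{R}^n_+)\big)^\ast = BMO^{s}_\rho(\mathbb{R}^n_+)$ with $s=n(1/p-1)$, and by Theorem~\ref{mainthm2s} we may identify $H^p_{\Delta_\nu}$ with $H^p_\rho$. Since $T$ is an $L^2$-bounded CZO, its formal adjoint $T^\ast=\Delta_\nu^{-|k|/2}(\delta^k_\nu)^\ast$ is again a CZO of the same kernel-regularity type, so part (i) applies to $T^\ast$ and gives its boundedness on $H^p_\rho$. Boundedness of $T$ on $BMO^s_\rho$ then follows by taking adjoints: for $f\in BMO^s_\rho$ and $g\in H^p_\rho$ one writes $\langle Tf,g\rangle=\langle f,T^\ast g\rangle$ and bounds $|\langle f,T^\ast g\rangle|\le \|f\|_{BMO^s_\rho}\,\|T^\ast g\|_{H^p_\rho}\lesssim \|f\|_{BMO^s_\rho}\,\|g\|_{H^p_\rho}$.

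I expect the main obstacle to be the behavior near the boundary $x_j\to 0$, which is encoded in the critical function $\rho$ and the two-regime structure of both the atoms and the Campanato norm. Concretely, the delicate point in part (i) is the case $r\ge\rho(x_0)$, where no moment cancellation is available and one must instead use the local/global dichotomy of $\rho$ to control $\|Ta\|_{H^p_\rho}$; matching the polynomial $P^M_B$ used in the $BMO^{s,M}_\rho$ norm with the moment order of the atoms, and ensuring the adjoint $T^\ast$ genuinely inherits the same $\gamma_\nu$-regularity so that part (ii)'s duality argument is legitimate, is where the argument requires the most care.
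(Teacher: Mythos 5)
Your overall architecture (atomic estimates for (i), duality via Theorem \ref{mainthm-dual} for (ii)) matches the paper, but the core of part (i) has two genuine gaps. First, in the regime $r\ge\rho(x_0)$ your claim that ``the off-diagonal decay $|x-y|^{-n}$ alone, integrated against the atom, suffices'' fails: the size bound gives $|Ta(x)|\lesssim \|a\|_1|x-x_0|^{-n}$, and $\int_{(4B)^c}|x-x_0|^{-np}\,dx$ diverges for every $p\le 1$. What saves the day is the \emph{extra} decay
$\bigl(1+\tfrac{|x-y|}{\rho(x)}+\tfrac{|x-y|}{\rho(y)}\bigr)^{-\gamma_\nu}$,
which is derived inside the proof of Theorem \ref{thm-Riesz transform} (see \eqref{eq- R kernel 1}) but is \emph{not} part of the Calder\'on--Zygmund bounds recorded in its statement; your proposal, which uses only the stated CZ estimates, cannot close this case. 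Second, even when $r<\rho(x_0)$, establishing $\|Ta\|_{L^p}\lesssim 1$ is weaker than $\|Ta\|_{H^p_\rho}\lesssim 1$: the ``atom-to-molecule'' route needs a molecular characterization of $H^p_\rho$ (nowhere established in the paper) together with some verification that $Ta$ retains approximate cancellation, and a generic CZO satisfies neither --- CZ kernel bounds give $H^p\to L^p$, not $H^p\to H^p$, absent a $T^*$-cancellation hypothesis. The paper circumvents both issues by a different mechanism: it invokes the $(p,N)_{\Delta_\nu}$ atoms of \cite{SY} with the structure $a=\Delta_\nu^N b$, uses the equivalence $H^p_{\Delta_{\nu+k+2\vec M}}\equiv H^p_\rho$ from Theorem \ref{mainthm2s}, and bounds the maximal function $\sup_{t>0}|e^{-t\Delta_{\nu+k+2\vec M}}\delta^k_\nu\Delta_\nu^{M-|k|/2}b|$ directly, where Corollary \ref{cor1 d ge 2} supplies Gaussian bounds for the composed kernel and yields decay $|x-y|^{-(n+2M)}$ off the ball with $M>n(1/p-1)$ --- no molecular theory and no cancellation of $Ta$ is needed.

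Part (ii) is structurally the same as the paper (duality plus $H^p_\rho$-boundedness of the adjoint $\Delta_\nu^{-|k|/2}(\delta_\nu^*)^k$), and your worry about the boundary behavior is well placed; but since you propose to run part (i)'s argument on $T^*$, the same two gaps propagate. In the paper the adjoint is handled by testing against $\sup_{t>0}|e^{-t\Delta_\nu}\cdot|$ on genuine $(p,\rho)$-atoms, using the Taylor expansion of $\delta^k_\nu p^\nu_{t+u}(y,x)$ against the moment conditions when $r_B<\rho(x_B)$ and the $(\rho(x_B)/|x-x_B|)^{\gamma_\nu}$ decay when $r_B=\rho(x_B)$; you would need to supply that $\rho$-adapted decay explicitly to make your argument work.
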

	
	\bigskip
Although our approach is closely related to that in \cite{BD}, several new ideas and improvements are necessary due to fundamental differences in our setting. The techniques in \cite{BD} heavily rely on the discrete eigenvalues of the Laguerre operator and the fact that its eigenvectors form an orthonormal basis for $L^2(\Rn_+)$. However, these properties do not hold in our case, requiring alternative methods. For instance, in estimating the heat kernels, we must develop a direct approach rather than leveraging the special properties of the derivative operator $\delta_\nu$ acting on the semigroups, as was done in \cite{BD}. Furthermore, establishing the boundedness of the Riesz transform is significantly more challenging, as we cannot rely on specific structural properties of the eigenvalues and eigenfunctions that were instrumental in \cite{BD}. These distinctions necessitate a refined analytical framework to address the difficulties that arise in our setting. \\

Note that the restriction $\nu\in (-1/2,\vc)^n$ is essential to guarantee that the higher-order Riesz transforms are Calder\'on-Zygmund. The more general case $\nu\in (-1,\vc)^n$ will be investigated in the forthcoming paper \cite{B2}.\\
	
	\bigskip
	
	Throughout the paper, we always use $C$ and $c$ to denote positive constants that are independent of the main parameters involved but whose values may differ from line to line. We will write $A\lesi B$ if there is a universal constant $C$ so that $A\leq CB$ and $A\sim B$ if $A\lesi B$ and $B\lesi A$. For $a \in \mathbb{R}$, we denote the integer part of $a$ by $\lfloor a\rfloor$.  For a given ball $B$, unless specified otherwise, we shall use $x_B$ to denote the center and $r_B$ for the radius of the ball. We also denote $a\vee b =\max\{a,b\}$ and $a\wedge b=\min\{a,b\}$.
	
	In the whole paper, we will often use the following inequality without any explanation $e^{-x}\le c(\alpha) x^{-\alpha}$ for any $\alpha>0$ and $x>0$.

	\section{Some   kernel estimates}
	
	This section is devoted to establishing some kernel estimates related to the heat kernel of $\Delta_\nu$. These estimates play an essential role in proving our main results. We begin by providing an explicit formula for the heat kernel of $\Delta_\nu$.
	
	Let $\nu \in (-1,\vc)^n$. For each $j=1,\ldots, n$, denote
	\[
	\Delta_{\nu_j} :=  -\frac{\partial^2}{\partial x_j^2} + \frac{\nu_j^2 - 1/4}{x_j^2} 
	\] 
	on $C_c^\infty(\mathbb{R}_+)$ as the natural domain. It is easy to see that 
	\[
	\Delta_\nu =\sum_{j=1}^n \Delta_{\nu_j}.
	\]

	Let $p_t^\nu(x,y)$ be the kernel of $e^{-t\Delta_\nu}$ and let $p_t^{\nu_j}(x_j,y_j)$ be the kernel of $e^{-t\Delta_{\nu_j}}$ for each $j=1,\ldots, n$. Then we have
	\begin{equation}\label{eq- prod ptnu}
		p_t^\nu(x,y)=\prod_{j=1}^n p_t^{\nu_j}(x_j,y_j).
	\end{equation}
	For $\nu_j\ge -1/2$, $j=1,\ldots, n,$, the kernel of $e^{-t\Delta_{\nu_j}}$ is given by
	\begin{equation}
		\label{eq1-ptxy}
		p_t^{\nu_j}(x_j,y_j)=\f{ (x_jy_j)^{1/2}}{2t}\exp\Big(-\f{x_j^2+y_j^2}{4t}\Big)I_{\nu_j}\Big(\f{x_jy_j}{2t}\Big),
	\end{equation}
	where  $I_\alpha$ is the usual Bessel funtions of an imaginary argument defined by
	\[
	I_\alpha(z)=\sum_{k=0}^\vc \f{\Big(\f{z}{2}\Big)^{\alpha+2k}}{k! \Gamma(\alpha+k+1)}, \ \ \ \ \alpha >-1.
	\]
	See for example \cite{Dziu, NS}.

	Note that for each $j=1,\ldots, n$, we can rewrite the kernel $p_t^{\nu_j}(x_j,y_j)$ as follows
	\begin{equation}
		\label{eq2-ptxy}
		\begin{aligned}
			p_t^{\nu_j}(x_j,y_j)=\f{1}{\sqrt {2t}}\Big(\f{x_jy_j}{2t}\Big)^{\nu_j+1/2} \exp\Big(-\f{x_j^2+y_j^2}{4t}\Big)\Big(\f{x_jy_j}{2t}\Big)^{-\nu_j}I_{\nu_j}\Big(\f{x_jy_j}{2t}\Big).
		\end{aligned}
	\end{equation}

	The following  properties of the Bessel function $I_\alpha$  with $\alpha>-1/2$ are well-known and are taken from \cite{L}:
	\begin{equation}
		\label{eq1-Inu}
		I_\alpha(z)\sim z^\alpha, \ \ \ 0<z\le 1,
	\end{equation}
	\begin{equation}
		\label{eq2-Inu}
		I_\alpha(z)= \f{e^z}{\sqrt{2\pi z}}+S_\alpha(z),
	\end{equation}
	where
	\begin{equation}
		\label{eq3-Inu}
		|S_\alpha(z)|\le  Ce^zz^{-3/2}, \ \ z\ge 1,
	\end{equation}
	\begin{equation}
		\label{eq4-Inu}
		\f{d}{dz}(z^{-\alpha}I_\alpha(z))=z^{-\alpha}I_{\alpha+1}(z),
	\end{equation}
	
	\begin{equation}
		\label{eq5s-Inu}
		0< I_\alpha(z)-I_{\alpha+1}(z)<2(\alpha+1)\f{I_{\alpha+1}(z)}{z}, \ \ \ z>0.
	\end{equation}
	and
	\begin{equation}
		\label{eq6s-Inu}
		0< I_\alpha(z)-I_{\alpha+2}(z)=\f{2(\alpha+1)}{z}I_{\alpha+1}(z), \ \ \ z>0.
	\end{equation}
	In the case $n=1$, from \eqref{eq5s-Inu}, \eqref{eq6s-Inu} and \eqref{eq1-ptxy}, we have
	\begin{equation}
		\label{eq5-Inu}
		0< p_t^\alpha(x,y)-p_t^{\alpha+1}(x,y)<2(\alpha+1)\f{t}{xy}p_t^{\alpha+1}(x,y), \ \ \ z>0.
	\end{equation}
	and
	\begin{equation}
		\label{eq6-Inu}
		0< p_t^\alpha(x,y)-p_t^{\alpha+2}(x,y)=\f{2(\alpha+1)t}{xy}p_t^{\alpha+1}(x,y), \ \ \ z>0.
	\end{equation}
	
	\begin{rem}
		When $n=1$, from \eqref{eq5-Inu}  we imply directly that for $\nu>-1$ we have $p_t^{\nu+1}(x,y)\le p_t^{\nu}(x,y)$ for all $t>0$ and $x,y>0$. We will use this inequality frequently without any further explanation.
	\end{rem}
	Before coming the the kernel estimates, we need the following simple identities
		\begin{equation}\label{eq-formula for delta k xf}
		\delta^{k}_\nu \Big[xf(x)\Big]= k\delta^{k-1}_\nu  f(x) + x\delta^{k}_\nu  f(x)
	\end{equation}
	and
	\begin{equation}\label{eq- del nu and del nu + 1}
		\delta_\nu^{k}=\Big(\delta_{\nu+1}+\f{1}{x}\Big)^{k} =\delta_{\nu+1}^k +\f{k}{x}\delta^{k-1}_{\nu+1}.
	\end{equation}
	
	\subsection{The case $n=1$}	We first write 
	\[
	p_t^\nu(x,y)=\f{ 1}{\sqrt{2t}}\Big(\f{xy}{2t}\Big)^{\nu+1/2}\exp\Big(-\f{x^2+y^2}{4t}\Big)\Big(\f{xy}{2t}\Big)^{-\nu}I_{\nu}\Big(\f{xy}{2t}\Big).
	\]
	Hence, using \eqref{eq4-Inu}, it can be verified that
	\[
	\partial_x p_t^\nu(x,y) = \f{(\nu+1/2)}{x} p_t^\nu(x,y) -\f{x}{2t}p_t^\nu(x,y) +p_t^{\nu+1}(x,y),
	\]
	which implies
			\begin{align}
			\delta_\nu p_t^\nu(x,y) &= -\f{x}{2t}p_t^\nu(x,y) +\f{y}{2t}p_t^{\nu+1}(x,y) \label{eq1-delta p}\\
			&= -\f{x}{2t}\big[p_t^\nu(x,y)-p_t^{\nu+1}(x,y)\big] +\f{y-x}{2t}p_t^{\nu+1}(x,y). \label{eq2-delta p}
		\end{align}

	\begin{thm}\label{thm-heat kernel}
		Let $\nu>-1/2$. Then
		\[
		|p_t^\nu(x,y)|\lesi \f{1}{\sqrt t}\exp\Big(-\f{|x-y|^2}{ct}\Big)\Big(1+\f{\sqrt t}{x}\Big)^{-\nu-1/2}\Big(1+\f{\sqrt t}{y}\Big)^{-\nu-1/2}
		\]
		for all $t>0$ and $x,y\in (0,\vc)$.
	\end{thm}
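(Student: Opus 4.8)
The plan is to substitute the explicit formula \eqref{eq2-ptxy}, replace the Bessel factor $I_\nu$ by its elementary asymptotics \eqref{eq1-Inu}--\eqref{eq3-Inu}, and then split according to the size of the natural scaling variable $A := \f{xy}{2t}$. The first step is to rewrite the exponential via the identity $\f{x^2+y^2}{4t} = \f{(x-y)^2}{4t} + \f{xy}{2t}$, so that
\[
\exp\Big(-\f{x^2+y^2}{4t}\Big) = \exp\Big(-\f{(x-y)^2}{4t}\Big)\,e^{-A}.
\]
Collecting the powers of $A$ in \eqref{eq2-ptxy}, the kernel becomes $p_t^\nu(x,y) = \f{1}{\sqrt{2t}}A^{1/2}I_\nu(A)\exp\big(-\f{(x-y)^2}{4t}\big)e^{-A}$, which isolates the Gaussian factor we ultimately want and reduces everything to estimating the single scalar $A^{1/2}I_\nu(A)e^{-A}$ (recall $\nu+1/2>0$ since $\nu>-1/2$, so all the powers and boundary factors below are genuinely positive/decaying).

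Next I would treat the two ranges of $A$ separately. When $A\le 1$, the small-argument asymptotic \eqref{eq1-Inu} gives $A^{1/2}I_\nu(A)\sim A^{\nu+1/2}$ and $e^{-A}\sim 1$, so $p_t^\nu(x,y)\lesi \f{1}{\sqrt{2t}}A^{\nu+1/2}\exp\big(-\f{(x-y)^2}{4t}\big)$. When $A>1$, the large-argument expansion \eqref{eq2-Inu}--\eqref{eq3-Inu} yields $I_\nu(A)\lesi e^A A^{-1/2}$, whence $A^{1/2}I_\nu(A)e^{-A}\lesi 1$ and $p_t^\nu(x,y)\lesi \f{1}{\sqrt{2t}}\exp\big(-\f{(x-y)^2}{4t}\big)$.

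The remaining task is to absorb these into the claimed bound, i.e.\ to produce the two boundary factors $\big(1+\f{\sqrt t}{x}\big)^{-\nu-1/2}\big(1+\f{\sqrt t}{y}\big)^{-\nu-1/2} = \big(\f{x}{x+\sqrt t}\big)^{\nu+1/2}\big(\f{y}{y+\sqrt t}\big)^{\nu+1/2}$. I would first pass from $\exp\big(-\f{(x-y)^2}{4t}\big)$ to $\exp\big(-\f{(x-y)^2}{ct}\big)$ with any $c>4$, which frees a surplus factor $\exp\big(-\f{(x-y)^2}{c't}\big)$, $\f1{c'}=\f14-\f1c>0$. Using $A\big(\f{x+\sqrt t}{x}\big)\big(\f{y+\sqrt t}{y}\big)=\f{(x+\sqrt t)(y+\sqrt t)}{2t}$, matching the ratio in the regime $A\le 1$ reduces to showing $\big(\f{(x+\sqrt t)(y+\sqrt t)}{2t}\big)^{\nu+1/2}\lesi \exp\big(\f{(x-y)^2}{c't}\big)$, while in the regime $A>1$ (where, say, $y\ge\sqrt{xy}>\sqrt{2t}$, so the $y$-factor is $\lesi 1$) it reduces to $\big(1+\f{\sqrt t}{x}\big)^{\nu+1/2}\lesi \exp\big(\f{(x-y)^2}{c't}\big)$.

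The main obstacle — and the only place where the geometry really matters — is the mixed regime in which one variable sits near the boundary (say $x<\sqrt t$) while the other is comparatively large; there the boundary factor blows up like $\big(\f{\sqrt t}{x}\big)^{\nu+1/2}$ and must be defeated by the Gaussian. The key point is that the constraints force a quantitative separation of $x$ and $y$: if $A\le 1$ and $\f{x+y}{\sqrt t}$ is large then $(x-y)^2=(x+y)^2-4xy\gtrsim (x+y)^2\gtrsim t\big(\f{x+y}{\sqrt t}\big)^2$, while if $A>1$ and $x<\sqrt t$ then $y>\f{2t}{x}\ge 2x$ forces $(x-y)^2\gtrsim \f{t^2}{x^2}=t\big(\f{\sqrt t}{x}\big)^2$. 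In either case the surplus Gaussian contributes $\exp(c_1 s^2)$ with $s$ the offending ratio, which dominates the polynomial $s^{\nu+1/2}$ by the elementary inequality $e^{-\tau}\le c(\alpha)\tau^{-\alpha}$ already in use throughout the paper. Combining the two cases gives the stated estimate.
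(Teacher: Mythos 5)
Your proposal is correct and follows essentially the same route as the paper: both start from the explicit formula \eqref{eq1-ptxy}, split the exponential as $\f{x^2+y^2}{4t}=\f{(x-y)^2}{4t}+\f{xy}{2t}$, apply the small- and large-argument asymptotics \eqref{eq1-Inu}--\eqref{eq3-Inu} of $I_\nu$ according to whether $xy\le 2t$ or $xy>2t$, and then absorb the boundary factors by trading surplus Gaussian decay for powers via $e^{-\tau}\le c(\alpha)\tau^{-\alpha}$. The only difference is organizational (the paper subdivides the case $xy>2t$ into $x>2y$, $y>2x$, $x\sim y$, whereas you exploit the symmetric observation that $\max(x,y)>\sqrt{2t}$), and your write-up actually supplies more detail than the paper in the regime $xy\le 2t$.
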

	\begin{proof}
		\textbf{Case 1:  $xy\le 2t$.} Using \eqref{eq5-Inu},  \eqref{eq1-ptxy} and \eqref{eq1-Inu} we have 
		\[
		\begin{aligned}
			p_t^\nu(x,y) 
			&\lesi  \f{1}{\sqrt t}\Big(\f{xy}{t}\Big)^{\nu+1/2}\exp\Big(-\f{x^2+y^2}{4t}\Big)\\
			&\lesi \f{1}{\sqrt t}\exp\Big(-\f{|x-y|^2}{ct}\Big)\Big(1+\f{\sqrt t}{x}\Big)^{-\nu-1/2}\Big(1+\f{\sqrt t}{y}\Big)^{-\nu-1/2}.
		\end{aligned}
		\]
		
		\textbf{Case 2:  $xy> 2t$.} Using \eqref{eq5-Inu},  \eqref{eq2-Inu} and \eqref{eq3-Inu} we have
		\[
		\begin{aligned}
			p_t^\nu(x,y)&\lesi  \f{1}{\sqrt t}\exp\Big(-\f{|x-y|^2}{ct}\Big).
		\end{aligned}
		\] 
		
		If $x>2y$, then $|x-y|\sim x$. Consequently,
		\[
		\begin{aligned}
			p_t^\nu(x,y)&\lesi  \f{1}{\sqrt t}\exp\Big(-\f{|x-y|^2}{ct}\Big)\\
			&\lesi  \f{1}{\sqrt t}\Big(\f{\sqrt t}{x}\Big)^{2(\nu+1/2)}\exp\Big(-\f{|x-y|^2}{ct}\Big)\\
			&\lesi \f{1}{\sqrt t}\exp\Big(-\f{|x-y|^2}{ct}\Big)\Big(1+\f{\sqrt t}{x}\Big)^{-\nu-1/2}\Big(1+\f{\sqrt t}{y}\Big)^{-\nu-1/2}.
		\end{aligned}
		\]
		Similarly, if $y>2x$, we also have
		\[
		p_t^\nu(x,y)\lesi \f{1}{\sqrt t}\exp\Big(-\f{|x-y|^2}{ct}\Big)\Big(1+\f{\sqrt t}{x}\Big)^{-\nu-1/2}\Big(1+\f{\sqrt t}{y}\Big)^{-\nu-1/2}.
		\]
		In the remaining case if $y/2\le x\le 2y$, then $x\sim y\gtrsim \sqrt t$. It follows that 
		$$
		\Big(1+\f{\sqrt t}{x}\Big)^{-\nu-1/2}\Big(1+\f{\sqrt t}{y}\Big)^{-\nu-1/2}\sim 1.
		$$
		Hence,
		\[
		p_t^\nu(x,y)\lesi \f{1}{\sqrt t}\exp\Big(-\f{|x-y|^2}{ct}\Big)\Big(1+\f{\sqrt t}{x}\Big)^{-\nu-1/2}\Big(1+\f{\sqrt t}{y}\Big)^{-\nu-1/2}.
		\]
		This completes our proof.
	\end{proof}
	
	\begin{prop}\label{prop- delta k p nu first region}
		For $\ell\in \mathbb N$,
		\[
		| \delta^\ell_\nu p_t^\nu(x,y)| \lesi_{\nu,\ell} \f{1}{t^{(\ell+1)/2}}\exp\Big(-\f{|x-y|^2}{ct}\Big)\Big(1+\f{\sqrt t}{x}\Big)^{-\nu-1/2}\Big(1+\f{\sqrt t}{y}\Big)^{-\nu-1/2}
		\]
		for all $\nu>-1/2$, $t>0$ and $x<\sqrt t$ OR $x>2y$ OR $x<y/2$.
	\end{prop}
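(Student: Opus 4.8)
The plan is to reduce the bound to Theorem~\ref{thm-heat kernel} by producing an exact expansion of $\delta_\nu^\ell p_t^\nu$ as a finite sum of elementary terms $x^{e}y^{m}t^{-c}p_t^{\nu+m}$, estimating each such term by the Gaussian-with-decay bound of that theorem, and invoking the three special regions only at the very last step, in order to absorb the residual powers of $x/\sq t$ and $y/\sq t$ that the differentiations create.

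I would first establish the expansion. Applying \eqref{eq1-delta p} with $\nu$ replaced by $\nu+m$ gives $\delta_{\nu+m}p_t^{\nu+m}=-\f{x}{2t}p_t^{\nu+m}+\f{y}{2t}p_t^{\nu+m+1}$, and since $\delta_\nu=\delta_{\nu+m}+\f{m}{x}$ directly from the definition of $\delta_\nu$, one obtains
\[
\delta_\nu p_t^{\nu+m}=-\f{x}{2t}p_t^{\nu+m}+\f{y}{2t}p_t^{\nu+m+1}+\f{m}{x}p_t^{\nu+m}.
\]
Using the product rule $\delta_\nu(Mp)=(\partial_xM)p+M\delta_\nu p$ for a scalar factor $M=M(x)$, I would prove by induction on $\ell$ (the base case $\ell=0$ being trivial) that
\[
\delta_\nu^\ell p_t^\nu(x,y)=\sum c_{e,m}\,x^{e}y^{m}t^{-(e+m+\ell)/2}\,p_t^{\nu+m}(x,y),
\]
a finite sum over integers $0\le m\le \ell$ and $e\ge -m$, with numerical coefficients $c_{e,m}$. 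The power of $t$ is forced by homogeneity, so the whole content of the induction is the invariant $e+m\ge 0$. The one delicate point---and the main obstacle---is the index-preserving descendant: differentiating the monomial contributes $e\,x^{e-1}y^mt^{-c}p_t^{\nu+m}$, while the term $\f{m}{x}p_t^{\nu+m}$ above contributes $m\,x^{e-1}y^mt^{-c}p_t^{\nu+m}$, so their sum carries the coefficient $e+m$. Hence when $e+m=0$ this term, which would have violated the invariant (its new value being $-1$), cancels identically, and when $e+m\ge 1$ it survives with new value $(e+m)-1\ge 0$. The remaining two descendants, produced by $-\f{x}{2t}p_t^{\nu+m}$ and $\f{y}{2t}p_t^{\nu+m+1}$, each raise $e+m$ by one. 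Thus $e+m\ge 0$ propagates from $\ell=0$, and this cancellation is exactly what tames the $1/x$ singularities as $x\to 0$.

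To conclude, I would estimate each term separately. Applying Theorem~\ref{thm-heat kernel} with $\nu$ replaced by $\nu+m$ (valid as $\nu+m>-1/2$) and setting $u=x/\sq t$, $v=y/\sq t$, the powers of $t$ collapse to $t^{-(\ell+1)/2}$ and a single term is bounded by
\[
\f{1}{t^{(\ell+1)/2}}\exp\Big(-\f{|x-y|^2}{ct}\Big)\Big(1+\f{\sq t}{x}\Big)^{-\nu-1/2}\Big(1+\f{\sq t}{y}\Big)^{-\nu-1/2}\,R_uR_v,
\]
where $R_u=u^{e+m}/(u+1)^m$ and $R_v=v^{2m}/(v+1)^m$. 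Because $e+m\ge 0$, we have $R_u\lesi 1$ for $u\le1$ and $R_u\lesi u^{e}$ for $u\ge1$, and likewise $R_v\lesi1$ for $v\le1$ and $R_v\lesi v^{m}$ for $v\ge1$; so it only remains to absorb the leftover powers $u^{e}$, $v^{m}$ into the Gaussian. This is where the three regions enter: on $x<\sq t$ we have $u<1$, hence $R_u\lesi1$, and either $v\le2$ (so $R_v\lesi1$) or $v>2>2u$, which forces $x<y/2$ and $|x-y|\sim y$; on $x>2y$ we have $|x-y|\sim x\gtrsim y$; and on $x<y/2$ we have $|x-y|\sim y\gtrsim x$. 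In every case each surviving power $u^e$ or $v^m$ is dominated by a power of $|x-y|/\sq t$, which the Gaussian absorbs through $s^ke^{-s^2/c}\lesi e^{-s^2/c'}$. Summing the finitely many terms gives the stated estimate.
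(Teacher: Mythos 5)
Your proof is correct, and it is built from exactly the same ingredients as the paper's: the recursion \eqref{eq1-delta p}, the shift identity $\delta_\nu=\delta_{\nu+m}+\tfrac{m}{x}$, the product rule, induction on $\ell$, and the heat kernel bound of Theorem \ref{thm-heat kernel} applied at the shifted index $\nu+m$. The difference is organizational rather than substantive: the paper runs the induction directly on the \emph{estimate} (with the region hypotheses and the case analysis re-entering at every inductive step, and with the $1/x$ factors controlled by pairing them against the extra decay $(1+\sqrt t/x)^{-1}$ gained from $p_t^{\nu+1}$), whereas you run the induction on an \emph{exact algebraic identity} $\delta_\nu^\ell p_t^\nu=\sum c_{e,m}x^ey^mt^{-(e+m+\ell)/2}p_t^{\nu+m}$ governed by the invariant $e+m\ge 0$, and defer all analysis to a single final estimation. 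The cancellation you isolate (the $(e-1,m)$ descendant carrying coefficient $e+m$, hence vanishing exactly when the invariant would be violated) is the structural reason the $x^{-1}$ singularities do not accumulate; in the final step your factor $R_u=u^{e+m}/(u+1)^m$ combines that cancellation with the same extra decay from $p_t^{\nu+m}$ that the paper uses. Your version buys a cleaner separation of algebra from analysis and makes the role of the three regions transparent (they are needed only to absorb the residual nonnegative powers $u^{e_+}$, $v^m$ into the Gaussian); the paper's version avoids tracking a two-parameter family of coefficients and reuses the same inductive template in its companion result for the region $x\sim y\gtrsim\sqrt t$ (Proposition \ref{prop-delta k pt  2nd region}), where a closed-form expansion would be less convenient. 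I checked the delicate points of your argument: the $t$-exponent bookkeeping is consistent, $m$ stays in $[0,\ell]$ so Theorem \ref{thm-heat kernel} applies at $\nu+m>-1/2$, and in each of the three regions every surviving power of $u$ or $v$ is indeed dominated by a power of $|x-y|/\sqrt t$.
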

	\begin{proof}
		We will prove by induction.
		
		The estimate holds true for $\ell=0$ due to Theorem \ref{thm-heat kernel}. Assume that the estimate holds true for $\ell=0,1,\ldots, k$ for some $k\ge 0$, i.e., for $\ell=0,1,\ldots, k$,
		\begin{equation}
			\label{eq-inductive hypo 1st region}
			| \delta^\ell_\nu p_t^\nu(x,y)| \lesi \f{1}{t^{(\ell+1)/2}}\exp\Big(-\f{|x-y|^2}{ct}\Big)\Big(1+\f{\sqrt t}{x}\Big)^{-\nu-1/2}\Big(1+\f{\sqrt t}{y}\Big)^{-\nu-1/2}
		\end{equation}
			for all $\nu>-1/2$, $t>0$ and $x<\sqrt t$ or $x>2y$ or $x<y/2$.
			
		We need to prove the estimate for $\ell = k +1$  and $\nu>-1/2$. From \eqref{eq1-delta p} we have
		\[
		|\delta^{k+1}_\nu p_t^\nu(x,y)|\lesi  \Big|\delta_\nu^k\Big[\f{x}{2t}p_t^\nu(x,y)\Big]\Big| + \f{y}{2t}  |\delta_\nu^k p_t^{\nu+1}(x,y)|:=E_{1}+E_{2}.
		\]
		Applying \eqref{eq-formula for delta k xf},
		\[
		\begin{aligned}
			E_{1}&\lesi \f{x}{t}|\delta_\nu^kp_t^\nu(x,y)| + \f{1}{t}|\delta_\nu^{k-1}p_t^\nu(x,y)|.
		\end{aligned}
		\]
		By \eqref{eq-inductive hypo 1st region},
		\[
		\f{1}{t}|\delta_\nu^{k-1}p_t^\nu(x,y)|\lesi \f{1}{t^{(k+2)/2}}\exp\Big(-\f{|x-y|^2}{ct}\Big)\Big(1+\f{\sqrt t}{x}\Big)^{-\nu-1/2}\Big(1+\f{\sqrt t}{y}\Big)^{-\nu-1/2}
		\]
		for all $t>0$ and $x<\sqrt t$ or $x>2y$ or $x<y/2$.
		
		Similarly, if $x<\sqrt t$,
		\[
		\f{x}{t}|\delta_\nu^kp_t^\nu(x,y)|\lesi \f{1}{t^{(k+2)/2}}\exp\Big(-\f{|x-y|^2}{ct}\Big)\Big(1+\f{\sqrt t}{x}\Big)^{-\nu-1/2}\Big(1+\f{\sqrt t}{y}\Big)^{-\nu-1/2}.		
		\]
		If $x\ge 2y$ or $x\le y/2$, then $x \lesi |x-y|$. Therefore, by \eqref{eq-inductive hypo 1st region},
		\[
		\begin{aligned}
			\f{x}{t}|\delta_\nu^kp_t^\nu(x,y)|&\lesi \f{x}{t} \f{1}{t^{(k+1)/2}}\exp\Big(-\f{|x-y|^2}{ct}\Big)\Big(1+\f{\sqrt t}{x}\Big)^{-\nu-1/2}\Big(1+\f{\sqrt t}{y}\Big)^{-\nu-1/2}\\
			&\lesi \f{x}{t} \f{\sqrt t}{|x-y|}\f{1}{t^{(k+1)/2}}\exp\Big(-\f{|x-y|^2}{2ct}\Big)\Big(1+\f{\sqrt t}{x}\Big)^{-\nu-1/2}\Big(1+\f{\sqrt t}{y}\Big)^{-\nu-1/2}\\
			&\lesi \f{1}{t^{(k+2)/2}}\exp\Big(-\f{|x-y|^2}{ct}\Big)\Big(1+\f{\sqrt t}{x}\Big)^{-\nu-1/2}\Big(1+\f{\sqrt t}{y}\Big)^{-\nu-1/2},
		\end{aligned}
		\]
		as long as $x\ge 2y$ or $x\le y/2$.
		
		Hence, 
		\[
		E_1\lesi \f{1}{t^{(k+2)/2}}\exp\Big(-\f{|x-y|^2}{ct}\Big)\Big(1+\f{\sqrt t}{x}\Big)^{-\nu-1/2}\Big(1+\f{\sqrt t}{y}\Big)^{-\nu-1/2}
		\]
				for all $t>0$ and $x<\sqrt t$ or $x>2y$ or $x<y/2$.
				
		For the term $E_2$, using \ref{eq- del nu and del nu + 1} to obtain
		\[
		\begin{aligned}
			E_2 &\lesi \f{y}{t}|\delta_{\nu+1}^k p_t^{\nu+1}(x,y)| + \f{y}{tx}|\delta_{\nu+1}^{k-1} p_t^{\nu+1}(x,y)|.
		\end{aligned}
		\]
		Similarly to the estimate of $E_1$, we have
		\[
		\begin{aligned}
			\f{y}{t}|\delta_{\nu+1}^k p_t^{\nu+1}(x,y)|&\lesi \f{|y-x|}{t}|\delta_{\nu+1}^k p_t^{\nu+1}(x,y)|+\f{x}{t}|\delta_{\nu+1}^k p_t^{\nu+1}(x,y)|\\
			&\lesi \f{1}{t^{(k+2)/2}}\exp\Big(-\f{|x-y|^2}{ct}\Big)\Big(1+\f{\sqrt t}{x}\Big)^{-\nu-1/2}\Big(1+\f{\sqrt t}{y}\Big)^{-\nu-1/2}
		\end{aligned}
		\]
					for all $t>0$ and $x<\sqrt t$ or $x>2y$ or $x<y/2$.
					
		For the remaining term,
		\[
		\begin{aligned}
			\f{y}{tx}|\delta_{\nu+1}^{k-1} p_t^{\nu+1}(x,y)|&\lesi \f{|y-x|}{tx}|\delta_{\nu+1}^{k-1} p_t^{\nu+1}(x,y)|+ \f{1}{t}|\delta_{\nu+1}^{k-1} p_t^{\nu+1}(x,y)|.
		\end{aligned}
		\]
		By \eqref{eq-inductive hypo 1st region} we have
		\[
		\f{1}{t}|\delta_{\nu+1}^{k-1} p_t^{\nu+1}(x,y)|\lesi \f{1}{t^{(k+2)/2}}\exp\Big(-\f{|x-y|^2}{ct}\Big)\Big(1+\f{\sqrt t}{x}\Big)^{-\nu-1/2}\Big(1+\f{\sqrt t}{y}\Big)^{-\nu-1/2}
		\]
		and
		\[
		\begin{aligned}
			\f{|y-x|}{tx}|\delta_{\nu+1}^{k-1} p_t^{\nu+1}(x,y)|&\lesi \f{|y-x|}{tx}\f{1}{t^{k/2}}\exp\Big(-\f{|x-y|^2}{ct}\Big)\Big(1+\f{\sqrt t}{x}\Big)^{-\nu-3/2}\Big(1+\f{\sqrt t}{y}\Big)^{-\nu-3/2}\\
			&\lesi \f{1}{\sqrt t x}\Big(1+\f{\sqrt t}{x}\Big)^{-1}\f{1}{t^{k/2}}\exp\Big(-\f{|x-y|^2}{2ct}\Big)\Big(1+\f{\sqrt t}{x}\Big)^{-\nu-1/2}\Big(1+\f{\sqrt t}{y}\Big)^{-\nu-1/2}\\
			&\lesi  \f{1}{t^{(k+2)/2}}\exp\Big(-\f{|x-y|^2}{2ct}\Big)\Big(1+\f{\sqrt t}{x}\Big)^{-\nu-1/2}\Big(1+\f{\sqrt t}{y}\Big)^{-\nu-1/2}
		\end{aligned}
		\]
							for all $t>0$ and $x<\sqrt t$ or $x>2y$ or $x<y/2$.

		This completes our proof.
	\end{proof}

	\begin{prop}\label{prop-delta k pt  2nd region}
For $\ell\in \mathbb N\backslash\{0\}$, we have
	\[
	| \delta^\ell_\nu p_t^\nu(x,y)|+ \f{x}{t}|\delta^{\ell-1}_\nu[p_t^\nu(x,y)-p_t^{\nu+1}(x,y)]|\lesi_{\nu,\ell} \f{1}{t^{(\ell+1)/2}}\exp\Big(-\f{|x-y|^2}{ct}\Big)\Big(1+\f{\sqrt t}{x}\Big)^{-\nu-1/2}\Big(1+\f{\sqrt t}{y}\Big)^{-\nu-1/2}
	\]
	for all $\nu>-1/2$, $t>0$ and $x\sim y\gtrsim \sqrt t$.
\end{prop}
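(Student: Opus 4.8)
The plan is to argue by induction on $\ell$ (for every $\nu>-1/2$), bounding the two summands simultaneously. Throughout I use that in the region $x\sim y\gtrsim \sq t$ one has $\xt^{-\nu-1/2}\yt^{-\nu-1/2}\sim 1$, so the target reduces to $t^{-(\ell+1)/2}\exp(-|x-y|^2/ct)$. The base case $\ell=1$ is immediate: for the difference term \eqref{eq5-Inu} gives $\f{x}{t}|p_t^\nu-p_t^{\nu+1}|\lesi \f1y\,p_t^{\nu+1}\lesi t^{-1}\exp(-|x-y|^2/ct)$ since $y\gtrsim\sq t$, and then \eqref{eq2-delta p} together with the elementary gain $s e^{-s^2}\lesi e^{-s^2/2}$ controls $|\delta_\nu p_t^\nu|$.

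For the inductive step I would first dispose of the genuine heat-kernel term $|\delta_\nu^{k+1}p_t^\nu|$. Writing $\delta_\nu^{k+1}p_t^\nu=\delta_\nu^k[\delta_\nu p_t^\nu]$, inserting \eqref{eq2-delta p}, expanding with \eqref{eq-formula for delta k xf}, and converting every $\delta_\nu$ falling on $p_t^{\nu+1}$ into $\delta_{\nu+1}$ via \eqref{eq- del nu and del nu + 1}, all resulting terms are controlled by the induction hypothesis and the Gaussian gain $se^{-s^2}\lesi e^{-s^2/2}$, with one exception: the term $\f{x}{2t}\delta_\nu^k(p_t^\nu-p_t^{\nu+1})$. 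In other words the first summand is bounded by one half of the difference summand at level $k+1$ plus admissible errors, so everything hinges on the weighted difference $\f{x}{t}|\delta_\nu^k(p_t^\nu-p_t^{\nu+1})|$.

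This difference is the crux, and it cannot be closed by the recurrences alone: the identities \eqref{eq1-delta p} and \eqref{eq2-delta p} are algebraically equivalent and only relate the two summands at the same level with reciprocal constants (roughly $A_{k+1}\lesi \tfrac12 B_{k+1}$ against $B_{k+1}\lesi 2\tfrac{x}{y}A_{k+1}$), so no bootstrap is possible; and expanding $\delta_\nu^k(p_t^\nu-p_t^{\nu+1})$ term by term produces coefficients of size $(x/\sq t)^{k-1}$ that dwarf the target and must cancel. To extract the cancellation I would use the exact second-order recurrence \eqref{eq6-Inu}, $p_t^\nu-p_t^{\nu+2}=\f{2(\nu+1)t}{xy}p_t^{\nu+1}$, together with the commutation rule $\delta_\nu^k[x^{-1}f]=x^{-1}\delta_{\nu+1}^k f$ (which follows directly from the definition of $\delta_\nu$). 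These give the clean identity $\delta_\nu^k(p_t^\nu-p_t^{\nu+2})=\f{2(\nu+1)t}{xy}\,\delta_{\nu+1}^k p_t^{\nu+1}$; by the induction hypothesis at parameter $\nu+1$ this is $\lesi \f{t}{xy}\,t^{-(k+1)/2}\exp(-|x-y|^2/ct)$, whence $\f{x}{t}|\delta_\nu^k(p_t^\nu-p_t^{\nu+2})|\lesi \f1y\,t^{-(k+1)/2}\exp(-\cdots)$, admissible because $y\gtrsim\sq t$.

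It remains to pass from the gap-two difference to the gap-one difference, and this residual is the main obstacle: reducing gap-one to gap-two leaves a further gap-one difference at the shifted index $\nu+1$, and iterating reintroduces exactly the non-summable loss of a factor $x/\sq t$ per step. I would therefore treat the residual directly, splitting on $w:=xy/2t$. For $w\le 1$ (which forces $x\sim y\sim\sq t$, so $x/\sq t\sim 1$) the crude term-by-term expansion is already admissible and \eqref{eq1-Inu} suffices. For $w>1$ I would insert \eqref{eq2-Inu}, $I_\mu(w)=e^w/\sq{2\pi w}+S_\mu(w)$: the leading profile is index-independent, so the index-independent part of $p_t^\mu$ is the free heat kernel $G=\f1{2\sq{\pi t}}\exp(-(x-y)^2/4t)$, which cancels in $p_t^\nu-p_t^{\nu+1}$ before any differentiation. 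Expanding $\delta_\nu^k p_t^\mu$ through \eqref{eq1-delta p} and \eqref{eq4-Inu} (each derivative only raises the Bessel index), $\delta_\nu^k p_t^\mu$ becomes an explicit combination of $\delta_\nu^i G$ with coefficients polynomial in $\mu$, plus a remainder carrying the factor $w^{-1}\sim t/xy$ from \eqref{eq3-Inu}; subtracting the values at $\mu=\nu$ and $\mu=\nu+1$ replaces these polynomial coefficients by their first finite difference in $\mu$, which lowers the degree and supplies one extra factor $x^{-1}\lesi t^{-1/2}$, yielding $\f{x}{t}|\delta_\nu^k(p_t^\nu-p_t^{\nu+1})|\lesi t^{-(k+2)/2}\exp(-|x-y|^2/ct)$. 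The hardest bookkeeping will be controlling the $\mu$-dependence of these coefficients and of the remainder uniformly enough that the finite difference in $\mu$ genuinely produces the gain; once that is secured, combining it with the reduction of the second paragraph closes the induction.
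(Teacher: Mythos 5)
Your overall architecture matches the paper's: induct on $\ell$ simultaneously in both summands and uniformly in $\nu>-1/2$ (so that the hypothesis at the shifted index $\nu+1$ is available), reduce the derivative term $|\delta_\nu^{k+1}p_t^\nu|$ via \eqref{eq2-delta p} and \eqref{eq-formula for delta k xf} to the weighted difference $\f{x}{t}|\delta_\nu^k(p_t^\nu-p_t^{\nu+1})|$, and exploit the exact gap-two identity \eqref{eq6-Inu}. Your observation that $\delta_\nu^k\big[x^{-1}f\big]=x^{-1}\delta_{\nu+1}^k f$ gives the clean formula $\delta_\nu^k(p_t^\nu-p_t^{\nu+2})=\f{2(\nu+1)t}{xy}\delta_{\nu+1}^kp_t^{\nu+1}$ is correct and is essentially how the paper exploits \eqref{eq6-Inu}. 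But you stop one identity short of closing the argument, and the fallback you propose for the residual gap-one difference is a genuine gap: the asymptotic expansion \eqref{eq2-Inu}--\eqref{eq3-Inu} only bounds $|S_\mu(z)|$ individually; nothing in the available estimates controls the finite difference $S_\nu(z)-S_{\nu+1}(z)$, nor the derivatives of $S_\mu$, so the claimed extra factor $x^{-1}$ from "differencing the coefficients in $\mu$" is unsupported. You flag this yourself as unresolved bookkeeping, but it is in fact the entire difficulty, and the expansion route would require new uniform-in-$\mu$ asymptotics not present in the paper.

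The step you are missing is elementary: subtract \eqref{eq2-delta p} written at index $\nu$ from the same formula written at index $\nu+1$ and apply \eqref{eq6-Inu} to the resulting gap-two difference. This yields
\[
\delta_\nu p_t^\nu(x,y)-\delta_{\nu+1}p_t^{\nu+1}(x,y)=\f{2(\nu+1)}{y}\,p_t^{\nu+1}(x,y)+\f{y-x}{2t}\big[p_t^{\nu+1}(x,y)-p_t^{\nu+2}(x,y)\big],
\]
i.e.\ the dangerous prefactor $\f{x}{2t}$ only ever multiplies the gap-two difference (which is exactly $\f{2(\nu+1)t}{xy}p_t^{\nu+1}$ and hence harmless), while the surviving gap-one difference carries the prefactor $\f{y-x}{2t}$, which is absorbed by the Gaussian after writing $\f{|y-x|}{t}=\f{1}{\sqrt t}\cdot\f{|y-x|}{\sqrt t}$. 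One then decomposes
\[
\delta_\nu^k\big[p_t^\nu-p_t^{\nu+1}\big]=\delta_\nu^{k-1}\big[\delta_\nu p_t^\nu-\delta_{\nu+1}p_t^{\nu+1}\big]-\delta_\nu^{k-1}\Big[\f{1}{x}\,p_t^{\nu+1}\Big],
\]
and every resulting term is controlled by the induction hypothesis at indices $\nu$ and $\nu+1$ together with Theorem \ref{thm-heat kernel}; no asymptotic analysis of $I_\mu$ beyond \eqref{eq5-Inu}, \eqref{eq6-Inu} is needed. Without this identity your induction does not close.
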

\begin{proof}
	We will prove the proposition by induction. 
	
	\noindent $\bullet$ We first prove the estimate for $\ell=1$. Obviously, by \eqref{eq5-Inu}, \eqref{eq1-ptxy} and Theorem \ref{thm-heat kernel},
	\[
	\begin{aligned}
		\f{x}{t}| p_t^\nu(x,y)-p_t^{\nu+1}(x,y)| &\lesi \f{x}{t}\f{2t}{xy}p_t^{\nu+1}(x,y)\\
		&\lesi \f{1}{y} \f{1}{\sqrt t}\exp\Big(-\f{|x-y|^2}{ct}\Big)\Big(1+\f{\sqrt t}{x}\Big)^{-\nu-1/2}\Big(1+\f{\sqrt t}{y}\Big)^{-\nu-1/2}\\
		&\lesi  \f{1}{t}\exp\Big(-\f{|x-y|^2}{ct}\Big)\Big(1+\f{\sqrt t}{x}\Big)^{-\nu-1/2}\Big(1+\f{\sqrt t}{y}\Big)^{-\nu-1/2}\\
	\end{aligned}
	\]
	for all $\nu>-1/2$, $t>0$ and $x\sim y\gtrsim \sqrt t$.
	
	We now estimate  $\delta_\nu p_t^\nu(x,y)$. For  $x\sim y\gtrsim\sqrt t$, using  \eqref{eq2-delta p}, 
	\[
	\delta_\nu p_t^\nu(x,y)  = -\f{x}{2t}\big[p_t^\nu(x,y)-p_t^{\nu+1}(x,y)\big] +\f{y-x}{2t}p_t^{\nu+1}(x,y).
	\]
	In this case, $xy>t$. Hence, applying \eqref{eq5-Inu}, \eqref{eq1-ptxy} and Theorem \ref{thm-heat kernel} we obtain
	\[
	\begin{aligned}
		|\sqrt t\delta_\nu p_t^\nu(x,y)|&\lesi \f{\sqrt t}{y}p_t^{\nu+1}(x,y) + \f{|y-x|}{\sqrt t}p_t^{\nu+1}(x,y)\\
		&\lesi \f{1}{\sqrt t}\exp\Big(-\f{|x-y|^2}{ct}\Big)\Big(1+\f{\sqrt t}{x}\Big)^{-\nu-1/2}\Big(1+\f{\sqrt t}{y}\Big)^{-\nu-1/2}
	\end{aligned}
	\] 
		for all $t>0$ and $x\sim y\gtrsim \sqrt t$.
	\bigskip
	
	$\bullet$ Assume that the estimate is true for $\ell=1,\ldots, k$ for some $k\ge 1$, i.e., for $\ell=1,\ldots, k$ we have 	\begin{equation}\label{eq- inductive hypo delta k pt nu}
	| \delta^\ell_\nu p_t^\nu(x,y)|+ \f{x}{t}|\delta^{\ell-1}_\nu[p_t^\nu(x,y)-p_t^{\nu+1}(x,y)]|\lesi \f{1}{t^{(\ell+1)/2}}\exp\Big(-\f{|x-y|^2}{ct}\Big)\Big(1+\f{\sqrt t}{x}\Big)^{-\nu-1/2}\Big(1+\f{\sqrt t}{y}\Big)^{-\nu-1/2}
	\end{equation}
	for all $\nu>-1/2$, $t>0$ and $x\sim y\gtrsim \sqrt t$.
	
	 We need to prove the estimate for $\ell=k+1$ and $\nu>-1/2$. We first have
	\[
	\begin{aligned}
		\delta_\nu^k[p_t^\nu(x,y)-p_t^{\nu+1}(x,y)]&=\delta_\nu^{k-1}\big[\delta_\nu p_t^\nu(x,y)-\delta_{\nu+1}p_t^{\nu+1}(x,y)-\f{1}{x}p_t^{\nu+1}(x,y)]\big]\\
		&=\delta_\nu^{k-1}\big[\delta_\nu p_t^\nu(x,y)-\delta_{\nu+1}p_t^{\nu+1}(x,y)\big] -\delta_\nu^{k-1}\Big[\f{1}{x}p_t^{\nu+1}(x,y)\Big],
	\end{aligned}
	\]
	which implies
		\[
	\begin{aligned}
		\f{x}{t}|\delta_\nu^k[p_t^\nu(x,y)-p_t^{\nu+1}(x,y)]|&\lesi \f{x}{t}\Big|\delta_\nu^{k-1}\big[\delta_\nu p_t^\nu(x,y)-\delta_{\nu+1}p_t^{\nu+1}(x,y)\big]\Big| +\f{x}{t}\Big|\delta_\nu^{k-1}\Big[\f{1}{x}p_t^{\nu+1}(x,y)\Big]\Big|\\
		&=E_1+E_2.
	\end{aligned}
	\]
	Using \eqref{eq- del nu and del nu + 1},
	\[
	\begin{aligned}
		E_2&\lesi \f{x}{t}\Big|\delta_{\nu+1}^{k-1}\Big[\f{1}{x}p_t^{\nu+1}(x,y)\Big]\Big| +\f{1}{t}\Big|\delta_{\nu+1}^{k-2}\Big[\f{1}{x}p_t^{\nu+1}(x,y)\Big]\Big|\\
		&\lesi \f{x}{t}\sum_{j=1}^{k-1} \f{1}{x^j} |\delta_{\nu+1}^{k-j}p_t^{\nu+1}(x,y)|+\f{1}{t}\sum_{j=1}^{k-2} \f{1}{x^j} |\delta_{\nu+1}^{k-j}p_t^{\nu+1}(x,y)|.
	\end{aligned}
	\]	
	Using \eqref{eq- inductive hypo delta k pt nu} we obtain
	\[
	E_2\lesi \f{1}{t^{(k+2)/2}}\exp\Big(-\f{|x-y|^2}{ct}\Big)\Big(1+\f{\sqrt t}{x}\Big)^{-\nu-1/2}\Big(1+\f{\sqrt t}{y}\Big)^{-\nu-1/2}
	\]
	for $t>0$ and  $x\sim y\gtrsim \sqrt t$.
	
	We now take care of $E_1$. Using \eqref{eq2-delta p} and \eqref{eq6-Inu},
	\[
	\begin{aligned}
		\delta_\nu p_t^\nu(x,y)-\delta_{\nu+1}p_t^{\nu+1}(x,y)&=\f{x}{2t}[p_t^\nu(x,y)-p_t^{\nu+2}(x,y)] +\f{y-x}{2t}[p_t^{\nu+1}(x,y)-p_t^{\nu+2}(x,y)]\\
		&=\f{2(\nu+1)}{y}p_t^{\nu+1}(x,y) +\f{y-x}{2t}[p_t^{\nu+1}(x,y)-p_t^{\nu+2}(x,y)],
	\end{aligned}
	\]
	which implies that 
	\[
	\begin{aligned}
		E_1
		&\lesi \f{x}{ty}|\delta_\nu^{k-1}p_t^{\nu+1}(x,y)| +\f{x}{t}\Big|\delta_\nu^{k-1}\Big[\f{y-x}{2t}(p_t^{\nu+1}(x,y)-p_t^{\nu+2}(x,y))\Big]\Big|\\
		&=E_{11}+E_{12}.
	\end{aligned}
	\]
	By \eqref{eq- del nu and del nu + 1} and \eqref{eq- inductive hypo delta k pt nu}, we have
	\[
	\begin{aligned}
		E_{11}&\lesi  \f{x}{ty}|\delta_{\nu+1}^{k-1}p_t^{\nu+1}(x,y)|+\f{1}{ty}|\delta_{\nu+1}^{k-2}p_t^{\nu+1}(x,y)|\\
		&\lesi \f{1}{t^{(k+2)/2}}\exp\Big(-\f{|x-y|^2}{ct}\Big)\Big(1+\f{\sqrt t}{x}\Big)^{-\nu-1/2}\Big(1+\f{\sqrt t}{y}\Big)^{-\nu-1/2}
	\end{aligned}
	\]
	for $t>0$ and $x\sim y\gtrsim \sqrt t$.
	
	By \eqref{eq- del nu and del nu + 1},
	\[
	\begin{aligned}
		|E_{12}| &\lesi \f{x}{t}\Big|\delta_{\nu+1}^{k-1}\Big[\f{y-x}{2t}(p_t^{\nu+1}(x,y)-p_t^{\nu+2}(x,y))\Big]\Big|+\f{1}{t}\Big|\delta_{\nu+1}^{k-2}\Big[\f{y-x}{2t}(p_t^{\nu+1}(x,y)-p_t^{\nu+2}(x,y))\Big]\Big|\\
		&=:E_{121}+E_{122}.
	\end{aligned}
	\]
	Applying \eqref{eq-formula for delta k xf} and \eqref{eq- inductive hypo delta k pt nu},
	\[
	\begin{aligned}
		E_{121}&\lesi \f{x}{t}\f{|y-x|}{2t}\Big|\delta_{\nu+1}^{k-1}(p_t^{\nu+1}(x,y)-p_t^{\nu+2}(x,y))\Big|+\f{x}{t^2} \Big|\delta_{\nu+1}^{k-2}(p_t^{\nu+1}(x,y)-p_t^{\nu+2}(x,y))\Big|\\
		&\lesi \f{1}{t^{(k+2)/2}}\exp\Big(-\f{|x-y|^2}{ct}\Big)\Big(1+\f{\sqrt t}{x}\Big)^{-\nu-1/2}\Big(1+\f{\sqrt t}{y}\Big)^{-\nu-1/2}
		\end{aligned}
		\]
		for $t>0$ and $x\sim y\gtrsim \sqrt t$.
		
		Similarly,
		\[
		\begin{aligned}
			E_{122}		&\lesi \f{1}{t^{(k+2)/2}}\exp\Big(-\f{|x-y|^2}{ct}\Big)\Big(1+\f{\sqrt t}{x}\Big)^{-\nu-1/2}\Big(1+\f{\sqrt t}{y}\Big)^{-\nu-1/2}
		\end{aligned}
		\]
		for $t>0$ and $x\sim y\gtrsim \sqrt t$.
	
	Therefore, we have proved that 
		\begin{equation}\label{eq-1st estimate}
		\f{x}{t}|\delta^{k}_\nu[p_t^\nu(x,y)-p_t^{\nu+1}(x,y)]|\lesi \f{1}{t^{(k+2)/2}}\exp\Big(-\f{|x-y|^2}{ct}\Big)\Big(1+\f{\sqrt t}{x}\Big)^{-\nu-1/2}\Big(1+\f{\sqrt t}{y}\Big)^{-\nu-1/2}
		\end{equation}
				for $t>0$ and $x\sim y\gtrsim \sqrt t$.
				
	We now turn to estimate $\delta^{k+1}_\nu p_t^\nu(x,y)$. To do this,  from \eqref{eq2-delta p}, \eqref{eq-formula for delta k xf} and \eqref{eq- del nu and del nu + 1}, 
	\[
	\begin{aligned}
		|\delta^{k+1}_\nu p_t^\nu(x,y)|&= |\delta^{k}_\nu [\delta_\nu p_t^\nu(x,y)]|\\
		&\lesi \f{1}{t}|\delta_\nu^{k-1}[p_t^\nu(x,y)-p_t^{\nu+1}(x,y)]| +\f{x}{t}|\delta_\nu^{k}[p_t^\nu(x,y)-p_t^{\nu+1}(x,y)]| +\Big|\delta^{k}_\nu\Big[\f{y-x}{2t}p_t^{\nu+1}(x,y)\Big]\Big|. 
	\end{aligned}
	\]
	By using \eqref{eq- inductive hypo delta k pt nu} and \eqref{eq-1st estimate}, we have
	\[
	\begin{aligned}
		\f{1}{t}|\delta_\nu^{k-1}[p_t^\nu(x,y)-p_t^{\nu+1}(x,y)]| &+\f{x}{t}|\delta_\nu^{k}[p_t^\nu(x,y)-p_t^{\nu+1}(x,y)]|\\
		&\lesi \f{1}{t^{(k+2)/2}}\exp\Big(-\f{|x-y|^2}{ct}\Big)\Big(1+\f{\sqrt t}{x}\Big)^{-\nu-1/2}\Big(1+\f{\sqrt t}{y}\Big)^{-\nu-1/2}
	\end{aligned}
	\]
	for $t>0$ and $x\sim y\gtrsim \sqrt t$.
	
	For the last term, applying \eqref{eq-formula for delta k xf}, \eqref{eq- del nu and del nu + 1} and \eqref{eq- inductive hypo delta k pt nu} to obtain
	\[
	\begin{aligned}
		\Big|\delta^{k}_\nu\Big[\f{y-x}{2t}p_t^{\nu+1}(x,y)\Big]\Big|&\lesi \f{1}{t}|\delta^{k-1}_\nu p_t^{\nu+1}(x,y) |+ \f{|y-x|}{t}|\delta^{k}_\nu p_t^{\nu+1}(x,y)|\\
		&\lesi \f{1}{t}|\delta^{k-1}_{\nu+1} p_t^{\nu+1}(x,y) |+\f{1}{tx}|\delta^{k-2}_{\nu+1} p_t^{\nu+1}(x,y) |\\
		& \ \ \ + \f{|y-x|}{t}|\delta^{k}_{\nu+1} p_t^{\nu+1}(x,y)|+ \f{|y-x|}{tx}|\delta^{k-1}_{\nu+1} p_t^{\nu+1}(x,y)|\\
		&\lesi \f{1}{t^{(k+2)/2}}\exp\Big(-\f{|x-y|^2}{ct}\Big)\Big(1+\f{\sqrt t}{x}\Big)^{-\nu-1/2}\Big(1+\f{\sqrt t}{y}\Big)^{-\nu-1/2}
\end{aligned}
\]
for $t>0$ and $x\sim y\gtrsim \sqrt t$.

	This completes our proof.
\end{proof}

From Propositions \ref{prop- delta k p nu first region} and \ref{prop-delta k pt  2nd region}, we have:
\begin{thm}
	\label{thm-delta k pt nu}
	Let $\nu>-1/2$. Then for $\ell \in \mathbb N$ we have
	\[
	|\delta^\ell_\nu p_t^\nu(x,y)|\lesi \f{1}{t^{(\ell+1)/2}}\exp\Big(-\f{|x-y|^2}{ct}\Big)\Big(1+\f{\sqrt t}{x}\Big)^{-\nu-1/2}\Big(1+\f{\sqrt t}{y}\Big)^{-\nu-1/2}
	\]
	for all $t>0$ and $x,y\in (0,\vc)$.
\end{thm}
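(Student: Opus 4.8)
The plan is to obtain the global bound by stitching together the two preceding propositions, whose hypotheses between them cover every configuration of $x$, $y$, and $\sqrt t$. First I would dispose of the base case $\ell=0$: there the asserted inequality is literally the statement of Theorem~\ref{thm-heat kernel}, which already holds for all $t>0$ and all $x,y\in(0,\vc)$, so no further work is needed and I may assume $\ell\ge 1$.

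For $\ell\ge 1$ I would fix $t>0$ and $x,y\in(0,\vc)$ and argue by cases according to whether the hypothesis of Proposition~\ref{prop- delta k p nu first region} holds. If $x<\sqrt t$, or $x>2y$, or $x<y/2$, then that proposition applies verbatim and produces exactly the desired estimate. In the complementary case all three conditions fail, so that $x\ge\sqrt t$, $x\le 2y$, and $x\ge y/2$ hold simultaneously. The inequalities $y/2\le x\le 2y$ say precisely $x\sim y$, and combining $x\ge\sqrt t$ with $y\ge x/2$ gives $y\gtrsim\sqrt t$; thus $x\sim y\gtrsim\sqrt t$, which is exactly the hypothesis of Proposition~\ref{prop-delta k pt 2nd region}. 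Applying that proposition---and simply discarding the additional nonnegative term appearing on the left-hand side of its estimate---yields the same bound.

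Since the two cases are exhaustive, the estimate then holds for all $t>0$ and $x,y\in(0,\vc)$, completing the argument. I do not anticipate any genuine obstacle here: all of the analytic difficulty has already been absorbed into Propositions~\ref{prop- delta k p nu first region} and~\ref{prop-delta k pt 2nd region}, and the only thing that must be checked is the elementary observation that the regions $\{x<\sqrt t\}\cup\{x>2y\}\cup\{x<y/2\}$ and $\{x\sim y\gtrsim\sqrt t\}$ together exhaust $(0,\vc)^2$. The one point worth flagging is the bookkeeping that keeps the comparability constants implicit in $x\sim y$ and $y\gtrsim\sqrt t$ consistent with the constants used inside Proposition~\ref{prop-delta k pt 2nd region}; since the derivation above produces $y/2\le x\le 2y$ and $y\ge\sqrt t/2$ with explicit factors of $2$, this matching is routine and causes no trouble.
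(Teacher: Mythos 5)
Your argument is exactly the paper's: the paper derives Theorem \ref{thm-delta k pt nu} directly by combining Propositions \ref{prop- delta k p nu first region} and \ref{prop-delta k pt  2nd region}, whose hypotheses together exhaust all configurations of $x$, $y$, $\sqrt t$, with the $\ell=0$ case already settled by Theorem \ref{thm-heat kernel}. Your case analysis is correct and simply spells out the exhaustiveness check that the paper leaves implicit.
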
	

\begin{thm}
	\label{thm-partial derivative of delta k pt nu}
	Let $\nu>-1/2$. Then for $\ell, k \in \mathbb N$ we have
	\[
	|\partial_x^k \delta^\ell_\nu p_t^\nu(x,y)|\lesi \Big[\f{1}{t^{k/2}} +\f{1}{x^k} \Big]\f{1}{t^{(\ell+1)/2}}\exp\Big(-\f{|x-y|^2}{ct}\Big)\Big(1+\f{\sqrt t}{x}\Big)^{-\nu-1/2}\Big(1+\f{\sqrt t}{y}\Big)^{-\nu-1/2}
	\]
	and
	\[
	|\partial_y^k \delta^\ell_\nu p_t^\nu(x,y)|\lesi \Big[\f{1}{t^{k/2}} +\f{1}{y^k} \Big]\f{1}{t^{(\ell+1)/2}}\exp\Big(-\f{|x-y|^2}{ct}\Big)\Big(1+\f{\sqrt t}{x}\Big)^{-\nu-1/2}\Big(1+\f{\sqrt t}{y}\Big)^{-\nu-1/2}
	\]
	for all $t>0$ and $x,y\in (0,\vc)$.
\end{thm}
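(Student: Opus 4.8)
The plan is to reduce both estimates to the single‑variable bound of Theorem \ref{thm-delta k pt nu} by writing the ordinary derivatives in terms of the Bessel derivative. Since $\delta_\nu = \partial_x - \f{\nu+1/2}{x}$, we have $\partial_x = \delta_\nu + \f{\nu+1/2}{x}$, and a direct computation gives the commutation relation $[\delta_\nu, x^{-j}] = -j\,x^{-(j+1)}$ for every $j\ge 1$. Using this repeatedly, I would first prove by induction on $k$ the normal‑ordering identity
\[
\partial_x^k \,\delta_\nu^\ell \,p_t^\nu(x,y) = \sum_{m=0}^{k} a_{m,k}\, \f{1}{x^{k-m}}\,\delta_\nu^{m+\ell}\,p_t^\nu(x,y),
\]
where the constants $a_{m,k}$ depend only on $\nu, k, \ell$. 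Indeed, applying $\partial_x = \delta_\nu + \f{\nu+1/2}{x}$ to a single term $x^{-(k-m)}\delta_\nu^{m+\ell}p_t^\nu$ and commuting $\delta_\nu$ past $x^{-(k-m)}$ produces only terms of the form $x^{-((k+1)-m')}\delta_\nu^{m'+\ell}p_t^\nu$ with $m'\in\{m, m+1\}$, which preserves the claimed shape.

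Next I would bound each summand by Theorem \ref{thm-delta k pt nu},
\[
\Big|\f{1}{x^{k-m}}\delta_\nu^{m+\ell}p_t^\nu(x,y)\Big|\lesi \f{1}{x^{k-m}}\,\f{1}{t^{(m+\ell+1)/2}}\exp\Big(-\f{|x-y|^2}{ct}\Big)\Big(1+\f{\sqrt t}{x}\Big)^{-\nu-1/2}\Big(1+\f{\sqrt t}{y}\Big)^{-\nu-1/2}.
\]
The only remaining point is the elementary interpolation inequality $\f{1}{x^{k-m}t^{m/2}}\le \f{1}{t^{k/2}}+\f{1}{x^k}$, which follows by distinguishing the case $x\ge \sqrt t$ (then $x^{-(k-m)}\le t^{-(k-m)/2}$) from $x<\sqrt t$ (then $t^{-m/2}\le x^{-m}$). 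Summing the finitely many terms yields the stated estimate for $\partial_x^k\delta_\nu^\ell p_t^\nu$.

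For the $\partial_y$‑estimate the same normal ordering, now carried out in the $y$ variable with $\partial_y = \delta_{\nu,y}+\f{\nu+1/2}{y}$ (where $\delta_{\nu,y}$ is the Bessel derivative acting in $y$, which commutes with the $x$‑operator $\delta_\nu$), reduces matters to the mixed bound
\[
|\delta_{\nu,y}^{m}\delta_\nu^{\ell}p_t^\nu(x,y)|\lesi \f{1}{t^{(m+\ell+1)/2}}\exp\Big(-\f{|x-y|^2}{ct}\Big)\Big(1+\f{\sqrt t}{x}\Big)^{-\nu-1/2}\Big(1+\f{\sqrt t}{y}\Big)^{-\nu-1/2}.
\]
This mixed estimate is the main obstacle, since Theorem \ref{thm-delta k pt nu} controls only derivatives in a single variable. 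I would obtain it from the Chapman--Kolmogorov identity $p_t^\nu(x,y)=\int_0^\infty p_{t/2}^\nu(x,z)\,p_{t/2}^\nu(z,y)\,dz$: differentiating under the integral and using the symmetry $p_{t/2}^\nu(z,y)=p_{t/2}^\nu(y,z)$ gives
\[
\delta_{\nu,y}^m\delta_\nu^\ell p_t^\nu(x,y)=\int_0^\infty \big[\delta_\nu^\ell p_{t/2}^\nu(x,z)\big]\big[\delta_\nu^m p_{t/2}^\nu(y,z)\big]\,dz,
\]
where each factor is now a single‑variable derivative governed by Theorem \ref{thm-delta k pt nu}. Bounding the two $z$‑weights by $1$ (legitimate since $\nu>-1/2$ forces $-\nu-1/2<0$) and evaluating the Gaussian convolution $\int_0^\infty e^{-|x-z|^2/ct}e^{-|z-y|^2/ct}\,dz\lesi \sqrt t\,e^{-|x-y|^2/c't}$ produces exactly the factor $t^{1/2}$ that converts $t^{-(\ell+1)/2}t^{-(m+1)/2}$ into $t^{-(m+\ell+1)/2}$, giving the mixed bound. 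Feeding this into the $y$‑normal ordering and applying the interpolation inequality with $y$ in place of $x$ completes the proof.
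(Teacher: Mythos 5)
Your proof of the first estimate is correct and follows essentially the same route as the paper: normal-order $\partial_x^k=\bigl(\delta_\nu+\tfrac{\nu+1/2}{x}\bigr)^k$ into $\sum_j c_j x^{-j}\delta_\nu^{k-j}$, invoke Theorem \ref{thm-delta k pt nu} termwise, and absorb the sum $\sum_j x^{-j}t^{-(k-j)/2}$ into $t^{-k/2}+x^{-k}$ by the two-case interpolation inequality. All the ingredients you cite (the commutator $[\delta_\nu,x^{-j}]=-jx^{-(j+1)}$, the shape-preservation in the induction, the interpolation step) check out.

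Where you genuinely diverge from the paper is the $\partial_y$-estimate. The paper disposes of it with a single ``Similarly,'' but, as you correctly point out, the reduction there is not literally symmetric: after normal-ordering in $y$ one is left with the mixed quantities $\delta_{\nu,y}^{m}\delta_{\nu,x}^{\ell}p_t^\nu(x,y)$, which are not covered by Theorem \ref{thm-delta k pt nu} since that result bounds derivatives in one variable only. Your fix --- factoring through the semigroup identity $p_t^\nu(x,y)=\int_0^\infty p_{t/2}^\nu(x,z)p_{t/2}^\nu(z,y)\,dz$, using the symmetry of the kernel to turn the $y$-derivative of the second factor into a first-variable Bessel derivative, discarding the two $z$-weights (legitimate since their exponent $-\nu-1/2$ is negative), and recovering the factor $\sqrt t$ from the Gaussian convolution --- is sound and gives exactly the mixed bound $t^{-(m+\ell+1)/2}$ with the $x$- and $y$-weights intact. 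This supplies a complete justification for a step the paper leaves implicit; the cost is only the standard verification that one may differentiate under the integral, which the exponential bounds of Theorem \ref{thm-delta k pt nu} make routine. In short: same argument as the paper for the $x$-derivative, and a more careful (and needed) argument for the $y$-derivative.
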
	
\begin{proof}
	Since $\delta_\nu(fg)=\delta_\nu f g+f\delta_\nu g$, we have 
	\[
	\partial_x^k=\Big[\delta_\nu +\f{1}{x}\Big(\nu+\f{1}{2}\Big)\Big]^k= \sum_{j=0}^k \f{c_j}{x^j}\delta_\nu^{k-j},
	\]
	where $c_j$ are constants.
	
	This, together with Theorem \ref{thm-delta k pt nu}, implies
	\[
	\begin{aligned}
		|\partial_x^k \delta^\ell_\nu p_t^\nu(x,y)|&\lesi \sum_{j=0}^k \f{1}{x^j}|\delta_\nu^{k+\ell-j}p_t^\nu(x,y)|\\
		&\lesi \Big[\sum_{j=0}^k\f{1}{x^j t^{(k-j)/2}} \Big]\f{1}{t^{(\ell+1)/2}}\exp\Big(-\f{|x-y|^2}{ct}\Big)\Big(1+\f{\sqrt t}{x}\Big)^{-\nu-1/2}\Big(1+\f{\sqrt t}{y}\Big)^{-\nu-1/2}.
	\end{aligned}
	\]
	Using the following inequality
	\[
	\sum_{j=0}^k\f{1}{x^j t^{(k-j)/2}}\lesi \f{1}{t^{k/2}} +\f{1}{x^k},
	\]
	we further imply
	\[
	\begin{aligned}
		|\partial_x^k \delta^\ell_\nu p_t^\nu(x,y)| \lesi \Big[\f{1}{t^{k/2}} +\f{1}{x^k} \Big]\f{1}{t^{(\ell+1)/2}}\exp\Big(-\f{|x-y|^2}{ct}\Big)\Big(1+\f{\sqrt t}{x}\Big)^{-\nu-1/2}\Big(1+\f{\sqrt t}{y}\Big)^{-\nu-1/2}
	\end{aligned}
	\]
		for all $t>0$ and $x,y\in (0,\vc)$.
	Similarly, we have
	\[
	|\partial_y^k \delta^\ell_\nu p_t^\nu(x,y)|\lesi \Big[\f{1}{t^{k/2}} +\f{1}{y^k} \Big]\f{1}{t^{(\ell+1)/2}}\exp\Big(-\f{|x-y|^2}{ct}\Big)\Big(1+\f{\sqrt t}{x}\Big)^{-\nu-1/2}\Big(1+\f{\sqrt t}{y}\Big)^{-\nu-1/2}
	\]
		for all $t>0$ and $x,y\in (0,\vc)$.

		This completes our proof.
\end{proof}

	
\begin{cor}\label{cor1}
	Let $\nu>-1/2$. Then for each $k, M\in \mathbb N$ we have
	\begin{equation}\label{eq1-cor1}
	|\delta^k_\nu \Delta^M_\nu p_t^\nu(x,y)|\lesi \f{1}{t^{(k+2M+1)/2}}\exp\Big(-\f{|x-y|^2}{ct}\Big)
	\end{equation}
	and
	\begin{equation}\label{eq2-cor1}
	|\Delta_\nu^M (\delta^*_\nu)^kp_t^{\nu+k+2M}(x,y)|\lesi \f{1}{t^{(k+2M+1)/2}}\exp\Big(-\f{|x-y|^2}{ct}\Big)
	\end{equation}
	for all $x,y\in \mathbb R_+$ and $t>0$.
\end{cor}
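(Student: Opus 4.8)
The plan is to treat the two estimates by different mechanisms, the choice being dictated by how the Bessel index sits relative to the order of the operator.

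For \eqref{eq2-cor1} I would argue purely algebraically from Theorem \ref{thm-delta k pt nu}. The operator $\Delta_\nu^M(\delta_\nu^*)^k$ is a differential operator on $(0,\vc)$ of order $k+2M$ which is homogeneous of degree $-(k+2M)$ under the scaling $x\mapsto \lambda x$ (each of $\delta_\nu,\delta_\nu^*$ is homogeneous of degree $-1$ and $\Delta_\nu$ of degree $-2$). Hence, writing $\partial_x=\delta_{\nu+k+2M}+\f{\nu+k+2M+1/2}{x}$ and collecting terms, it can be expanded as
\[
\Delta_\nu^M(\delta_\nu^*)^k=\sum_{i=0}^{k+2M}c_i\,x^{-(k+2M-i)}\,\delta_{\nu+k+2M}^{\,i}
\]
with constants $c_i$. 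Applying this to $p_t^{\nu+k+2M}$ and invoking Theorem \ref{thm-delta k pt nu} at the index $\mu:=\nu+k+2M$ bounds the $i$-th term by
\[
x^{-(k+2M-i)}\,t^{-(i+1)/2}\exp\Big(-\f{|x-y|^2}{ct}\Big)\Big(1+\f{\sqrt t}{x}\Big)^{-\mu-1/2}\Big(1+\f{\sqrt t}{y}\Big)^{-\mu-1/2}.
\]
Setting $j:=k+2M-i$, the scalar factor equals $t^{-(k+2M+1)/2}(\sqrt t/x)^{j}(1+\sqrt t/x)^{-\mu-1/2}$, and since $0\le j\le k+2M$ while $\mu+1/2=\nu+k+2M+1/2>k+2M\ge j$, the quantity $(\sqrt t/x)^{j}(1+\sqrt t/x)^{-\mu-1/2}$ is bounded by a constant (it is $\le 1$ when $\sqrt t\le x$, and $\sim(\sqrt t/x)^{\,j-\mu-1/2}\le 1$ when $\sqrt t\ge x$). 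Dropping the two factors $(1+\sqrt t/\,\cdot\,)^{-\mu-1/2}\le 1$ then yields \eqref{eq2-cor1}. I want to stress that it is precisely the \emph{raised} index $\nu+k+2M$ that supplies enough spatial decay near $x=0$ to absorb the negative powers $x^{-j}$ produced by the differential operator.

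The same expansion fails for \eqref{eq1-cor1}: applied to $p_t^\nu$ it only gives the decay $(1+\sqrt t/x)^{-\nu-1/2}$, which cannot absorb $x^{-j}$ for $j>\nu+1/2$ (equivalently, in $\Delta_\nu=-\partial_x^2+(\nu^2-1/4)x^{-2}$ the two summands applied to $p_t^\nu$ blow up individually near $x=0$, and only their sum is controlled). For \eqref{eq1-cor1} I would therefore exploit the heat equation and the semigroup law, so that the $\Delta_\nu^M$ factor is never differentiated spatially. Splitting $e^{-t\Delta_\nu}=e^{-\frac t2\Delta_\nu}e^{-\frac t2\Delta_\nu}$ gives, at the level of kernels,
\[
\delta_\nu^k\Delta_\nu^M p_t^\nu(x,y)=\int_0^\infty \big[\delta_{\nu,x}^k p^\nu_{t/2}(x,z)\big]\,\big[\Delta_{\nu,z}^M p^\nu_{t/2}(z,y)\big]\,dz,
\]
so that, using Theorem \ref{thm-delta k pt nu} for the first factor and the Gaussian bound $|\Delta_\nu^M p_s^\nu(u,v)|\lesi s^{-(2M+1)/2}\exp(-|u-v|^2/cs)$ for the second, a convolution of Gaussians (which gains a factor $t^{1/2}$) reduces matters to the case $k=0$. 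For $k=0$ I would further write $\Delta_\nu^M e^{-t\Delta_\nu}=\big(\Delta_\nu e^{-\frac tM\Delta_\nu}\big)^M$ and convolve $M$ copies of the $M=1$ bound, the powers combining as $s^{-3M/2}\cdot s^{(M-1)/2}=s^{-(2M+1)/2}$ with $s=t/M$.

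This leaves the single genuinely analytic input, the case $k=0,\ M=1$, namely $|\Delta_\nu p_t^\nu|=|\partial_t p_t^\nu|\lesi t^{-3/2}\exp(-|x-y|^2/ct)$, where I use $\partial_t p_t^\nu=-\Delta_\nu p_t^\nu$. I would obtain this directly from the explicit formula \eqref{eq1-ptxy} by differentiating once in $t$ and running the same two-case analysis ($xy\le 2t$ and $xy>2t$) as in the proof of Theorem \ref{thm-heat kernel}: in the regime $xy\le 2t$ the factor $(xy/t)^{\nu+1/2}\lesi 1$ together with $\frac{x^2+y^2}{t}e^{-(x^2+y^2)/4t}\lesi 1$ keeps the estimate free of the $x^{-2}$ singularity, while in the regime $xy>2t$ the asymptotics \eqref{eq2-Inu}--\eqref{eq3-Inu} produce the Gaussian decay. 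I expect this base case to be the main technical obstacle, precisely because one must differentiate the Bessel-function representation rather than the operator $\Delta_\nu=-\partial_x^2+(\nu^2-1/4)x^{-2}$, so as not to destroy the cancellation that makes $\Delta_\nu p_t^\nu$ nonsingular at $x=0$; everything else is bookkeeping with Theorem \ref{thm-delta k pt nu} and convolution of Gaussians.
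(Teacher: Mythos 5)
Your proof of \eqref{eq2-cor1} is essentially the paper's: the same commutation identities $\delta_\nu=\delta_{\nu+k+2M}+\frac{k+2M}{x}$ and $\delta_\nu^*=-\delta_{\nu+k+2M}-\frac{2\nu+k+2M+1}{x}$ are used to expand $\Delta_\nu^M(\delta_\nu^*)^k$ as $\sum_j c_j x^{-j}\delta_{\nu+k+2M}^{\,k+2M-j}$, and the raised index in Theorem \ref{thm-delta k pt nu} absorbs the powers $x^{-j}$ exactly as you describe. For \eqref{eq1-cor1} the paper likewise splits $e^{-t\Delta_\nu}=e^{-\frac t2\Delta_\nu}\circ e^{-\frac t2\Delta_\nu}$, puts $\delta_\nu^k$ on one factor and $\Delta_\nu^M=(-1)^M\partial_t^M$ on the other, and convolves Gaussians. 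The only divergence is how the bound $|\partial_t^M p_s^\nu(z,y)|\lesssim s^{-(2M+1)/2}e^{-|z-y|^2/cs}$ is obtained: the paper gets it in one line from the Gaussian bound of Theorem \ref{thm-heat kernel} together with \cite[Lemma 2.5]{CD} (time derivatives of a heat kernel with Gaussian bounds), whereas you propose the factorization $\Delta_\nu^Me^{-t\Delta_\nu}=\big(\Delta_\nu e^{-\frac tM\Delta_\nu}\big)^M$ plus a hands-on computation of $\partial_t p_t^\nu$ from \eqref{eq1-ptxy}. Your route is more self-contained, but the regime $xy>2t$ is not as painless as your sketch suggests: differentiating \eqref{eq1-ptxy} in $t$ produces the term $\frac{x^2+y^2}{4t^2}\cdot\frac{(xy)^{1/2}}{2t}e^{-(x^2+y^2)/4t}I_\nu(xy/2t)$, and since $\frac{x^2+y^2}{4t}\ge\frac{xy}{2t}$ can be arbitrarily large while $|x-y|$ stays bounded, the asymptotics \eqref{eq2-Inu}--\eqref{eq3-Inu} alone do not yield $t^{-3/2}e^{-|x-y|^2/ct}$ term by term; you must pair the $\frac{xy}{2t}I_\nu$ contribution with the $-\frac{xy}{2t}I_{\nu+1}$ contribution coming from $I_\nu'$ and invoke the cancellation \eqref{eq5s-Inu}, i.e.\ $z\big(I_\nu(z)-I_{\nu+1}(z)\big)\lesssim I_{\nu+1}(z)$. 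With that one addition your base case closes, and the remaining bookkeeping (each Gaussian convolution on the half-line gaining a factor $s^{1/2}$) is correct.
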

\begin{proof}
	We first write 
	\[
	\delta^k_\nu \Delta^M_\nu e^{-t\Delta_\nu}= \delta^k_\nu e^{-\f{t}{2}\Delta_\nu}\circ \Delta^M_\nu e^{-\f{t}{2}\Delta_\nu},
	\]
	which implies
	\[
	\delta^k_\nu \Delta^M_\nu p_t^\nu(x,y)=(-1)^M\int_{\mathbb R_+}\delta^k_\nu p_{t/2}^\nu(x,z) \partial_t^Mp_{t/2}^\nu(z,y)dz.
	\]
	By Theorem \ref{thm-delta k pt nu},
	\[
	|\delta^k_\nu p_{t/2}^\nu(x,z)|\lesi \f{1}{t^{(k+1)/2}}\exp\Big(-\f{|x-z|^2}{ct}\Big)
	\]
	for all $x,z\in \mathbb R_+$ and $t>0$.
	
	On the other hand, from the Gaussian upper bound of $p_t^\nu(x,y)$ in Theorem \ref{thm-heat kernel} and \cite[Lemma 2.5]{CD}, we have
	\[
	|\partial_t^Mp_{t/2}^\nu(z,y)|\lesi \f{1}{t^{(2M+1)/2}}\exp\Big(-\f{|z-y|^2}{ct}\Big)
	\]
	for all $z,y\in \mathbb R_+$ and $t>0$.
	
	Therefore,
	\[
	\begin{aligned}
		|\delta^k_\nu \Delta^M_\nu p_t^\nu(x,y)|&\lesi \int_{\mathbb R_+} \f{1}{t^{(k+1)/2}}\exp\Big(-\f{|x-z|^2}{ct}\Big)\f{1}{t^{(2M+1)/2}}\exp\Big(-\f{|z-y|^2}{ct}\Big)dz\\
		&\lesi \f{1}{t^{(k+2M+1)/2}}\exp\Big(-\f{|x-y|^2}{ct}\Big)
	\end{aligned}
	\]
	for all $x,y\in \mathbb R_+$ and $t>0$, which ensures \eqref{eq1-cor1}.
	
	For \eqref{eq2-cor1}, we first have
	\[
	\delta_\nu = \delta_{\nu+k+2M} + \f{k+2M}{x}
	\]
	and
	\[
	\delta_\nu^* = -\delta_\nu -\f{2\nu+1}{x}=-\delta_{\nu+k+2M}-\f{2\nu+k+2M+1}{x}.
	\]
	Hence,
	\[
	\begin{aligned}
		\Delta_\nu^M (\delta^*_\nu)^k &= (\delta_\nu^*\delta_\nu)^M(\delta^*_\nu)^k\\
		&= \Big[\Big(-\delta_{\nu+k+2M}-\f{2\nu+k+2M+1}{x}\Big) \Big( \delta_{\nu+k+2M} + \f{k+2M}{x}\Big)\Big]^M\\
		& \ \ \Big( -\delta_{\nu+k+2M}-\f{2\nu+k+2M+1}{x}\Big)^k.
	\end{aligned}
	\]
	Using the fact $\delta_\nu(fg)=\delta_\nu f g + f' g$, we further implies
	\[
	\Delta_\nu^M (\delta^*_\nu)^k=\sum_{j=0}^{2M+k} \f{c_j}{x^j}\delta^{2M+k-j}_{\nu+k+2M},
	\]
	where $c_j$ are constants.
	
	It follows that
	\[
	\begin{aligned}
		|\Delta_\nu^M (\delta^*_\nu)^kp_t^{\nu+k+2M}(x,y)|&\lesi \sum_{j=0}^{2M+k} \f{1}{x^j}|\delta^{2M+k-j}_{\nu+k+2M} p_t^{\nu+k+M}(x,y)|,
	\end{aligned}
	\] 
	which, together with Theorem \ref{thm-delta k pt nu}, implies
	\[
	\begin{aligned}
		|\Delta_\nu^M (\delta^*_\nu)^kp_t^{\nu+k+2M}(x,y)|&\lesi \sum_{j=0}^{2M+k} \f{1}{x^j}\f{1}{t^{(2M+k-j+1)/2}}\exp\Big(-\f{|x-y|^2}{ct}\Big)\Big(1+\f{\sqrt t}{x}\Big)^{-j}\\
		&   \lesi \f{1}{t^{(2M+k+1)/2}}\exp\Big(-\f{|x-y|^2}{ct}\Big)
	\end{aligned}
	\] 
	for all $x,y \in \mathbb R_+$ and $t>0$.
	
	This completes our proof.
\end{proof}

The following results play an important role in proving the $L^2$-boundedness of the higher-order Riesz transforms.
\begin{prop}\label{prop- difference two riesz kernels}
	Let $\nu\in (-1/2,\vc)$. Then for $\ell\in \mathbb N$ we have
	\[
	|\delta_\nu^{\ell} p_t^\nu(x,y)-\delta_{\nu+1}^{\ell} p_t^{\nu+1}(x,y)|\lesi \f{1}{xt^{\ell/2}}\exp\Big(-\f{|x-y|^2}{ct}\Big)
	\]
	for all $y/2<x<2y$ and $x\ge \sqrt t$.
\end{prop}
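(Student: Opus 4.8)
The plan is to reduce the whole estimate to Proposition \ref{prop-delta k pt  2nd region}, which already controls $\f{x}{t}|\delta_\nu^{m}[p_t^\nu-p_t^{\nu+1}]|$ in the relevant region, after converting the $\delta_{\nu+1}$-derivatives that will appear into $\delta_\nu$-derivatives. First I would record two reductions valid throughout. The hypotheses $y/2<x<2y$ and $x\ge\sqrt t$ force $x\sim y\gtrsim\sqrt t$, so we sit inside the region of Proposition \ref{prop-delta k pt  2nd region}; there the factors $\big(1+\sqrt t/x\big)^{-\nu-1/2}\big(1+\sqrt t/y\big)^{-\nu-1/2}\sim 1$, and I will silently drop them. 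Second, in this region $1/x\lesssim 1/\sqrt t$, and this is the mechanism that lets me absorb the stray powers of $1/x$ produced when commuting multiplication operators past derivatives.

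The key step is an auxiliary estimate: for every $m\in\mathbb N$,
\[
|\delta_{\nu+1}^m(p_t^\nu-p_t^{\nu+1})|\lesssim \f{1}{x\,t^{m/2}}\exp\Big(-\f{|x-y|^2}{ct}\Big),\qquad x\sim y\gtrsim\sqrt t.
\]
Since $\delta_{\nu+1}=\delta_\nu-\f1x$ and $\delta_\nu$ commutes with multiplication by $\f1x$ up to lower order (one has $\delta_\nu\circ\f1x=\f1x\delta_\nu-\f1{x^2}$), moving all multiplication operators to the left yields an expansion $\delta_{\nu+1}^m=\sum_{j=0}^m c_j\,x^{-j}\delta_\nu^{m-j}$ with constants $c_j$. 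Applying Proposition \ref{prop-delta k pt  2nd region} in the form $|\delta_\nu^{m-j}[p_t^\nu-p_t^{\nu+1}]|\lesssim \f{1}{x\,t^{(m-j)/2}}\exp(-|x-y|^2/ct)$ to each summand and then using $x^{-j}\lesssim t^{-j/2}$ collapses every term to the single bound $\f{1}{x\,t^{m/2}}\exp(-|x-y|^2/ct)$, which proves the claim. The case $m=0$ is precisely the $\ell=1$ instance of Proposition \ref{prop-delta k pt  2nd region}, so no separate base case is needed.

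With the auxiliary estimate in hand, the statement follows from the decomposition coming from \eqref{eq- del nu and del nu + 1}, namely $\delta_\nu^\ell=\delta_{\nu+1}^\ell+\f{\ell}{x}\delta_{\nu+1}^{\ell-1}$, which gives
\[
\delta_\nu^\ell p_t^\nu-\delta_{\nu+1}^\ell p_t^{\nu+1}=\delta_{\nu+1}^\ell(p_t^\nu-p_t^{\nu+1})+\f{\ell}{x}\delta_{\nu+1}^{\ell-1}p_t^\nu.
\]
The first term is bounded directly by the auxiliary estimate with $m=\ell$. For the second I split $p_t^\nu=p_t^{\nu+1}+(p_t^\nu-p_t^{\nu+1})$: the piece $\f{\ell}{x}\delta_{\nu+1}^{\ell-1}p_t^{\nu+1}$ is controlled by Theorem \ref{thm-delta k pt nu} applied with index $\nu+1$, giving $\f{1}{x\,t^{\ell/2}}$, while $\f{\ell}{x}\delta_{\nu+1}^{\ell-1}(p_t^\nu-p_t^{\nu+1})$ is handled by the auxiliary estimate with $m=\ell-1$ together with one more factor $1/x\lesssim 1/\sqrt t$. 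Summing, all contributions are $\lesssim \f{1}{x\,t^{\ell/2}}\exp(-|x-y|^2/ct)$, and the case $\ell=0$ is simply the $m=0$ auxiliary estimate.

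I expect the main obstacle to be the bookkeeping in the auxiliary step: tracking which index ($\nu$ or $\nu+1$) each derivative carries relative to the heat kernel, and checking that the asymmetry of the commutation relation does not spoil the gain. The clean identity $\delta_\nu^\ell=\delta_{\nu+1}^\ell+\f{\ell}{x}\delta_{\nu+1}^{\ell-1}$ holds when raising the index, whereas the reverse expansion $\delta_{\nu+1}^m$ in terms of $\delta_\nu^{m-j}$ produces a full tail of $x^{-j}$ corrections. The point is that all these extra negative powers of $x$ are exactly what $x\ge\sqrt t$ allows one to trade for negative powers of $\sqrt t$, so that the decisive improvement of a factor $1/x$ over the raw kernel bound of Theorem \ref{thm-delta k pt nu} is preserved rather than lost.
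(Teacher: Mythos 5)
Your argument is correct, and it takes a genuinely different route from the paper. The paper proves the proposition by induction on $\ell$: it peels off one derivative via \eqref{eq- del nu and del nu + 1}, then invokes the explicit identity $\delta_\nu p_t^\nu-\delta_{\nu+1}p_t^{\nu+1}=\f{2(\nu+1)}{y}p_t^{\nu+1}+\f{y-x}{2t}[p_t^{\nu+1}-p_t^{\nu+2}]$ (which rests on the Bessel recurrence \eqref{eq6-Inu} through \eqref{eq-del pt new}), and estimates the resulting terms using the inductive hypothesis, Theorem \ref{thm-delta k pt nu} and Proposition \ref{prop-delta k pt  2nd region}. You avoid both the induction and the recurrence: your only inputs are the operator expansion $\delta_{\nu+1}^m=\sum_{j=0}^m c_j x^{-j}\delta_\nu^{m-j}$ (valid with genuine constants, by the commutation $\delta_\nu\circ\f{1}{x}=\f{1}{x}\delta_\nu-\f{1}{x^2}$; the paper uses the same device in the proof of Theorem \ref{thm-partial derivative of delta k pt nu}), the identity \eqref{eq- del nu and del nu + 1} applied once at the top level, and the difference bound already contained in the second summand of Proposition \ref{prop-delta k pt  2nd region}, which for $\ell=m-j+1\ge 1$ indeed reads $|\delta_\nu^{m-j}[p_t^\nu-p_t^{\nu+1}]|\lesi \f{1}{x\,t^{(m-j)/2}}\exp(-|x-y|^2/(ct))$ in the region $x\sim y\gtrsim\sqrt t$, exactly as you claim. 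The trade $x^{-j}\lesi t^{-j/2}$ then collapses the tail, and there is no circularity since Proposition \ref{prop-delta k pt  2nd region} and Theorem \ref{thm-delta k pt nu} are established independently beforehand. What your approach buys is brevity and the elimination of the case analysis ($E_{11},E_{12},E_{13},\dots$) in the paper's inductive step; what it costs is a heavier reliance on the precise form of Proposition \ref{prop-delta k pt  2nd region}, whereas the paper's self-contained induction re-derives the needed difference control from the kernel identities each time.
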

\begin{proof}
	
	We will prove the inequality  by induction.
	
	For $\ell=0$, by \eqref{eq5-Inu} and Theorem \ref{thm-heat kernel} we have
	\[
	\begin{aligned}
		|\delta_\nu^{\ell} p_t^\nu(x,y)-\delta_{\nu+1}^{\ell} p_t^{\nu+1}(x,y)|&= |p_t^\nu(x,y)- p_t^{\nu+1}(x,y)|\\
		&\lesi \f{t}{xy}\f{1}{\sqrt t}\exp\Big(-\f{|x-y|^2}{ct}\Big)\\
		&\lesi \f{1}{x}\exp\Big(-\f{|x-y|^2}{ct}\Big),
	\end{aligned}
	\]
	as long as $y/2<x<2y$ and $x\ge \sqrt t$.
	
	This ensures the proposition for the case $\ell=0$.
	
	Assume the proposition is true for $\ell=0,1,\ldots, k$. That is, for $\ell=0,1,\ldots, k$, we have
	\[
	|\delta_\nu^{\ell} p_t^\nu(x,y)-\delta_{\nu+1}^{\ell} p_t^{\nu+1}(x,y)|\lesi \f{1}{xt^{\ell/2}}\exp\Big(-\f{|x-y|^2}{ct}\Big)
	\]
	for all $y/2<x<2y$ and $x\ge \sqrt t$.
	
	We need to prove the estimate for $\ell=k+1$. Using \eqref{eq- del nu and del nu + 1}, we have
	\[
	\begin{aligned}
		|\delta^{k+1}_\nu p_t^\nu(x,y) - \delta^{k+1}_{\nu+1} p_t^{\nu+1}(x,y)|&= \Big|\delta^{k}_\nu[\delta_\nu p_t^\nu(x,y) - \delta_{\nu+1}p_t^{\nu+1}(x,y)] +\f{1}{x}\delta_{\nu+1}^{k}p_t^{\nu+1}(x,y)\Big| \\
		&\lesi \Big|\delta^{k}_\nu[\delta_\nu p_t^\nu(x,y) - \delta_{\nu+1}p_t^{\nu+1}(x,y)]\Big| +\f{1}{x}|\delta_{\nu+1}^{k}p_t^{\nu+1}(x,y)|\\
		&=: E_1 +E_2.
	\end{aligned}
	\]
	By Theorem \ref{thm-delta k pt nu},
	\[
	E_2\lesi \f{1}{x}\f{1}{t^{(k+1)/2}}\exp\Big(-\f{|x-y|^2}{ct}\Big).
	\]	
	It remains to estimate $E_1$. From \eqref{eq2-delta p} and \eqref{eq6-Inu},
	\begin{equation}\label{eq-del pt new}
		\begin{aligned}
			\delta_\nu p_t^\nu(x,y)-\delta_{\nu+1} p_t^{\nu+1}(x,y)&=-\f{x}{2t}[p_t^\nu(x,y)-p_t^{\nu+2}(x,y)]+\f{y-x}{2t}[p_t^{\nu+1}(x,y)-p_t^{\nu+2}(x,y)]\\
			&= \f{2(\alpha+1)}{y} p_t^{\nu+1}(x,y)+\f{y-x}{2t}[p_t^{\nu+1}(x,y)-p_t^{\nu+2}(x,y)].
		\end{aligned}
	\end{equation}
	
	It, together with \eqref{eq-formula for delta k xf}, follows that 
	\[
	\begin{aligned}
		E_1 &\lesi  \f{1}{y} \delta_\nu^k p_t^{\nu+1}(x,y) +\f{1}{t}\big|\delta_\nu^{k-1}[p_t^{\nu+1}(x,y)-p_t^{\nu+2}(x,y)]\big| +\f{|y-x|}{t} \big|\delta_\nu^{k}[p_t^{\nu+1}(x,y)-p_t^{\nu+2}(x,y)]\big|\\
		&=:E_{11}+E_{12}+E_{13}.
	\end{aligned}
	\]	
	Using \eqref{eq- del nu and del nu + 1} and Theorem \ref{thm-delta k pt nu},
	\[
	\begin{aligned}
		E_{11}&\lesi \f{1}{y}|\delta_{\nu+1}^k p_t^{\nu+1}(x,y)|+\f{1}{xy}|\delta_{\nu+1}^{k-1} p_t^{\nu+1}(x,y)|\\
		&\lesi \f{1}{x} \f{1}{t^{(k+1)/2}}\exp\Big(-\f{|x-y|^2}{ct}\Big),
	\end{aligned}
	\]		
	as long as $y/2<x<2y$ and $x\ge \sqrt t$.
	
	For $E_{12}$, by \eqref{eq- del nu and del nu + 1} and Proposition \ref{prop-delta k pt  2nd region}, we have
	\[
	\begin{aligned}
		E_{12} &\lesi  \f{1}{t}\big|\delta_{\nu+1}^{k-1}[p_t^{\nu+1}(x,y)-p_t^{\nu+2}(x,y)]\big|+\f{1}{tx}\big|\delta_{\nu+1}^{k-2}[p_t^{\nu+1}(x,y)-p_t^{\nu+2}(x,y)]\big|\\
		&\lesi  \f{1}{x} \f{1}{t^{(k+1)/2}}\exp\Big(-\f{|x-y|^2}{ct}\Big),
	\end{aligned}
	\]
	as long as $y/2<x<2y$ and $x\ge \sqrt t$.

	Similarly,
	\[
	\begin{aligned}
		E_{13}&\lesi \f{|x-y|}{t}\f{1}{x t^{k/2}}\exp\Big(-\f{|x-y|^2}{ct}\Big)\\
		&\lesi  \f{1}{x} \f{1}{t^{(k+1)/2}}\exp\Big(-\f{|x-y|^2}{2ct}\Big),
	\end{aligned}
	\]
	as long as $x\sim y\gtrsim \sqrt t$.
	
	This completes our proof.
	
\end{proof}

\begin{prop}\label{prop- difference Riesz}
	Let $\nu\in (-1/2,\vc)$ and $k\in \mathbb N$. Then for any $\epsilon>0$, we have
	\begin{equation}\label{eq1- difference Riesz kernel}
	\int_0^\vc t^{k/2}|\delta_\nu^kp_t^{\nu}(x,y)- \delta_{\nu+1}^kp_t^{\nu+1}(x,y)|\f{dt}{t}\lesi \Big[\f{1}{x} + \f{1}{x}\Big(\f{x}{|x-y|}\Big)^{\epsilon}\Big]\chi_{\{y/2<x<2y\}} + \f{1}{x}\chi_{\{x\ge 2y\}} + \f{1}{y}\chi_{\{y\ge 2x\}}.
	\end{equation}
	Consequently, the  operator
	\[
	f\mapsto  \int_0^\vc t^{k/2}\big|[\delta_\nu^k e^{-t\Delta_\nu}- \delta_{\nu+1}^ke^{-t\Delta_{\nu+1}}]f\big|\f{dt}{t} 
	\]
	is bounded on $L^p(\mathbb R_+)$ for all $1<p<\vc$.
\end{prop}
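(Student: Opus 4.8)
The plan is to prove the pointwise bound \eqref{eq1- difference Riesz kernel} first, and then deduce the $L^p$-boundedness of the associated maximal-type operator via standard singular-integral theory. I would split the time integral into the two natural regimes $x\ge \sqrt t$ (equivalently $t\le x^2$) and $x<\sqrt t$ (equivalently $t>x^2$), since Proposition \ref{prop- difference two riesz kernels} gives a clean bound precisely when $x\ge \sqrt t$ and $y/2<x<2y$, while the complementary regime and the off-diagonal regions must be handled directly from Theorem \ref{thm-delta k pt nu}.

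First I would treat the main diagonal region $y/2<x<2y$. For the subregion $t\le x^2$ (so $x\ge \sqrt t$) I apply Proposition \ref{prop- difference two riesz kernels} to obtain
\[
t^{k/2}|\delta_\nu^kp_t^{\nu}(x,y)- \delta_{\nu+1}^kp_t^{\nu+1}(x,y)|\lesi \f{1}{x}\exp\Big(-\f{|x-y|^2}{ct}\Big),
\]
and then integrate $\f{dt}{t}$ over $0<t\le x^2$. Splitting this subintegral at $t=|x-y|^2$ gives the two contributions in the first bracket: for $t\le |x-y|^2$ the exponential is integrable and yields $\f{1}{x}$, while for $|x-y|^2\le t\le x^2$ one estimates $\exp(-|x-y|^2/ct)\lesi (|x-y|^2/t)^{\epsilon/2}$ and integrates to get the factor $\f{1}{x}(x/|x-y|)^{\epsilon}$. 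For the complementary subregion $t>x^2$ I estimate each term separately using Theorem \ref{thm-delta k pt nu}: since $x\sim y<\sqrt t$ the factors $(1+\sqrt t/x)^{-\nu-1/2}(1+\sqrt t/y)^{-\nu-1/2}\sim (\sqrt t/x)^{-2\nu-1}$, so that $t^{k/2}|\delta_\nu^kp_t^\nu|\lesi t^{-1/2}(x/\sqrt t)^{2\nu+1}$, whose integral $\int_{x^2}^\infty t^{-1/2}(x/\sqrt t)^{2\nu+1}\f{dt}{t}$ converges (as $\nu>-1/2$) and is bounded by $\f{1}{x}$; the same holds for the $\nu+1$ term, giving the leading $\f{1}{x}$.

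Next I handle the off-diagonal regions $x\ge 2y$ and $y\ge 2x$, where $|x-y|\sim \max\{x,y\}$. Here I bound the difference by the sum of the two terms and apply Theorem \ref{thm-delta k pt nu} to each; after inserting the exponential factor $\exp(-|x-y|^2/ct)$ and the $(1+\sqrt t/x)$, $(1+\sqrt t/y)$ weights, the integral $\int_0^\infty t^{k/2}|\delta_\nu^kp_t^\nu|\f{dt}{t}$ is a standard computation yielding $\f{1}{|x-y|}\sim \f{1}{x}$ when $x\ge 2y$ and $\sim \f{1}{y}$ when $y\ge 2x$, matching the last two terms of \eqref{eq1- difference Riesz kernel}. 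Finally, to conclude the $L^p$ statement, I define the kernel $K(x,y):=\int_0^\infty t^{k/2}|\delta_\nu^kp_t^{\nu}(x,y)- \delta_{\nu+1}^kp_t^{\nu+1}(x,y)|\f{dt}{t}$ and check via \eqref{eq1- difference Riesz kernel} that $\sup_x\int K(x,y)\,dy<\infty$ and $\sup_y\int K(x,y)\,dx<\infty$, i.e. the Schur test applies uniformly; the only delicate integrability is the $\f{1}{x}(x/|x-y|)^{\epsilon}$ term on $\{y/2<x<2y\}$, but choosing $0<\epsilon<1$ makes $\int_{y/2}^{2y}\f{1}{x}(x/|x-y|)^\epsilon\,dy$ finite and uniformly bounded, so Schur's lemma gives boundedness on $L^p$ for all $1<p<\infty$.

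The main obstacle will be organizing the diagonal region cleanly: the interplay between the two time-thresholds $|x-y|^2$ and $x^2$ must be tracked carefully so that the singular contribution is correctly isolated as the $(x/|x-y|)^{\epsilon}$ factor rather than producing a logarithmic divergence, and one must verify that the $\epsilon$-parameter can be chosen uniformly (independent of $x,y$) so that the final Schur estimate remains finite. Ensuring the constants do not degenerate as $\nu\to -1/2$ is a secondary technical point, but the hypothesis $\nu\in(-1/2,\vc)$ keeps all the exponents $2\nu+1>0$ strictly positive, so every time integral converges.
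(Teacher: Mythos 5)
Your proof of the pointwise kernel bound \eqref{eq1- difference Riesz kernel} follows essentially the same route as the paper: split the time integral at $t=x^2$ in the diagonal region, use Proposition \ref{prop- difference two riesz kernels} for $t\le x^2$ and Theorem \ref{thm-delta k pt nu} termwise for $t>x^2$, and treat the off-diagonal regions directly from Theorem \ref{thm-delta k pt nu} using $|x-y|\sim\max\{x,y\}$. That part is correct (your extra split of $(0,x^2]$ at $t=|x-y|^2$ is a harmless reorganization of the same computation).

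However, your deduction of the $L^p$-boundedness has a genuine gap: the unweighted Schur test does \emph{not} apply to this kernel. For the tail term $\f{1}{y}\chi_{\{y\ge 2x\}}$ one has
\[
\int_{2x}^{\vc}\f{dy}{y}=\vc ,
\]
so $\sup_x\int K(x,y)\,dy=\vc$, and symmetrically the term $\f{1}{x}\chi_{\{x\ge 2y\}}$ makes $\sup_y\int K(x,y)\,dx=\vc$. The ``only delicate integrability'' is therefore not the diagonal $\epsilon$-term (which you handle correctly) but these two off-diagonal pieces, which are exactly the classical Hardy operator $f\mapsto \f{1}{x}\int_0^{x/2}|f|$ and its adjoint $f\mapsto\int_{2x}^\vc \f{|f(y)|}{y}\,dy$. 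They are bounded on $L^p$ for $1<p<\vc$, but not by the constant-weight Schur test; you need either Hardy's inequality, a Schur test with a power weight $y^{-\alpha}$ chosen according to $p$, or the paper's argument, which dominates the first piece by the Hardy--Littlewood maximal function and handles the second by duality ($\langle T_3f,g\rangle\lesi\int |f|\,\mathcal Mg$). The paper also treats the diagonal $\epsilon$-term via H\"older with an exponent $1<r<p$ and $\epsilon<1/r'$, dominating it by $\mathcal M_rf$; your Schur-type handling of that particular term would also work, but the overall $L^p$ conclusion as you have written it does not follow without repairing the tail terms.
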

\begin{proof}
	
	If $y/2<x<2x$, then we have
	\[
	\int_0^\vc t^{k/2}|\delta_\nu^kp_t^{\nu+1}(x,y)- \delta_{\nu+1}^kp_t^\nu(x,y)|\f{dt}{t}=\int_0^{x^2} \ldots + \int_{x^2}^\vc \ldots
	\]
	For the first term, using Theorem \ref{thm-delta k pt nu},
	\[
	\begin{aligned}
		\int_{x^2}^\vc t^{k/2}|\delta_\nu^kp_t^{\nu+1}(x,y)- \delta_{\nu+1}^kp_t^\nu(x,y)|\f{dt}{t}&\le \int_{x^2}^\vc t^{k/2}|\delta_\nu^kp_t^{\nu+1}(x,y)|\f{dt}{t}+\int_{x^2}^\vc t^{k/2}| \delta_{\nu+1}^kp_t^\nu(x,y)|\f{dt}{t}\\
		&\lesi \int_{x^2}^\vc \f{1}{\sqrt t}\Big(\f{\sqrt t}{x}\Big)^{-1-2\nu}\f{dt}{t}\\
		&\sim \int_{x^2}^\vc \f{x^{1+2\nu}}{t^{1+\nu}}\f{dt}{t}\sim \f{1}{x}.
	\end{aligned}
	\]
	For the second part, using Proposition \ref{prop- difference two riesz kernels},
	\[
	\begin{aligned}
		\int^{x^2}_0 t^{k/2}|\delta_\nu^kp_t^{\nu+1}(x,y)- \delta_{\nu+1}^kp_t^\nu(x,y)|\f{dt}{t}&\lesi \int^{x^2}_0\f{1}{x}\exp\Big(-\f{|x-y|^2}{ct}\Big)\f{dt}{t} \\
		&\lesi \int^{x^2}_0\f{1}{x}\Big(\f{\sqrt t}{|x-y|}\Big)^{\epsilon}\f{dt}{t}\\
		&\lesi  \f{1}{x}\Big(\f{x}{|x-y|}\Big)^{\epsilon}.
	\end{aligned}
	\]
	
	\medskip
	
	If $x\ge 2y$, then $|x-y|\sim x$. This, together with Theorem \ref{thm-delta k pt nu}, implies
	\[
	\begin{aligned}
		\int_0^\vc t^{k/2}|\delta_\nu^kp_t^{\nu+1}(x,y)&- \delta_{\nu+1}^kp_t^\nu(x,y)|\f{dt}{t}\\
		&\le 
		\int_0^\vc t^{k/2}|\delta_\nu^kp_t^{\nu+1}(x,y)|\f{dt}{t}+
		\int_0^\vc t^{k/2}| \delta_{\nu+1}^kp_t^\nu(x,y)|\f{dt}{t}\\
		&\lesi \int_0^\vc \f{1}{\sqrt t}\exp\Big(-\f{x^2}{ct}\Big)\Big[\Big(1+\f{\sqrt t}{x}\Big)^{-\nu-1/2}+\Big(1+\f{\sqrt t}{x}\Big)^{-\nu-3/2}\Big]\f{dt}{t}\\
		&\lesi \int_0^\vc \f{1}{\sqrt t}\exp\Big(-\f{x^2}{ct}\Big) \Big(1+\f{\sqrt t}{x}\Big)^{-\nu-1/2} \f{dt}{t}\\
		&\lesi \f{1}{x}.
	\end{aligned}
	\]
	Similarly, if $y\ge 2x$, then we have
	\[
	\int_0^\vc t^{k/2}|\delta_\nu^kp_t^{\nu+1}(x,y)- \delta_{\nu+1}^kp_t^\nu(x,y)|\f{dt}{t}\lesi \f{1}{y}.
	\]
	This completes the proof of \eqref{eq1- difference Riesz kernel}.
	
	For the second part, from the inequality \eqref{eq1- difference Riesz kernel} we have,
	\[
	\begin{aligned}
		\int_0^\vc t^{k/2}&\big|[\delta_\nu^k e^{-t\Delta_\nu}- \delta_{\nu+1}^ke^{-t\Delta_{\nu+1}}]f(x)\big|\f{dt}{t} \\
		&=\int_0^\vc\int_0^\vc t^{k/2}\big|[\delta_\nu^kp_t^{\nu}(x,y)- \delta_{\nu+1}^kp_t^{\nu+1}(x,y)]f(y)\big|\f{dt}{t}dy\\
		&\lesi \int_{x/2}^{2x}\Big[\f{1}{x} + \f{1}{x}\Big(\f{x}{|x-y|}\Big)^{\epsilon}\Big]|f(y)|dy + \int_0^{x/2}\f{1}{x}|f(y)|dy + \int_{2x}^\vc \f{1}{y}|f(y)|dy\\
		&=: T_1f(x) + T_2f(x) +T_3f(x). 
	\end{aligned}
	\]
	Obviously,
	\[
	T_2f(x)\le 	2\int_0^{2x}\f{1}{2x}|f(y)|dy \le 2\mathcal Mf(x),
	\]
	which, together with the $L^p$-boundedness of $\mathcal M$, implies that $T_2$ is bounded on $L^p(\mathbb R_+)$.
	
	For $T_3$, let $g\in L^{p'}(\mathbb R_+)$. Then we have
	\[
	\begin{aligned}
		\langle T_3f, g\rangle &=\int_0^\vc \int_{2x}^\vc \f{1}{y}|f(y)|g(x)dydx\\
		&\lesi \int_0^\vc |f(y)|\int_{0}^{y/2} \f{1}{y}|g(x)|dxdy.
	\end{aligned}
	\]
	Similarly,
	\[
	\int_{0}^{y/2} \f{1}{y}|g(x)|dx\lesi \mathcal Mg(y).
	\]
	Hence,
	\[
	\begin{aligned}
		\langle T_3f, g\rangle  
		&\lesi \int_0^\vc |f(y)| \mathcal Mg(y) dy\\
		&\lesi \|f\|_p \|\mathcal Mg\|_{p'}\\
		&\lesi \|f\|_p \|g\|_{p'}.
	\end{aligned}
	\]
	It follows that $T_3$ is bounded on $L^p(\mathbb R_+)$.
	
	It remains to show that $T_1$ is bounded on $L^p(\mathbb R_+)$. To do this, fix    $1<r<p$ and $\epsilon<1/r'$. Then we have
	\[
	\begin{aligned}
		T_1 f(x) &\lesi \f{1}{x}\int_{x/2}^{2x}|f(y)|dy + \f{1}{x}\int_{x/2}^{2x}\Big(\f{x}{|x-y|}\Big)^\epsilon |f(y)|dy.
	\end{aligned}
	\]
	Obviously,
	\[
	\f{1}{x}\int_{x/2}^{2x}|f(y)|dy\lesi \mathcal Mf(x),
	\]
	and hence 
	$$
	f\mapsto \f{1}{x}\int_{x/2}^{2x}|f(y)|dy
	$$
	is bounded on $L^p(\mathbb R_+)$.
	
	For the second term, by H\"older's inequality and the fact $r'\epsilon<1$,
	\[
	\begin{aligned}
		\f{1}{x}\int_{x/2}^{2x}\Big(\f{x}{|x-y|}\Big)^\epsilon |f(y)|dy&\lesi \Big(\f{1}{x}\int_{x/2}^{2x} |f(y)|^{r}dy\Big)^{1/r}\Big(\f{1}{x}\int_{x/2}^{2x} \Big(\f{x}{|x-y|}\Big)^{r'\epsilon}dy\Big)^{1/r'}\\
		&\lesi \Big(\f{1}{x}\int_{x/2}^{2x} |f(y)|^{r}dy\Big)^{1/r}\\
		&\lesi \mathcal M_rf,
	\end{aligned}
	\]
	which implies the operator
	\[
	f\mapsto \f{1}{x}\int_{x/2}^{2x}\Big(\f{x}{|x-y|}\Big)^\epsilon |f(y)|dy
	\]
	is bounded on $L^p(\mathbb R_+)$.
	
	This completes our proof.
\end{proof}

	\subsection{The case $n\ge 2$} For  $\nu=(\nu_1,\ldots,\nu_n)\in (-1/2,\vc)^n$, recall that
	\[
	\nu_{\min} =  \min\{\nu_j: j =1,\ldots, n\}.
	\]

From Theorems	\ref{thm-delta k pt nu} and \ref{thm-partial derivative of delta k pt nu}  we have the following two propositions.	
	\begin{prop}
		\label{prop- delta k pt d>2} Let $\nu\in (-1/2,\vc)^n$ and $\ell\in \mathbb N^n$. Then we have
		\begin{equation*}
			| \delta_\nu^\ell  p_t^\nu(x,y)|\lesi   \f{1}{t^{(n+|\ell|)/2}}\exp\Big(-\f{|x-y|^2}{ct}\Big)\Big(1+\f{\sqrt t}{\rho(x)}+\f{\sqrt t}{\rho(y)}\Big)^{-(\nu_{\min}+1/2)}
		\end{equation*}
		for $t>0$ and $x,y\in \mathbb R^n_+$.
	\end{prop}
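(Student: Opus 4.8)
The plan is to reduce everything to the one–dimensional estimate of Theorem~\ref{thm-delta k pt nu} by exploiting the tensor–product structure of the heat semigroup. By \eqref{eq- prod ptnu} we have $p_t^\nu(x,y)=\prod_{j=1}^n p_t^{\nu_j}(x_j,y_j)$, and since $\delta_\nu^\ell=\delta_{\nu_n}^{\ell_n}\cdots\delta_{\nu_1}^{\ell_1}$ with each $\delta_{\nu_j}$ differentiating only in the $x_j$ variable, the kernel factorizes as
\[
\delta_\nu^\ell p_t^\nu(x,y)=\prod_{j=1}^n \delta_{\nu_j}^{\ell_j} p_t^{\nu_j}(x_j,y_j).
\]
First I would apply Theorem~\ref{thm-delta k pt nu} to each one–dimensional factor, bounding $|\delta_{\nu_j}^{\ell_j}p_t^{\nu_j}(x_j,y_j)|$ by $t^{-(\ell_j+1)/2}\exp(-|x_j-y_j|^2/(ct))(1+\sqrt t/x_j)^{-\nu_j-1/2}(1+\sqrt t/y_j)^{-\nu_j-1/2}$.

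Multiplying over $j$ is largely bookkeeping: the time powers combine to $t^{-\sum_j(\ell_j+1)/2}=t^{-(n+|\ell|)/2}$, and the Gaussian factors combine through $\sum_j|x_j-y_j|^2=|x-y|^2$ into $\exp(-|x-y|^2/(ct))$. The one substantive point is to dominate the remaining product
\[
\prod_{j=1}^n(1+\sqrt t/x_j)^{-\nu_j-1/2}(1+\sqrt t/y_j)^{-\nu_j-1/2}
\]
by the single factor $\big(1+\sqrt t/\rho(x)+\sqrt t/\rho(y)\big)^{-(\nu_{\min}+1/2)}$.

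For this reduction I would first note that each factor is $\le 1$, since $\nu_j+1/2>0$ and $\sqrt t/x_j,\sqrt t/y_j>0$; hence the product is controlled by keeping only the factor attached to the index $j_0$ minimizing $x_j$ (and, separately, the index minimizing $y_j$). By the definition \eqref{eq- critical function} of $\rho$ we have $\min_j x_j=16\rho(x)$, so $1+\sqrt t/x_{j_0}=1+\tfrac1{16}\sqrt t/\rho(x)\gtrsim 1+\sqrt t/\rho(x)$. Combining this with the fact that the negative power $-\nu_{j_0}-1/2$ reverses the inequality, then using monotonicity of $a\mapsto(1+s)^{-a}$ for $s\ge 0$ together with $\nu_{j_0}\ge\nu_{\min}$, yields $(1+\sqrt t/x_{j_0})^{-\nu_{j_0}-1/2}\lesi(1+\sqrt t/\rho(x))^{-\nu_{\min}-1/2}$, and likewise for $y$. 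Finally I would merge the two resulting factors using $(1+a)(1+b)\ge 1+a+b$, which after raising to the negative power $-(\nu_{\min}+1/2)$ gives exactly $\big(1+\sqrt t/\rho(x)+\sqrt t/\rho(y)\big)^{-(\nu_{\min}+1/2)}$.

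I do not expect a genuine obstacle here: the proposition is a direct consequence of Theorem~\ref{thm-delta k pt nu} combined with elementary inequalities, and the only care required is in tracking how the per–coordinate exponents $\nu_j+1/2$ collapse onto the single worst exponent $\nu_{\min}+1/2$ and how $\rho$ enters through $\min_j x_j$. Note that Theorem~\ref{thm-partial derivative of delta k pt nu}, cited alongside Theorem~\ref{thm-delta k pt nu}, is not needed for this estimate; it serves the companion proposition involving the ordinary derivatives $\partial_x^k$.
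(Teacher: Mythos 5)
Your proof is correct and follows exactly the route the paper intends: the paper states this proposition without proof as a direct consequence of Theorem~\ref{thm-delta k pt nu} via the tensor-product factorization \eqref{eq- prod ptnu}, and your argument supplies precisely the missing bookkeeping, including the correct handling of the collapse of the exponents $\nu_j+1/2$ onto $\nu_{\min}+1/2$ through the coordinate realizing $\min_j x_j=16\rho(x)$ and the merging inequality $(1+a)(1+b)\ge 1+a+b$.
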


	\begin{prop}\label{prop-gradient x y d>2}
		Let $\nu\in (-1/2,\vc)^n$ and $k, \ell\in \mathbb N^n$.  Then, we have
		\begin{equation*}
			\begin{aligned}
				|\partial_{x}^k \delta_\nu^\ell p_t^\nu(x,y)|&\lesi \Big[\f{1}{t^{|k|/2}}\ + \f{1}{\rho(x)^{|k|}}\Big]\f{1 }{ t^{(n+|\ell|)/2}}\exp\Big(-\f{|x-y|^2}{ct}\Big)\Big(1+\f{\sqrt t}{\rho(x)}+\f{\sqrt t}{\rho(y)}\Big)^{-(\nu_{\min}+1/2)}
			\end{aligned}
		\end{equation*}
		and
		\begin{equation*}
			\begin{aligned}
				|\partial_{y}^k \delta_\nu^\ell p_t^\nu(x,y)|&\lesi \Big[\f{1}{t^{|k|/2}}\ + \f{1}{\rho(y)^{|k|}}\Big]\f{1 }{ t^{(n+\ell)/2}}\exp\Big(-\f{|x-y|^2}{ct}\Big)\Big(1+\f{\sqrt t}{\rho(x)}+\f{\sqrt t}{\rho(y)}\Big)^{-(\nu_{\min}+1/2)}
			\end{aligned}
		\end{equation*}
		for all $t>0$ and all $x, y\in \mathbb R^n_+$.
	\end{prop}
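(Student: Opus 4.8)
The plan is to exploit the tensor–product structure of the heat kernel. Since $p_t^\nu(x,y)=\prod_{j=1}^n p_t^{\nu_j}(x_j,y_j)$ by \eqref{eq- prod ptnu} and each operator $\delta_{\nu_j}$ (hence each power $\delta_{\nu_j}^{\ell_j}$) acts only in the variable $x_j$, the kernel factorizes as
\[
\delta_\nu^\ell p_t^\nu(x,y) = \prod_{j=1}^n \delta_{\nu_j}^{\ell_j} p_t^{\nu_j}(x_j,y_j).
\]
I would therefore apply the one–dimensional estimate of Theorem \ref{thm-delta k pt nu} to each factor and multiply. The powers of $t$ combine to $t^{-(|\ell|+n)/2}$ because $\sum_j(\ell_j+1)=|\ell|+n$, and the Gaussians combine to $\exp(-|x-y|^2/(ct))$ since $\sum_j|x_j-y_j|^2=|x-y|^2$. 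What remains is to control the product
\[
P=\prod_{j=1}^n \Big(1+\f{\sqrt t}{x_j}\Big)^{-\nu_j-1/2}\Big(1+\f{\sqrt t}{y_j}\Big)^{-\nu_j-1/2}
\]
by $\big(1+\tfrac{\sqrt t}{\rho(x)}+\tfrac{\sqrt t}{\rho(y)}\big)^{-(\nu_{\min}+1/2)}$.

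This last reduction is the main obstacle; everything else is bookkeeping. I would handle it by a coordinate–selection argument. Since $\nu_j+1/2>0$, every factor of $P$ is $\le 1$, so $P$ is bounded by any single factor. Writing $1+\tfrac{\sqrt t}{\rho(x)}+\tfrac{\sqrt t}{\rho(y)}\sim\max\{1,\tfrac{\sqrt t}{\rho(x)},\tfrac{\sqrt t}{\rho(y)}\}$, if the maximum is $1$ then $P\le 1$ and there is nothing to prove, so assume it equals, say, $\tfrac{\sqrt t}{\rho(x)}\ge 1$. Choosing the index $j_0$ with $x_{j_0}=\min_k x_k=16\rho(x)$ gives $1+\tfrac{\sqrt t}{x_{j_0}}=1+\tfrac{\sqrt t}{16\rho(x)}\sim \tfrac{\sqrt t}{\rho(x)}$, whence
\[
P\le \Big(1+\f{\sqrt t}{x_{j_0}}\Big)^{-\nu_{j_0}-1/2}\sim\Big(\f{\sqrt t}{\rho(x)}\Big)^{-\nu_{j_0}-1/2}\le \Big(\f{\sqrt t}{\rho(x)}\Big)^{-\nu_{\min}-1/2},
\]
where the last step uses $\nu_{j_0}\ge\nu_{\min}$ together with the fact that the base $\tfrac{\sqrt t}{\rho(x)}\ge 1$ (so a more negative exponent only decreases the value). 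Since $\tfrac{\sqrt t}{\rho(x)}$ is the maximum, the right-hand side is $\sim\big(1+\tfrac{\sqrt t}{\rho(x)}+\tfrac{\sqrt t}{\rho(y)}\big)^{-(\nu_{\min}+1/2)}$. The case where the maximum is $\tfrac{\sqrt t}{\rho(y)}$ is symmetric, and this proves Proposition \ref{prop- delta k pt d>2}.

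For Proposition \ref{prop-gradient x y d>2} I would argue identically, now starting from the factorization
\[
\partial_x^k\delta_\nu^\ell p_t^\nu(x,y)=\prod_{j=1}^n \partial_{x_j}^{k_j}\delta_{\nu_j}^{\ell_j}p_t^{\nu_j}(x_j,y_j)
\]
and applying Theorem \ref{thm-partial derivative of delta k pt nu} to each factor. The $t$-powers, Gaussians, and $(1+\sqrt t/x_j)$-factors collapse to the $\rho$-form exactly as above, so the only new point is the product of the bracket factors $\prod_j\big(t^{-k_j/2}+x_j^{-k_j}\big)$, which I would bound by $t^{-|k|/2}+\rho(x)^{-|k|}$: since $x_j\ge 16\rho(x)$ each factor is $\lesi t^{-k_j/2}+\rho(x)^{-k_j}$, and expanding the product yields a sum of terms $t^{-a/2}\rho(x)^{-b}$ with $a+b=|k|$, each of which is $\lesi t^{-|k|/2}+\rho(x)^{-|k|}$ by comparing $\sqrt t$ with $\rho(x)$ — precisely the elementary inequality already invoked in the proof of Theorem \ref{thm-partial derivative of delta k pt nu}. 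The $y$-estimate follows by symmetry, completing the proof.
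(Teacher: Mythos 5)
Your proposal is correct and follows exactly the route the paper intends: the paper derives Proposition \ref{prop-gradient x y d>2} directly from Theorems \ref{thm-delta k pt nu} and \ref{thm-partial derivative of delta k pt nu} via the tensor-product factorization of $p_t^\nu$, giving no further details. Your coordinate-selection argument for dominating $\prod_j(1+\sqrt t/x_j)^{-\nu_j-1/2}(1+\sqrt t/y_j)^{-\nu_j-1/2}$ by $\big(1+\sqrt t/\rho(x)+\sqrt t/\rho(y)\big)^{-(\nu_{\min}+1/2)}$, and the expansion of $\prod_j\big(t^{-k_j/2}+x_j^{-k_j}\big)$, correctly supply the bookkeeping the paper omits.
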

	
	From Corollary \ref{cor1}, we have:
	\begin{cor}\label{cor1  d ge 2}
		Let $\nu\in (-1/2,\vc)^n$. Then for each $k, \vec{M}=(M,\ldots, M)\in \mathbb N^n$ we have
		\begin{equation}\label{eq1-cor2}
			|\delta^k_\nu \Delta^M_\nu p_t^\nu(x,y)|\lesi \f{1}{t^{(|k|+2M+n)/2}}\exp\Big(-\f{|x-y|^2}{ct}\Big)
		\end{equation}
		and
		\begin{equation}\label{eq2-cor2}
			|\Delta_\nu^M (\delta^*_\nu)^kp_t^{\nu+k+2\vec M}(x,y)|\lesi \f{1}{t^{(|k|+2M+n)/2}}\exp\Big(-\f{|x-y|^2}{ct}\Big)
		\end{equation}
		for all $x,y\in \mathbb R^n_+$ and $t>0$.
	\end{cor}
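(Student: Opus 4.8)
The plan is to reduce both estimates to their one–dimensional counterparts in Corollary~\ref{cor1} by exploiting the tensor–product structure \eqref{eq- prod ptnu}. Write $p_t^\nu(x,y)=\prod_{j=1}^n p_t^{\nu_j}(x_j,y_j)$ and recall that $\delta_{\nu_j}$, $\delta_{\nu_j}^*$ and $\Delta_{\nu_j}$ act only on the $j$-th variable, so operators carrying different indices commute. Since $\Delta_\nu=\sum_{j=1}^n\Delta_{\nu_j}$, the multinomial theorem gives
\[
\Delta_\nu^M=\sum_{|\beta|=M}\binom{M}{\beta}\prod_{j=1}^n\Delta_{\nu_j}^{\beta_j},
\]
a finite sum (with $\binom{M+n-1}{n-1}$ terms and constant multinomial coefficients). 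Because $\delta_\nu^k=\prod_j\delta_{\nu_j}^{k_j}$ and $(\delta_\nu^*)^k=\prod_j(\delta_{\nu_j}^*)^{k_j}$ also factorize through the variables, I can move each one–variable factor next to the matching $\Delta_{\nu_j}^{\beta_j}$ (passing harmlessly through the factors with other indices), so that for each fixed $\beta$ the resulting operator is a genuine product of one–variable operators acting on the corresponding one–dimensional kernels.

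For \eqref{eq1-cor2} this yields
\[
\delta_\nu^k\Delta_\nu^M p_t^\nu(x,y)=\sum_{|\beta|=M}\binom{M}{\beta}\prod_{j=1}^n\delta_{\nu_j}^{k_j}\Delta_{\nu_j}^{\beta_j}p_t^{\nu_j}(x_j,y_j).
\]
I then bound each factor by \eqref{eq1-cor1}, applied with parameters $\nu_j,k_j,\beta_j$, by $t^{-(k_j+2\beta_j+1)/2}\exp(-|x_j-y_j|^2/ct)$. Multiplying over $j$ and using $\sum_j(k_j+2\beta_j+1)=|k|+2M+n$ together with $\sum_j|x_j-y_j|^2=|x-y|^2$ gives the claimed bound for each $\beta$; summing the finitely many terms completes \eqref{eq1-cor2}.

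For \eqref{eq2-cor2} I proceed identically to obtain
\[
\Delta_\nu^M(\delta_\nu^*)^k p_t^{\nu+k+2\vec M}(x,y)=\sum_{|\beta|=M}\binom{M}{\beta}\prod_{j=1}^n\Delta_{\nu_j}^{\beta_j}(\delta_{\nu_j}^*)^{k_j}p_t^{\nu_j+k_j+2M}(x_j,y_j).
\]
Here a subtlety appears: after the expansion the $j$-th factor carries the power $\beta_j\le M$ of $\Delta_{\nu_j}$, while the corresponding kernel is shifted by the full amount $2M$, so \eqref{eq2-cor1} cannot be quoted verbatim. I would instead repeat the argument used for \eqref{eq2-cor1} factor by factor: using $\delta_{\nu_j}=\delta_{\nu_j+k_j+2M}+\f{k_j+2M}{x_j}$ and $\delta_{\nu_j}^*=-\delta_{\nu_j+k_j+2M}-\f{2\nu_j+k_j+2M+1}{x_j}$, one expands
\[
\Delta_{\nu_j}^{\beta_j}(\delta_{\nu_j}^*)^{k_j}=\sum_{i=0}^{2\beta_j+k_j}\f{c_i}{x_j^i}\,\delta_{\nu_j+k_j+2M}^{\,2\beta_j+k_j-i},
\]
whose differential parameter now matches the kernel $p_t^{\nu_j+k_j+2M}$.

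Applying Theorem~\ref{thm-delta k pt nu} with parameter $\nu_j+k_j+2M$ produces a decay factor $(1+\sqrt t/x_j)^{-(\nu_j+k_j+2M)-1/2}$, and the main (mild) obstacle is to verify that this decay absorbs every negative power $x_j^{-i}$. This is where the oversized shift is exploited: since $i\le 2\beta_j+k_j\le 2M+k_j\le \nu_j+k_j+2M+1/2$, a two–case analysis according to whether $x_j<\sqrt t$ or $x_j\ge\sqrt t$ gives $x_j^{-i}(1+\sqrt t/x_j)^{-(\nu_j+k_j+2M)-1/2}\lesi t^{-i/2}$, exactly as in the proof of \eqref{eq2-cor1}. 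This leaves the factor bound $t^{-(2\beta_j+k_j+1)/2}\exp(-|x_j-y_j|^2/ct)$; taking the product over $j$ and summing over the finitely many $\beta$ yields \eqref{eq2-cor2}. The only genuinely new point compared with the scalar case is precisely this shift mismatch, and it is harmless because $2M\ge 2\beta_j$ only strengthens the available decay.
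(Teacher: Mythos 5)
Your proposal is correct and follows essentially the same route as the paper, which states the corollary as an immediate consequence of the one-dimensional Corollary \ref{cor1} via the product structure \eqref{eq- prod ptnu}. You are in fact slightly more careful than the paper: the mismatch between the exponent $\beta_j\le M$ arising from the multinomial expansion and the fixed shift $2M$ in the kernel $p_t^{\nu_j+k_j+2M}$ is a real (if harmless) point that the paper glosses over, and your verification that $i\le 2\beta_j+k_j\le 2M+k_j<\nu_j+k_j+2M+1/2$ ensures the decay factor absorbs every power $x_j^{-i}$ is exactly the right fix.
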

	
\section{Hardy spaces associated to the Laguerre operator and its duality}

This section is dedicated to proving Theorem \ref{mainthm2s} and Theorem \ref{mainthm-dual}.

From the definition of the critical function $\rho$ in \eqref{eq- critical function}, it is easy to see that if $y\in B(x,4\rho(x))$, then $\rho(x)\sim \rho(y)$. We will use this frequently without any explanation.

\bigskip

We now give the proof of Theorem \ref{mainthm2s}. 
\begin{proof}[Proof of Theorem \ref{mainthm2s}:]
	We divide the proof into two steps: $H^{p}_{\rho}(\mathbb{R}^n_+)\hookrightarrow H^p_{\Delta_\nu}(\mathbb{R}^n_+)$ and $H^p_{\Delta_\nu}(\mathbb{R}^n_+)\hookrightarrow H^{p}_{\rho}(\mathbb{R}^n_+)$.

	\bigskip
	
	\noindent \textbf{Step 1: Proof of $H^{p}_{{\rm at},\rho}(\mathbb{R}^n_+)\hookrightarrow H^p_{\Delta_\nu}(\mathbb{R}^n_+)$.} Fix $\f{n}{n+\gamma_\nu}<p\le 1$. Let $a$ be a $(p,\rho)$ atom associated with a ball $B:=B(x_0,r)$. By Remark \ref{rem1}, we might assume that $r\le \rho(x_0)$. By the definition of $H^p_{\Delta_\nu}(\mathbb{R}^n_+)$, it suffices to prove that  
	\[
	\|\mathcal M_{\Delta_\nu} a\|_p \lesi 1.
	\]
	To do this, we write
	\[
	\begin{aligned}
		\|\mathcal M_{\Delta_\nu} a\|_p&\lesi \|\mathcal M_{\Delta_\nu} a\|_{L^p(4B)} +\|\mathcal M_{\Delta_\nu} a\|_{L^p((4B)^c)}\\
		&\lesi E_1 + E_2.
	\end{aligned}
	\]
	Since the kernel of $e^{-t\Delta_\nu}$ satisfies a Gaussian upper bound (see Theorem \ref{thm-heat kernel}), the maximal function $\mathcal M_{\Delta_\nu}$ is bounded on $L^q(\Rn_+), 1<q<\vc$. This, along with the H\"older  inequality, implies
	\[
	\begin{aligned}
		\|\mathcal M_{\Delta_\nu} a\|_{L^p(4B)}&\lesi |4B|^{1/p-1/2} \|\mathcal M_{\Delta_\nu} a\|_{L^2(4B)}\\
		&\lesi |4B|^{1/p-1/2} \|a\|_{L^2(B)}\\
		&\lesi 1.
	\end{aligned}
	\]
	It remains to take care of the second term $E_2$. We now consider two cases.
	\medskip
	
	\textbf{Case 1: $r=\rho(x_0)$.} By Theorem  \ref{thm-heat kernel}, for $x\in (4B)^c$,
	\[
	\begin{aligned}
		|\mathcal M_{\Delta_\nu} a(x)|\lesi \sup_{t>0}\int_{B}\f{1}{t^{n/2}}\exp\Big(-\f{|x-y|^2}{ct}\Big)\Big(\f{\rho(y)}{\sqrt t}\Big)^{\gamma_\nu}|a(y)|dy,
	\end{aligned}
	\]
	where $\gamma_\nu=\nu_{\min}+1/2$.

	Since $\rho(y)\sim \rho(x_0)$ for $y\in B$ and $|x-y|\sim |x-x_0|$ for $x\in (4B)^c$ and $y\in B$, we further imply
	\[
	\begin{aligned}
		\mathcal M_{\Delta_\nu} a(x)&\lesi \sup_{t>0}\int_{B}\f{1}{t^{n/2}}\exp\Big(-\f{|x-x_0|^2}{ct}\Big)\Big(\f{\rho(x_0)}{\sqrt t}\Big)^{\gamma_\nu}|a(y)|dy\\
		&\lesi \Big(\f{\rho(x_0)}{|x-x_0|}\Big)^{\gamma_\nu}\f{1}{|x-x_0|^n}\|a\|_1\\
		&\lesi \Big(\f{r}{|x-x_0|}\Big)^{\gamma_\nu}\f{1}{|x-x_0|^n}|B|^{1-1/p}.
	\end{aligned}
	\]
	Therefore,
	\[
	\begin{aligned}
		\|\mathcal M_{\Delta_\nu} a\|_{L^p((4B)^c)}&\lesi |B|^{1-1/p} \Big[\int_{(4B)^c}\Big(\f{r}{|x-x_0|}\Big)^{p\gamma_\nu}\f{1}{|x-x_0|^{np}}dx\Big]^{1/p}\\
		&\lesi 1,
	\end{aligned}
	\]
	as long as $\f{n}{n+\gamma_\nu}<p\le 1$.
	
	\bigskip
	

\textbf{Case 2: $r<\rho(x_0)$.} Using the cancellation property $\int a(x) x^\alpha dx= 0$ for all $|\alpha|\le \lfloor n(1/p-1)\rfloor=:N_p$ and the Taylor expansion, we have
\[
\begin{aligned}
	\sup_{t>0} |e^{-t\Delta_\nu}a(x)|&= \sup_{t>0}\Big|\int_{B}[p_t^\nu(x,y)-p_t^\nu(x,x_0)]a(y)dy\Big|\\
	&= \sup_{t>0}\Big|\int_{B}\Big[p_t^\nu(x,y)-\sum_{|\alpha|\le N_p}\f{\partial_y^\alpha p_t^\nu(x,x_0)}{\alpha!}(y-x_0)^\alpha\Big]a(y)dy\Big|\\
	&= \sup_{t>0}\Big|\int_{B} \sum_{|\alpha|= N_p+1}\f{\partial_y^\alpha p_t^\nu(x,x_0+\theta(y-x_0))}{\alpha!}(y-x_0)^\alpha a(y)dy\Big|
\end{aligned}
\]
for some $\theta \in (0,1)$.

Since $y\in B$ we have $\rho(x_0+\theta(y-x_0))\sim \rho(x_0)$ and $|x-[x_0+\theta(y-x_0)]|\sim |x-x_0|$ for all $x\in (4B)^c$, $y\in B$ and $\theta \in (0,1)$, by  Proposition \ref{prop-gradient x y d>2} we further imply, for $x\in (4B)^c$,
\[
\begin{aligned}
	\sup_{t>0} |e^{-t\Delta_\nu}a(x)|&\lesi  \sup_{t>0}\int_{B}\Big(\f{|y-y_0|}{\sqrt t}+\f{|y-y_0|}{\rho(x_0)}\Big)^{N_p+1}\f{1}{t^{n/2}}\exp\Big(-\f{|x-y|^2}{ct}\Big)\Big(1+\f{\sqrt t}{\rho(x_0)}\Big)^{-\gamma_\nu} |a(y)|dy\\
	&\sim  \sup_{t>0}\int_{B}\Big(\f{r}{\sqrt t}+\f{r}{\rho(x_0)}\Big)^{N_p+1}\f{1}{t^{n/2}}\exp\Big(-\f{|x-x_0|^2}{ct}\Big)\Big(1+\f{\sqrt t}{\rho(x_0)}\Big)^{-\gamma_\nu} \|a\|_1\\
	&\lesi   \sup_{t>0}  \Big(\f{r}{\sqrt t}\Big)^{N_p+1} \f{1}{t^{n/2}}\exp\Big(-\f{|x-x_0|^2}{ct}\Big) \|a\|_1\\
	& \ \ +   \sup_{t>0}  \Big(\f{r}{\rho(x_0)}\Big)^{N_p+1} \f{1}{t^{n/2}}\exp\Big(-\f{|x-x_0|^2}{ct}\Big)\Big(1+\f{\sqrt t}{\rho(x_0)}\Big)^{-\gamma_\nu} \|a\|_1\\	
	&= F_1 + F_2.
\end{aligned}
\]
For the term $F_1$, it is straightforward to see that 
\[
\begin{aligned}
	F_1&\lesi \Big(\f{r}{|x-x_0|}\Big)^{N_p+1}\f{1}{|x-x_0|^n}|B|^{1-1/p}\\
	&\lesi \Big(\f{r}{|x-x_0|}\Big)^{(N_p+1)\wedge \gamma_\nu}\f{1}{|x-x_0|^n}|B|^{1-1/p},	
\end{aligned}
\]
where in the last inequality we used the fact $r\le |x-x_0|$ for $x\in (4B)^c$.

For $F_2$, since $r<\rho(x_0)$, we have
\[
\begin{aligned}
	F_2&\lesi \sup_{t>0} \Big(\f{r}{\rho(x_0)}\Big)^{(N_p+1)\wedge \gamma_\nu}\f{1}{t^{n/2}}\exp\Big(-\f{|x-x_0|^2}{ct}\Big)\Big(\f{\rho(x_0)}{\sqrt t}\Big)^{(N_p+1)\wedge \gamma_\nu} \|a\|_1\\
	&\lesi \Big(\f{r}{|x-x_0|}\Big)^{^{(N_p+1)\wedge \gamma_\nu}}\f{1}{|x-x_0|^n}|B|^{1-1/p}.
\end{aligned}
\]
Taking this and the estimate of $F_1$ into account then we obtain
\[
\sup_{t>0} |e^{-t\Delta_\nu}a(x)|\lesi \Big(\f{r}{|x-x_0|}\Big)^{^{(N_p+1)\wedge \gamma_\nu}}\f{1}{|x-x_0|^n}|B|^{1-1/p}.
\]
Therefore,
\[
|\mathcal M_{\Delta_\nu} a(x)| \lesi \Big(\f{r}{|x-x_0|}\Big)^{^{(N_p+1)\wedge \gamma_\nu}}\f{1}{|x-x_0|}|B|^{1-1/p},
\]
which implies
\[
\|\mathcal M_{\Delta_\nu} a\|_p\lesi 1,
\]
as long as $\f{n}{n+\gamma_\nu}< p\le 1$.

This completes the proof of the first step.

\bigskip

\noindent \textbf{Step 2: Proof of $H^p_{\Delta_\nu}(\mathbb{R}^n_+)\hookrightarrow  H^{p}_{\rho}(\mathbb{R}^n_+)$.} 

Recall from \cite{SY} that for $p\in (0,1]$ and $N\in \mathbb N$, a function $a$ is call a $(p,N)_{\Delta_\nu}$ atom associated to a ball $B$  if  
\begin{enumerate}[{\rm (i)}]
	\item  $a=\Delta_\nu^N b$;
	\item $\supp \Delta_\nu^{k}b\subset B, \ k=0, 1, \dots, M$;
	\item $\|\Delta_\nu^{k}b\|_{L^\vc(\mathbb{R}^n_+)}\leq
	r_B^{2(N-k)}|B|^{-\f{1}{p}},\ k=0,1,\dots,N$.
\end{enumerate}

Let $f\in H^p_{\Delta_\nu}(\mathbb{R}^n_+)\cap L^2(\Rn_+)$. Since $\Delta_\nu$ is a nonnegative self-adjoint operator and satisfies the Gaussian upper bound, by Theorem 1.3 in \cite{SY}, we can write $f = \sum_{j}\lambda_ja_j$ in $L^2(\Rn_+)$, where 
$\sum_{j}|\lambda_j|^p\sim \|f\|^p_{H^p_{\Delta_\nu}(\mathbb{R}^n_+)}$ and each $a_j$ is a $(p,N)_{\Delta_\nu}$ atom with $N>n(\f{1}{p}-1)$.  Therefore, it suffices to prove that $a\in H^p_{\rho}(\Rn_+)$ for any $(p,N)_{\Delta_\nu}$ atom associated to a ball $B$ with $N>n(\f{1}{p}-1)$. 

If $r_B\ge \rho(x_B)$, then from (iii), $a$ is also a $(p,\rho)$ atom and hence $a\in H^p_{\rho}(\Rn_+)$. Hence, it remains to consider the case  $r_B<\rho(x_B)$. 

We first claim that for any multi-index $\alpha$ with $|\alpha|<N$, we have 
\begin{equation}\label{eq-integral of a}
	\Big|\int (x-x_B)^\alpha a(x)\,dx\Big| \le |B|^{1-\f{1}{p}} r_B^{|\alpha|} \Big(\f{r_B}{\rho_B}\Big)^{N-|\alpha|}.
\end{equation} 
Indeed, from (i) we have
\[
\begin{aligned}
	\Big|\int (x-x_B)^\alpha a(x)\,dx\Big|&=\Big|\int_B (x-x_B)^\alpha \Delta_\nu^N b (x)\,dx\Big|\\
	&=\Big|\int_B \Delta_\nu^N(x-x_B)^\alpha  b (x)\,dx\Big|\\
	&\le  \int_B |\Delta_\nu^N(x-x_B)^\alpha| |b (x)|\,dx.
\end{aligned}
\]
Note that 
\[
\Delta_\nu^N(x-x_B)^\alpha=\sum_{|\gamma|+|\beta| =2N \atop \beta\le \alpha} \f{c_{\gamma,\beta}}{x^\gamma} \partial^\beta (x-x_B)^\alpha,
\]
where $c_{\gamma,\beta}$ are constants.

Since we have $|x|\ge \rho(x_B)$ for $x\in B$ and $r_B<\rho(x_B)$, we further imply, for $x\in B$,
\[
\begin{aligned}
	|\Delta_\nu^N(x-x_B)^\alpha|&\lesi  \sum_{|\gamma|+|\beta| =2N\atop \beta\le \alpha} \f{1}{\rho(x_B)^{|\gamma|}}|\partial^\beta (x-x_B)^\alpha|\\
	&\lesi  \sum_{|\gamma|+|\beta| =2N\atop \beta\le \alpha} \f{r_B^{|\alpha-\beta|}}{\rho(x_B)^{|\gamma|}}\\
	&\lesi     \sum_{|\gamma|+|\beta| =2N\atop \beta\le \alpha} \f{1}{\rho(x_B)^{|\gamma|+|\beta|-|\alpha|}}\\
	&\sim \f{1}{\rho(x_B)^{2N-|\alpha|}}.
\end{aligned}
\]
Hence,
\[
\begin{aligned}
	\Big|\int (x-x_B)^\alpha a(x)\,dx\Big|&\lesi \f{1}{\rho(x_B)^{2N-|\alpha|}}\|b\|_{1}\\
	&\lesi |B|^{1-1/p} \f{r_B^{2N}}{\rho(x_B)^{2N-|\alpha|}} = |B|^{1-1/p} r_B^{|\alpha|} \Big(\f{r_B}{\rho_B}\Big)^{2N-|\alpha|}. 
\end{aligned}
\]
This confirms \eqref{eq-integral of a}.

We now turn to prove 	$a\in H^p_{\rho}(\Rn_+)$ as long as $r_B<\rho(x_B)$. Recall that $S_0(B)=B, S_j(B)=2^{j}B\setminus 2^{j-1}B, j\ge 1$. Set $\omega = \lfloor n(1/p-1)\rfloor$. Let $\mathcal{V}_j$ be the span of the polynomials $\big\{(x-x_B)^\alpha\big\}_{|\alpha|\le \om }$ on $S_j(B)$ corresponding inner product space given by
$$ \ip{f,g}_{\mathcal{V}_j}:=\fint_{S_j(B)} f(x)g(x)\,dx.$$
Let $\{u_{j,\alpha}\}_{|\alpha|\le \om }$ be an orthonormal basis for $\mathcal{V}_j$ obtained via the Gram--Schmidt process applied to $\big\{(x-x_B)^\alpha\big\}_{|\alpha|\le \om }$
which, through homogeneity and uniqueness of the process, gives
\begin{align}\label{eq:mol1}
	u_{j,\alpha}(x)=\sum_{|\beta|\le \om } \lambda_{\alpha,\beta}^j (x-x_B)^\beta,
\end{align}
where for each $|\alpha|, |\beta|\le \om $ we have
\begin{align}\label{eq:mol2}
	|u_{j,\alpha}(x)|\le C\qquad\text{and}\qquad |\lambda_{\alpha,\beta}^j |\lesi (2^j r_B)^{-|\beta|}. 
\end{align}
Let $\{v_{j,\alpha}\}_{|\alpha|\le \om }$ be the dual basis of $\big\{(x-x_B)^\alpha\big\}_{|\alpha|\le \om }$ in $\mathcal{V}_j$; 
that is, it is the unique collection of polynomials such that
\begin{align}\label{eq:mol3} \ip{v_{j,\alpha}, (\cdot-x_B)^\beta}_{\mathcal V_j}=\delta_{\alpha,\beta}, \qquad  |\alpha|, |\beta|\le \om . \end{align}
Then we have
\begin{align}\label{eq:mol5}
	\Vert v_{j,\alpha}\Vert_\infty \lesi (2^jr_B)^{-|\alpha|},\qquad \forall \; |\alpha|\le \om .
\end{align}
Now let $P:={\rm proj}_{\mathcal{V}_0} (a)$ be the orthogonal projection of $a$ onto $\mathcal{V}_0$. Then  we have
\begin{align}\label{eq:mol6}
	P = \sum_{|\alpha|\le \om } \ip{a, u_{0,\alpha}}_{\mathcal V_0} u_{0,\alpha} = \sum_{|\alpha|\le \om } \ip{a,(\cdot-x_B)^\alpha}_{\mathcal V_0} v_{0,\alpha}.
\end{align}
Let $j_0\in \mathbb N$ such that $2^{j_0}r_B\ge \rho (x_B)>2^{j_0-1}r_B$. Then we write
\[
\begin{aligned}
	P =& \sum_{|\alpha|\le \om } \ip{a,(\cdot-x_B)^\alpha}_{\mathcal V_0} v_{0,\alpha}\\
	&=\sum_{|\alpha|\le \om }\sum_{j=0}^{j_0-2} \ip{a,(\cdot-x_B)^\alpha} \Big[\f{v_{j,\alpha}}{|S_j(B)|}-\f{v_{j+1,\alpha}}{|S_{j+1}(B)|}\Big] + \ip{a,(\cdot-x_B)^\alpha} \f{v_{j_0-1,\alpha}}{|S_{j_0-1}(B)|}.
\end{aligned}
\]
Hence, we can decompose
\[
\begin{aligned}
	a &= (a-P) +\sum_{|\alpha|\le \om }\sum_{j=0}^{j_0-2} \ip{a,(\cdot-x_B)^\alpha} \Big[\f{v_{j,\alpha}}{|S_j(B)|}-\f{v_{j+1,\alpha}}{|S_{j+1}(B)|}\Big] + \sum_{|\alpha|\le \om}\ip{a,(\cdot-x_B)^\alpha}\f{v_{j_0-1,\alpha}}{|S_{j_0-1}(B)|}\\
	&=a_1 +\sum_{|\alpha|\le \om }\sum_{j=0}^{j_0-2}a_{2,j,\alpha} +\sum_{|\alpha|\le \om }a_{3,\alpha}.
\end{aligned}
\]

Let us now outline the important properties of the functions in the above decomposition. 
For $a_1$ we observe that for all $|\alpha|\le \om $,
\begin{align}\label{eq:mol8}
	\supp a_1 \subset B, &&
	\int a_1(x)(x-x_B)^\alpha dx=0, &&
	\Vert a_1\Vert_{L^\vc}\lesi |B|^{-1/p}.
\end{align}
Note that the property 
\[
\int a_1(x)(x-x_B)^\alpha dx=0 \ \text{for all $|\alpha|\le \om$}
\]
implies that 
\[
\int a_1(x)x^\alpha dx=0 \ \text{for all $|\alpha|\le \om$}.
\]
Hence, in this case $a_1$ is a $(p,\rho)$ atom.


Next, for $a_{2,j,\alpha}$, it is obvious that $\supp a_{2,j,\alpha} \subset 2^{j+1}B$. In addition, from \eqref{eq:mol3}, we have
\[
\int a_{2,j,\alpha}(x)(x-x_B)^\beta dx=0 \ \text{for all $|\beta|\le \om $},
\]
which implies
\[
\int a_{2,j,\alpha}(x)x^\beta dx=0 \ \text{for all $|\beta|\le \om $}.
\]
We now estimate the size of $a_{2,j,\alpha}$. Using \eqref{eq-integral of a} to write
\[
\begin{aligned}
	\|a_{2,j,\alpha}\|_{\vc}&\lesi (2^jr_B)^{-|\alpha|} |2^jB|^{-1} \Big|\int a(x)(x-x_B)^\alpha dx\Big| \\
	&\lesi |2^jB|^{-1 -|\alpha|/n} |B|^{1-1/p}r_B^{|\alpha|}\Big(\f{r_B}{\rho (x_B)}\Big)^{2N-|\alpha|}\\
	&\lesi 2^{-j(2N+n-n/p)} |2^jB|^{-1/p}.
\end{aligned}
\]
This means that $a_{2,j,\alpha}$ is a multiple of a $(p,\rho)$ atom, which further implies
\[
\Big\|\sum_{|\alpha|\le \om }\sum_{j=0}^{j_0-2}a_{2,j,\alpha}\Big\|_{H^p_{\rho}}(\Rn_+)\lesi 1,
\]
as long as $N>n(1/p-1)$.

Next, for $a_{3,\alpha}$ we first have $\supp a_{3,\alpha} \subset 2^{j_0}B$. Moreover,  using \eqref{eq-integral of a} again,
\[
\begin{aligned}
	\|a_{2,j,\alpha}\|_{\vc}&\lesi (2^{j_0}r_B)^{-|\alpha|} |2^{j_0}B|^{-1} \Big|\int a(x)(x-x_B)^\alpha dx\Big| \\
	&\lesi |2^{j_0}B|^{-1 -|\alpha|/n} |B|^{1-1/p}r_B^{|\alpha|}\Big(\f{r_B}{\rho (x_B))}\Big)^{2N-|\alpha|}\\
	&\lesi 2^{-j_0(2N+n-n/p)} |2^{j_0}B|^{-1/p}\\
	&\lesi |2^{j_0}B|^{-1/p},
\end{aligned}
\]
as long as $N>n(1/p-1)$.

It follows that $a_{3,\alpha}$ is a $(p,\rho)$ atom and $\|a_{3,\alpha}\|_{H^p_\rho(\Rn_+)}\lesi 1$.

This completes our proof.
\end{proof}

\bigskip

\subsection{Campanato spaces associated to the Laguerre operator $\Delta_\nu$} 

The proof of Theorem \ref{mainthm-dual} is similarly to those in \cite{BD} and hence we just sketch out the main ideas.

\begin{lem}\label{lem-covering lemma}
	Let $\nu\in(-1/2,\vc)^n$. There exist a family of balls $\{B(x_\xi,\rho(x_\xi)): \xi\in \mathcal I\}$ and a family of functions $\{\psi_\xi: \xi \in \mathcal I\}$ such that 
	\begin{enumerate}[{\rm (i)}]
		\item $\displaystyle \bigcup_{\xi\in \mathcal I}B(x_\xi,\rho(x_\xi)) = \mathbb R^n_+$;
		\item $\{B(x_\xi,\rho(x_\xi)/5): \xi\in \mathcal I\}$ is pairwise disjoint;
		\item  $\displaystyle \sum_{\xi\in \mathcal I} \chi_{B(x_\xi,\rho(x_\xi))}\lesi 1$;
		\item $\supp \psi_\xi\subset B(x_\xi,\rho(x_\xi))$ and $0\le \psi_\xi \le 1$ for each $\xi \in \mathcal I$;
		\item $\displaystyle \sum_{\xi\in \mathcal I} \psi_\xi =1$.
	\end{enumerate}
\end{lem}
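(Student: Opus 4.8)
The plan is to follow the standard Whitney/Vitali-type construction for critical functions, exploiting the fact that $\rho$ from \eqref{eq- critical function} is Lipschitz with a small constant: since $x\mapsto \min\{x_1,\dots,x_n\}$ is $1$-Lipschitz, we have $|\rho(x)-\rho(y)|\le \frac{1}{16}|x-y|$ for all $x,y\in\mathbb R^n_+$. In particular, whenever $|x-y|<\frac{1}{5}(\rho(x)+\rho(y))$ this forces $\rho(x)\sim\rho(y)$ with constants very close to $1$. This tight comparability is the engine behind every part of the lemma, so I would record it first and use it repeatedly.

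First I would produce the balls. Using Zorn's lemma, select a maximal subfamily $\{x_\xi:\xi\in\mathcal I\}$ of $\mathbb R^n_+$ for which the balls $B(x_\xi,\rho(x_\xi)/5)$ are pairwise disjoint; this is exactly property (ii). For property (i), fix $x\in\mathbb R^n_+$. By maximality the ball $B(x,\rho(x)/5)$ must meet some $B(x_\xi,\rho(x_\xi)/5)$, so $|x-x_\xi|<\frac{1}{5}(\rho(x)+\rho(x_\xi))$. The Lipschitz estimate then yields $\rho(x)\sim\rho(x_\xi)$, and substituting this comparability back into the distance bound gives $|x-x_\xi|<\rho(x_\xi)$, i.e. $x\in B(x_\xi,\rho(x_\xi))$. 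Hence the balls cover $\mathbb R^n_+$, and in fact one obtains the sharper statement that each $x$ lies in the shrunken ball $B(x_\xi,\frac{4}{5}\rho(x_\xi))$, which I will need for the partition of unity.

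Next I would establish the bounded overlap (iii) by a volume-packing argument. Fix $x$ and set $\mathcal I_x:=\{\xi: x\in B(x_\xi,\rho(x_\xi))\}$. For each such $\xi$ we have $|x-x_\xi|<\rho(x_\xi)$, so the Lipschitz property gives $\rho(x_\xi)\sim\rho(x)$ uniformly in $\xi\in\mathcal I_x$; consequently the disjoint balls $B(x_\xi,\rho(x_\xi)/5)$, $\xi\in\mathcal I_x$, each have measure $\sim\rho(x)^n$ and all lie inside the fixed ball $B(x,C\rho(x))$. Comparing total volume then bounds $\#\mathcal I_x$ by an absolute constant, which is property (iii).

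Finally, for (iv)--(v) I would build the partition of unity in the usual way: choose, for each $\xi$, a function $\phi_\xi$ with $0\le\phi_\xi\le 1$, $\supp\phi_\xi\subset B(x_\xi,\rho(x_\xi))$ and $\phi_\xi\ge c>0$ on $B(x_\xi,\frac{4}{5}\rho(x_\xi))$. By (iii) the sum $\Phi:=\sum_\xi\phi_\xi$ is locally finite, and by the refined covering just noted $\Phi\ge c$ everywhere, so $\psi_\xi:=\phi_\xi/\Phi$ is well-defined and satisfies (iv) and (v). The one point deserving care --- and the main technical obstacle --- is arranging the constants in the comparability of $\rho$ so that the intersection condition actually delivers $x\in B(x_\xi,\rho(x_\xi))$ with radius exactly $\rho(x_\xi)$ rather than a dilate of it; the small Lipschitz constant $\frac{1}{16}$ together with the shrinking factor $\frac{1}{5}$ is precisely what makes this work, and the freedom in choosing that factor leaves room to absorb the remaining constants.
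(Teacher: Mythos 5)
Your proposal is correct and follows essentially the same route as the paper: a maximal (Vitali-type) disjoint family of the balls $B(x,\rho(x)/5)$, the near-constancy of $\rho$ on nearby points to get the covering and the bounded overlap, and a normalization to produce the partition of unity (the paper simply normalizes the indicator functions $\chi_{B(x_\xi,\rho(x_\xi))}$ by their sum, whereas you use bump functions, but the lemma asks for no smoothness so both work). Your version is in fact slightly more careful, since you invoke Zorn's lemma and the explicit Lipschitz bound $|\rho(x)-\rho(y)|\le \tfrac{1}{16}|x-y|$ to handle the unbounded radii and to quantify the volume-packing step, details the paper leaves implicit.
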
	
\begin{proof}
	Consider the family $\{B(x,\rho(x)/5): x\in \mathbb R^n_+\}$. Since $\rho(x)\le 1$ for every $x\in \mathbb R^n_+$, by Vitali's covering lemma we can extract a sub-family denoted by $\{B(x_\xi,\rho(x_\xi)/5): \xi\in \mathcal I\}$ satisfying (i) and (ii).
	
	The item (iii) follows directly from (ii) and \eqref{eq- critical function}.
	
	For each $\xi\in \mathcal I$, define
	\[
	\displaystyle \psi_\xi(x) = \begin{cases}
		\displaystyle \f{\chi_{B(x_\xi,\rho(x_\xi))}(x)}{\sum_{\theta\in \mathcal I}\chi_{B(x_\theta,\rho(x_\theta))}(x)}, \ \ & x\in B(x_\xi,\rho(x_\xi)),\\
		0, & x \notin B(x_\xi,\rho(x_\xi)).		
	\end{cases}
	\] 
	Then $\{\psi_\xi\}_{\xi\in \mathcal I}$ satisfies (iv) and (v).
	
	This completes our proof.
\end{proof}

Since  $\Delta_\nu$ is a non-negative self-adjoint operator  satisfying the Gaussian upper bound (see Proposition \ref{prop- delta k pt d>2}), it is well-known that the kernel $K_{\cos(t\sqrt{\Delta_\nu})}(\cdot,\cdot)$ of $\cos(t\sqrt{\Delta_\nu})$ satisfies 
\begin{equation}\label{finitepropagation}
	{\rm supp}\,K_{\cos(t\sqrt{\Delta_\nu})}(\cdot,\cdot)\subset \{(x,y)\in \Rn_+\times \Rn_+:
	|x-y|\leq t\}.
\end{equation}
See for example \cite{Sikora}.

As a consequence of \cite[Lemma 3]{Sikora}, we have:
\begin{lem}\label{lem:finite propagation}
	Let $\nu\in (-1/2,\vc)^n$. Let $\varphi\in C^\vc_0(\mathbb{R})$ be an even function with {\rm supp}\,$\varphi\subset (-1, 1)$ and $\displaystyle \int \varphi =2\pi$. Denote by $\Phi$ the Fourier transform of $\varphi$.  Then for any $k\in \mathbb N$ the kernel $K_{(t^2\Delta_\nu)^k\Phi(t\sqrt{\Delta_\nu})}$ of $(t^2\Delta_\nu)^k\Phi(t\sqrt{\Delta_\nu})$ satisfies 
	\begin{equation}\label{eq1-lemPsiL}
		\displaystyle
		{\rm supp}\,K_{(t^2\Delta_\nu)^k\Phi(t\sqrt{\Delta_\nu})}\subset \{(x,y)\in \Rn_+\times \Rn_+:
		|x-y|\leq t\},
	\end{equation}
	and
	\begin{equation}\label{eq2-lemPsiL}
		|K_{(t^2\Delta_\nu)^k\Phi(t\sqrt{\Delta_\nu})}(x,y)|\lesi  \f{1}{t^n}
	\end{equation}
	for all $x,y \in \Rn_+$ and $t>0$.
\end{lem}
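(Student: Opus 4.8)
The plan is to recognize the operator as $\eta(t\sqrt{\Delta_\nu})$ for a suitable even Schwartz function $\eta$ whose Fourier transform is compactly supported, deduce the support property \eqref{eq1-lemPsiL} from the finite propagation speed \eqref{finitepropagation}, and obtain the size bound \eqref{eq2-lemPsiL} from the Gaussian heat kernel estimate of Proposition \ref{prop- delta k pt d>2} through an $L^2$ argument. Since $\Phi$ is the Fourier transform of $\varphi\in C_0^\vc(-1,1)$ with $\varphi$ even and real, $\Phi$ is a real even Schwartz function, and hence so is $\eta(\xi):=\xi^{2k}\Phi(\xi)$. In the functional calculus one has $(t^2\Delta_\nu)^k\Phi(t\sqrt{\Delta_\nu})=\eta(t\sqrt{\Delta_\nu})$, and the inverse Fourier transform $\check\eta$ is a constant multiple of $\varphi^{(2k)}$, so $\supp\check\eta\subset(-1,1)$. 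By the spectral theorem and Fourier inversion (both $\eta$ and $\check\eta$ are even),
\[
\eta(t\sqrt{\Delta_\nu})=\frac{1}{2\pi}\int_{-1}^{1}\check\eta(s)\cos(st\sqrt{\Delta_\nu})\,ds .
\]
All the operators appearing below have well-defined kernels because of the Gaussian bound.

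For the support claim, applying \eqref{finitepropagation} with $t$ replaced by $st$ gives $\supp K_{\cos(st\sqrt{\Delta_\nu})}\subset\{|x-y|\le |s|t\}$. Since $|s|<1$ throughout the integral, each cosine kernel vanishes whenever $|x-y|>t$, and therefore so does $K_{\eta(t\sqrt{\Delta_\nu})}$; this is \eqref{eq1-lemPsiL}.

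The size bound \eqref{eq2-lemPsiL} is the main obstacle, precisely because $\cos(st\sqrt{\Delta_\nu})$ is not smoothing (its kernel is only a distribution), so one cannot estimate \eqref{eq2-lemPsiL} by inserting pointwise bounds into the wave representation. I would instead pass through $L^2$. The key sub-estimate is that for every real even Schwartz function $m$ one has $\sup_{x}\big(\int_{\Rn_+}|K_{m(t\sqrt{\Delta_\nu})}(x,y)|^2\,dy\big)^{1/2}\lesi t^{-n/2}$. To see this, note that
\[
\int_{\Rn_+}|K_{m(t\sqrt{\Delta_\nu})}(x,y)|^2\,dy=K_{m^2(t\sqrt{\Delta_\nu})}(x,x),
\]
and since $m$ is Schwartz, $m(t\sqrt\lambda)^2\le C_M(1+t^2\lambda)^{-M}$ for every $M$. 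Because $C_M(I+t^2\Delta_\nu)^{-M}-m^2(t\sqrt{\Delta_\nu})$ is a positive operator with continuous kernel, its diagonal is nonnegative, whence $K_{m^2(t\sqrt{\Delta_\nu})}(x,x)\le C_M K_{(I+t^2\Delta_\nu)^{-M}}(x,x)$. Writing $(I+t^2\Delta_\nu)^{-M}=\frac{1}{\Gamma(M)}\int_0^\vc s^{M-1}e^{-s}e^{-st^2\Delta_\nu}\,ds$ and using the Gaussian bound of Proposition \ref{prop- delta k pt d>2} (with $\ell=0$), for $M>n/2$ the diagonal is $\lesi \int_0^\vc s^{M-1}e^{-s}(st^2)^{-n/2}\,ds\lesi t^{-n}$, which proves the $L^2$ estimate.

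To upgrade this to the pointwise bound, I would factor $\eta$ through a resolvent power. Fix $M>n/2$ and set $\mu(\xi):=(1+\xi^2)^{M}\eta(\xi)$, which is again real, even and Schwartz, with $\supp\check\mu\subset(-1,1)$ (multiplication by $(1+\xi^2)^M$ corresponds to applying $(1-\partial_s^2)^M$ to $\check\eta$). Since $\mu(\xi)(1+\xi^2)^{-M}=\eta(\xi)$, we have $\eta(t\sqrt{\Delta_\nu})=\mu(t\sqrt{\Delta_\nu})(I+t^2\Delta_\nu)^{-M}$, so that
\[
K_{\eta(t\sqrt{\Delta_\nu})}(x,y)=\int_{\Rn_+}K_{\mu(t\sqrt{\Delta_\nu})}(x,z)\,K_{(I+t^2\Delta_\nu)^{-M}}(z,y)\,dz .
\]
Applying the Cauchy–Schwarz inequality together with the $L^2$ estimate above for $\mu$ and the bound $\|K_{(I+t^2\Delta_\nu)^{-M}}(\cdot,y)\|_{2}^2=K_{(I+t^2\Delta_\nu)^{-2M}}(y,y)\lesi t^{-n}$ (again from the Gaussian bound), we obtain $|K_{\eta(t\sqrt{\Delta_\nu})}(x,y)|\lesi t^{-n/2}\cdot t^{-n/2}=t^{-n}$, which is \eqref{eq2-lemPsiL}. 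The delicate point throughout is this transition from the $L^2$ information to the pointwise kernel bound; the positivity of the functional calculus (comparison with resolvent powers) and the resolvent-power factorization are exactly what make it work, and this is the substance of \cite[Lemma 3]{Sikora}.
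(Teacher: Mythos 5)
Your proof is correct. The paper offers no argument of its own here — it simply invokes \cite[Lemma 3]{Sikora} together with the finite propagation property \eqref{finitepropagation} — and what you have written (the wave representation $\eta(t\sqrt{\Delta_\nu})=\frac{1}{2\pi}\int_{-1}^{1}\check\eta(s)\cos(st\sqrt{\Delta_\nu})\,ds$ for the support claim, and the resolvent factorization plus the $L^2\to L^\infty$ diagonal estimate derived from the Gaussian heat kernel bound for the size claim) is exactly the standard argument behind that citation.
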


Lemmas \ref{03} and \ref{07} below can be proved similarly to Lemmas 4.12 and 4.14 in \cite{BD} and we omit the details. 

\begin{lem} \label{03}
Let $s\ge 0$, $M\in \mathbb N, M\ge \lfloor n(1/p-1)\rfloor$, $\nu\in (-1/2,\vc)^n$ and $\rho$ be as in \eqref{eq- critical function}. Let $\Phi$ be as in Lemma \ref{lem:finite propagation}. Then for $k>\f{s}{2}$, there exists $C>0$ such that for all $f\in BMO^{s,M}_{\rho}(\mathbb{R}^n_+)$,
\begin{equation}
	\label{eq1-Carleson}
	\sup_{B: \, {\rm balls}} \f{1}{|B|^{2s/n +1}} \int_0^{r_B}\int_B |(t^2\Delta_\nu)^k  \Phi(t\sqrt{\Delta_\nu})f(x)|^2\f{dxdt}{t}\le  C\|f\|_{BMO^{s,M}_{\rho}(\mathbb{R}^n_+)}.
\end{equation}
\end{lem}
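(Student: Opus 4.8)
The plan is to fix a ball $B=B(x_B,r_B)$ and bound the tent integral in \eqref{eq1-Carleson} by $C|B|^{2s/n+1}\|f\|_{BMO^{s,M}_{\rho}(\mathbb R^n_+)}^2$, the correct homogeneity. Write $\eta(\lambda)=\lambda^{2k}\Phi(\lambda)$, so that $(t^2\Delta_\nu)^k\Phi(t\sq{\Delta_\nu})=\eta(t\sq{\Delta_\nu})$; by Lemma \ref{lem:finite propagation} its kernel is supported in $\{|x-y|\le t\}$ and bounded by $t^{-n}$. Two tools are used throughout. First, \emph{finite propagation}: if $x\in B$ and $t<r_B$, then $\eta(t\sq{\Delta_\nu})f(x)$ depends only on the values of $f$ on $2B$. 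Second, the \emph{spectral square function bound}: since $\int_0^\vc|\eta(u)|^2\,\f{du}{u}<\vc$ (the integrand is $\sim u^{4k-1}$ near $0$ and decays rapidly at $\vc$ because $\Phi$ is Schwartz), the spectral theorem gives $\int_0^\vc\|\eta(t\sq{\Delta_\nu})g\|_2^2\,\f{dt}{t}\lesi\|g\|_2^2$. I split according to the size of $r_B$. In the large-ball regime $2r_B\ge\rho(x_B)$ the estimate is immediate: by finite propagation the tent integral equals the same integral with $f$ replaced by $f\chi_{2B}$, so the square function bound dominates it by $\int_{2B}|f|^2$, and since $2B$ has radius $\ge\rho(x_{2B})$ the second term of the $BMO^{s,M}_{\rho}$ norm controls $\int_{2B}|f|^2\lesi|B|^{2s/n+1}\|f\|^2_{BMO^{s,M}_{\rho}}$.

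The local regime $2r_B<\rho(x_B)$ is the heart of the matter. Set $P:=P^M_{2B}f$ and, using finite propagation, split for $x\in B,\ t<r_B$,
\[
\eta(t\sq{\Delta_\nu})f(x)=\eta(t\sq{\Delta_\nu})\big[(f-P)\chi_{2B}\big](x)+\eta(t\sq{\Delta_\nu})P(x).
\]
For the oscillation piece the spectral square function bound followed by the first term of the $BMO^{s,M}_{\rho}$ norm applied to $2B$ (legitimate since $2r_B<\rho(x_{2B})$) gives a contribution $\lesi\int_{2B}|f-P^M_{2B}f|^2\lesi|B|^{2s/n+1}\|f\|^2_{BMO^{s,M}_{\rho}}$. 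The genuinely delicate term is the polynomial piece.

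For the polynomial piece I would exploit the prefactor $(t^2\Delta_\nu)^k$. For $x\in B,\ t<r_B$ the kernel of $\eta(t\sq{\Delta_\nu})$ is supported in $2B$, which stays at distance $\gtrsim\rho(x_B)$ from $\bigcup_j\{y_j=0\}$; integrating by parts against this smooth, compactly supported kernel moves $\Delta_\nu^k$ onto the polynomial, giving $\eta(t\sq{\Delta_\nu})P(x)=t^{2k}\Phi(t\sq{\Delta_\nu})(\Delta_\nu^k P)(x)$, whence $|\eta(t\sq{\Delta_\nu})P(x)|\lesi t^{2k}\sup_{2B}|\Delta_\nu^k P|$ by the $t^{-n}$ kernel bound for $\Phi(t\sq{\Delta_\nu})$. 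The key algebraic point is that $\Delta_\nu^k=\sum_{|\beta|\le 2k}a_\beta(y)\partial^\beta$ with each $a_\beta$ homogeneous of degree $-(2k-|\beta|)$ under $y\mapsto\lambda y$, so on $2B$, where $y_j\sim(x_B)_j\gtrsim\rho(x_B)$, one has $|a_\beta|\lesi\rho(x_B)^{-(2k-|\beta|)}$. To measure $P$ at the right scales I decompose $P=P^M_{2^{j_0}B}f+\sum_{j=1}^{j_0-1}\big(P^M_{2^jB}f-P^M_{2^{j+1}B}f\big)$, where $2^{j_0}r_B\sim\rho(x_B)$, using the standard minimizing-polynomial estimates $\|P^M_{2^jB}f-P^M_{2^{j+1}B}f\|_{L^\vc(2^jB)}\lesi|2^jB|^{s/n}\|f\|_{BMO^{s,M}_{\rho}}$ together with $\|P^M_{2^{j_0}B}f\|_{L^\vc}\lesi|2^{j_0}B|^{s/n}\|f\|_{BMO^{s,M}_{\rho}}$, the latter coming from the large-ball term of the norm applied to the ball of radius $\sim\rho(x_B)$. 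Combining Bernstein's inequality $\|\partial^\beta R\|_{L^\vc(2B)}\lesi(2^jr_B)^{-|\beta|}\|R\|_{L^\vc(2^jB)}$ with $\rho(x_B)\ge 2^jr_B$ yields, for each dyadic piece $R$ of scale $2^jr_B$, the bound $\sup_{2B}|\Delta_\nu^k R|\lesi(2^jr_B)^{-2k}\|R\|_{L^\vc(2^jB)}$.

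Putting these together, after integrating $t^{4k}\,\f{dt}{t}$ over $0<t<r_B$ and over $x\in B$, the $L^2$-tent norm (square root of the tent integral) of the $j$-th piece is $\lesi|B|^{s/n+1/2}\,2^{j(s-2k)}\|f\|_{BMO^{s,M}_{\rho}}$, with the same bound for the top piece. The series $\sum_j 2^{j(s-2k)}$ converges precisely because $k>\f{s}{2}$; summing by the triangle inequality and squaring gives the desired $\lesi|B|^{2s/n+1}\|f\|^2_{BMO^{s,M}_{\rho}}$. The main obstacle is exactly this polynomial term: because the inverse-square potential prevents $\Delta_\nu$ from annihilating polynomials, one cannot simply discard $P$ as in the classical setting, and must instead trade the vanishing order $2k$ of $\eta$ at the origin against the oscillation of $f$ across all dyadic scales between $r_B$ and $\rho(x_B)$. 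The hypothesis $k>s/2$ is exactly what makes this trade summable.
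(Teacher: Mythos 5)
Your proposal is correct: the finite-propagation/spectral square-function treatment of the oscillation part, together with the trade of the vanishing order $2k$ of $\lambda\mapsto\lambda^{2k}\Phi(\lambda)$ at the origin against the telescoped minimizing polynomials over the dyadic scales between $r_B$ and $\rho(x_B)$ (where $k>s/2$ makes the geometric series converge), is exactly the standard argument for this type of Carleson-measure lemma. The paper itself omits the proof, deferring to Lemmas 4.12 and 4.14 of \cite{BD}, and your write-up supplies that argument in essentially the intended form, including the one genuinely delicate point (that $\Delta_\nu$ does not annihilate polynomials, so the polynomial piece must be measured scale by scale via the homogeneity of the coefficients of $\Delta_\nu^k$ on $2B$ and a Bernstein inequality at scale $2^jr_B$).
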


\begin{lem} \label{07}
Let $\nu\in (-1/2,\vc)^n$, $\rho$ be as in \eqref{eq- critical function} and $\Phi$ be as in Lemma \ref{lem:finite propagation}. Let  $\f{n}{n+\gamma_\nu}<p\le 1$, $s=n(1/p-1)$ and $M\in \mathbb N, M\ge \lfloor s\rfloor$, where $\gamma_\nu=\nu_{\min}+1/2$. 
Then  for every $f \in BMO^{s,M}_{\rho} (\mathbb{R}^n_+)$ and every $(p,\rho)$-atom $a$,
\begin{align} \label{abc}
	\int_{\mathbb R^n_+} f (x) {a(x)}dx  =  C_k\int_{\mathbb R^n_+ \times (0,\infty)}(t^2\Delta_\nu )^k\Phi(t\sqrt{\Delta_\nu})f(x) { t^2\Delta_\nu e^{-t^2\Delta_\nu}a(x)} \frac{dx dt}{t},
\end{align}
where $C_k = \Big[\displaystyle \int_0^\vc z^{k}\Phi(\sqrt z)e^{-z}  dz\Big]^{-1}$. 
\end{lem}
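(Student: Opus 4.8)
The plan is to deduce the identity from a Calderón-type reproducing formula for $\Delta_\nu$, applied to the atom $a$ in $L^2$, and then to transfer the operators onto $f$ by self-adjointness; the only genuine work is to verify that the resulting space--time integral converges absolutely, so that every interchange used is legitimate. First I would record the spectral reproducing formula. For $\lambda>0$ the substitution $z=t^2\lambda$ turns
\[
m(\lambda):=C_k\int_0^\infty (t^2\lambda)^k\Phi(t\sqrt\lambda)\,t^2\lambda\,e^{-t^2\lambda}\,\frac{dt}{t}
\]
into an integral independent of $\lambda$; thus $m$ is constant on $(0,\infty)$, and $C_k$ is exactly the normalization making $m\equiv 1$. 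Since $\Delta_\nu$ is nonnegative and self-adjoint with no point mass at $0$, the functional calculus gives, for every $g\in L^2(\mathbb{R}^n_+)$,
\[
g=C_k\int_0^\infty (t^2\Delta_\nu)^k\Phi(t\sqrt{\Delta_\nu})\,t^2\Delta_\nu e^{-t^2\Delta_\nu}g\,\frac{dt}{t}
\]
with convergence in $L^2$; in particular this holds for $g=a$, as every $(p,\rho)$-atom lies in $L^2(\mathbb{R}^n_+)$.

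Next, because $a$ is bounded with compact support and $f\in L^1_{\mathrm{loc}}$, the left-hand side $\int f\,a$ is a well-defined absolutely convergent integral. Pairing $f$ against the $L^2$-convergent reproducing formula and using that $(t^2\Delta_\nu)^k\Phi(t\sqrt{\Delta_\nu})$ is self-adjoint gives
\[
\int_{\mathbb{R}^n_+} f\,a
=C_k\int_0^\infty\Big\langle (t^2\Delta_\nu)^k\Phi(t\sqrt{\Delta_\nu})f,\; t^2\Delta_\nu e^{-t^2\Delta_\nu}a\Big\rangle\frac{dt}{t}.
\]
Two points need justification. For each fixed $t$, Lemma~\ref{lem:finite propagation} shows the kernel of $(t^2\Delta_\nu)^k\Phi(t\sqrt{\Delta_\nu})$ is supported in $\{|x-y|\le t\}$ and bounded by $t^{-n}$, so $(t^2\Delta_\nu)^k\Phi(t\sqrt{\Delta_\nu})f$ is pointwise well-defined for $f\in BMO^{s,M}_\rho(\mathbb{R}^n_+)$, and the symmetry $\langle f,Th\rangle=\langle Tf,h\rangle$ follows from Fubini on the doubly truncated (hence absolutely convergent) integral. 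The interchange of $\int_0^\infty\frac{dt}{t}$ with the $x$-integration, which rewrites the right-hand side as the space--time integral in \eqref{abc}, is then legitimate once that double integral converges absolutely.

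The main obstacle is precisely this absolute convergence, since $f\notin L^2$. Write $F(x,t)=(t^2\Delta_\nu)^k\Phi(t\sqrt{\Delta_\nu})f(x)$ and $A(x,t)=t^2\Delta_\nu e^{-t^2\Delta_\nu}a(x)$; the task is to show $\iint|F\,A|\,\frac{dx\,dt}{t}<\infty$. I would cover $\mathbb{R}^n_+\times(0,\infty)$ by the annular tents $T_j=T(2^jB)\setminus T(2^{j-1}B)$, $j\ge 0$, where $T(B')=\{(x,t):x\in B',\,0<t<r_{B'}\}$, and estimate each piece by the Cauchy--Schwarz inequality. Lemma~\ref{03} (applicable since $k>s/2$ and $M\ge\lfloor s\rfloor$) controls $\|F\|_{L^2(T(2^jB),\,dx\,dt/t)}\lesssim 2^{j(s+n/2)}|B|^{s/n+1/2}\|f\|_{BMO^{s,M}_\rho}$, while the size and moment bounds of the atom, combined with the kernel estimates of Corollary~\ref{cor1  d ge 2} and Proposition~\ref{prop- delta k pt d>2} (Gaussian decay of $A$ off $B$ and the factor $(r_B/t)^{\,(N_p+1)\wedge\gamma_\nu}$ coming from the vanishing moments when $r_B<\rho(x_B)$, with $N_p=\lfloor s\rfloor$), give $\|A\|_{L^2(T_j,\,dx\,dt/t)}\lesssim 2^{-j\theta}|B|^{1/2-1/p}$ for some $\theta>s+n/2$.

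The decisive point is that $\tfrac{n}{n+\gamma_\nu}<p$ is equivalent to $s<\gamma_\nu$, which guarantees $\theta>s+n/2$; moreover the volume exponents cancel, $|B|^{s/n+1/2}\,|B|^{1/2-1/p}=|B|^{s/n+1-1/p}=1$. Hence the pieces are summable,
\[
\iint|F\,A|\,\frac{dx\,dt}{t}\lesssim \|f\|_{BMO^{s,M}_\rho}\sum_{j\ge 0}2^{j(s+n/2)}\,2^{-j\theta}<\infty ,
\]
which both legitimizes the transfer of operators and the interchange of integrals in the previous step and, together with the reproducing formula of the first step, establishes \eqref{abc}.
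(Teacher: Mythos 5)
Your proposal follows the route the paper itself intends: the paper omits the proof of Lemma \ref{07}, deferring to Lemma 4.14 of \cite{BD}, and the argument there is exactly your combination of the spectral-calculus reproducing formula (with the change of variables $z=t^2\lambda$ fixing the constant), self-adjointness of $(t^2\Delta_\nu)^k\Phi(t\sqrt{\Delta_\nu})$ justified by finite propagation speed, and absolute convergence of the space--time integral obtained by Cauchy--Schwarz over annular tents, with Lemma \ref{03} controlling the $f$-factor and the Gaussian bounds plus the vanishing moments of the atom controlling the other factor; your observation that $p>\frac{n}{n+\gamma_\nu}$ is equivalent to $s<\gamma_\nu$, which together with $\lfloor s\rfloor+1>s$ makes the decay exponent beat $s+n/2$ and the powers of $|B|$ cancel, is precisely the decisive point.

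The one step you pass over too quickly is the limit identifying the left-hand side. Writing $\Pi_\epsilon a := C_k\int_\epsilon^{1/\epsilon}(t^2\Delta_\nu)^k\Phi(t\sqrt{\Delta_\nu})\,t^2\Delta_\nu e^{-t^2\Delta_\nu}a\,\frac{dt}{t}$, the truncated identity $\int f\,\Pi_\epsilon a=C_k\iint_{\epsilon<t<1/\epsilon}F\,A\,\frac{dx\,dt}{t}$ and the convergence of the right-hand side are covered by your absolute-convergence estimate, but since $f\notin L^2(\mathbb{R}^n_+)$ the $L^2$-convergence $\Pi_\epsilon a\to a$ alone does not give $\int f\,\Pi_\epsilon a\to\int f\,a$. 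One must additionally invoke the uniform spatial decay of $\Pi_\epsilon a$ away from $B$ (which the same tail estimates you derive for $A(x,t)$ already provide, uniformly in $\epsilon$) together with the at most polynomial growth of functions in $BMO^{s,M}_{\rho}(\mathbb{R}^n_+)$, so that the pairing outside a large ball is uniformly small and the remaining part converges by Cauchy--Schwarz with $f\in L^2_{\mathrm{loc}}$. This is a standard repair and does not affect the validity of your approach.
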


We are ready to give the proof of Theorem \ref{mainthm-dual}.

\begin{proof} [Proof of  Theorem \ref{mainthm-dual}:] Fix $\f{n}{n+\gamma_\nu}<p\le 1$ and $s=n(1/p-1)$. We divide the proof into several steps.

\medskip

\noindent {\bf Step 1.} \underline{Proof of  $BMO^{s,M}_{\rho}(\mathbb{R}^n_+) \subset (H^p_{\Delta_\nu}(\mathbb{R}^n_+))^\ast$.}

\medskip
Let $f \in BMO^{s,M}_{\rho}(\mathbb{R}^n_+)$ and let $a$ be a  $(p,\rho)$-atoms. Then
by Lemma \ref{07} and Proposition 3.2 in \cite{Yan} (see also \cite{CMS}),
\begin{align*}
	\left|\int_{\Rn_+} f(x) {a(x)}dx  \right|
	& =  \left| \int_{\Rn_+ \times (0,\infty)}(t^2\Delta_\nu)^k\Phi(t\sqrt{\Delta_\nu}) f(x) { t^2\Delta_\nu e^{-t^2\Delta_\nu}a(x)} \frac{dx dt}{t} \right| \\
	&\leq \sup_{B}\left( \f{1}{|B|^{2s/n +1}} \int_0^{r_B}\int_B |(t^2\Delta_\nu)^k\Phi(t\sqrt{\Delta_\nu})f(x)|^2\f{dx dt}{t}\right)^{1/2} \\
	&\qquad \times \left\| \left( \int_0^\vc \int_{|x-y|<t}|t^2\Delta_\nu e^{-t^2\Delta_\nu}a(y)|^2 \frac{dy dt}{t^{n+1}}\right)^{1/2}\right\|_{L^p(\mathbb{R}^n_+)}.
\end{align*}
Using Lemma \ref{03},
\[
\sup_{B}\left( \f{1}{|B|^{2s/n +1}} \int_0^{r_B}\int_B |(t^2\Delta_\nu)^k\Phi(t\sqrt{\Delta_\nu})f(x)|^2\f{dx dt}{t}\right)^{1/2}\lesi \|f\|_{BMO^{s,M}_{\rho}(\mathbb{R}^n_+)}.
\]
In addition, by Theorem \ref{mainthm2s} and Theorem 1.3 in \cite{SY}, we have
\[
\left\| \left( \iint_{\Gamma(x)}|t^2\Delta_\nu e^{-t^2\Delta_\nu}a(y)|^2 \frac{dy dt}{t^{n+1}}\right)^{1/2}\right\|_{L^p(\mathbb{R}^n_+)}\lesi 1.
\]
Consequently,
\[
	\left|\int_{\Rn_+} f(x) {a(x)}dx  \right|\lesi \|f\|_{BMO^{s,M}_{\rho}(\mathbb{R}^n_+)}
\]
for $f\in BMO^{s,M}_{\rho}(\mathbb{R}^n_+)$ and all $(p,\rho)$ atoms $a$.

It follows that  $BMO^{s,M}_{\rho}(\mathbb{R}^n_+) \subset (H^p_{\Delta_\nu}(\mathbb{R}^n_+))^\ast$.

\medskip 

\noindent 	{\bf Step 2. } \underline{Proof of  $(H^p_{\Delta_\nu}(\mathbb{R}^n_+))^\ast \subset BMO^{s,M}_{\rho}(\mathbb{R}^n_+) $.}

\medskip

Let $\{\psi_\xi\}_{\xi \in \mathcal{I}}$ and $\{B(x_\xi,\rho(x_{\xi}))\}_{\xi \in \mathcal{I}}$ as in Corollary \ref{cor1}. 
Set $B_\xi:= B(x_\xi, \rho(x_\xi))$ and we will claim that for any $f \in L^2(\mathbb{R}^n_+)$ and $\xi \in \mathcal{I}$, we have $\psi_\xi f \in H^p_{\Delta_\nu} (\Rn_+)$ and
\begin{align} \label{eq:L2}
	\|\psi_{\xi}f\|_{H^p_{\Delta_\nu}(\mathbb{R}^n_+)} \leq C \left|B_\xi\right|^{\frac{1}{p}-\frac{1}{2}} \|f\|_{L^2(\mathbb{R}^n_+)}.
\end{align}
It suffices to prove that 
\[
\left\|\sup_{t>0}\big|e ^{-t^2 \Delta_\nu}(\psi_{\xi}f)\big| \right\|_{p}\lesssim  |B_\xi|^{\frac{1}{p}-\frac{1}{2}}\|f\|_{2}.
\]
Indeed,  by H\"{o}lder's inequality,
\begin{equation} \label{eq:B}
	\begin{split}
		\left\|\sup_{t>0}\big|e ^{-t^2 \Delta_\nu}(\psi_{\xi}f)\big| \right\|_{L^p(4B_\xi)}& \leq |4B_\xi|^{\frac{1}{p}-\frac{1}{2}}
		\left\| \sup_{t>0}\big|e ^{-t^2 \Delta_\nu}(\psi_{\xi}f)\big| \right\|_{L^2(4B_\xi)}  \\
		& \lesssim  |B_\xi|^{\frac{1}{p}-\frac{1}{2}}\|f\|_{L^2(\mathbb{R}^n_+)}.
	\end{split}
\end{equation}
If $x \in \mathbb R^n_+\backslash 4B_\xi$, then  applying Proposition \ref{prop- delta k pt d>2}  and H\"older's inequality, we get
\begin{align*}
	\big|e ^{-t^2 \Delta_\nu}(\psi_{\xi}f)(x)\big|
	& \lesi \int_{B_\xi} \left( \frac{t}{\rho(y)}\right)^{-\gamma_\nu} \frac{1}{t^n} \exp\left( -\frac{|x-y|^2}{ct^2}\right)|f(y)|dy \\
	& \sim \int_{B_\xi} \left(  \frac{t}{\rho(x_\xi)}\right)^{-\gamma_\nu} \frac{1}{t^n} \exp\left( -\frac{|x-y|^2}{ct^2}\right)|f(y)|dy \\
	& \lesi \frac{\rho(x_\xi)^{\gamma_\nu}}{|x-x_\xi|^{n+\gamma_\nu}}\int_{B_\xi}  |f(y)|dy \\			
	&\lesi \frac{\rho(x_\xi)^{\gamma_\nu}}{|x-x_\xi|^{n+\gamma_\nu}} |B_\xi|^{\frac{1}{2}} \|f\|_{L^2(\mathbb{R}^n_+)},
\end{align*}
which implies that
\begin{equation} \label{eq:BC}
	\begin{split}
		\left\|\sup_{t>0}\big|e ^{-t^2 \Delta_\nu}(\psi_{\xi}f)\big| \right\|_{L^p(\Rn_+ \backslash 4B_\xi)} \lesi  |B_\xi|^{\frac{1}{p}-\frac{1}{2}}\|f\|_{L^2(\mathbb{R}^n_+)},
	\end{split}
\end{equation}
as long as $\f{n}{n+\gamma_\nu}<p\le 1$.

Combining \eqref{eq:B} and \eqref{eq:BC} yields \eqref{eq:L2}.

Assume that $\ell \in (H^p_{\Delta_\nu}(\mathbb{R}^n_+))^\ast$.
For each index $\xi \in \mathcal{I}$ we define
\begin{align*}
	\ell_{\xi} f := \ell(\psi_{\xi} f), \quad f \in L^2(\mathbb{R}^n_+).
\end{align*}
By \eqref{eq:L2},
\begin{align*}
	|\ell_{\xi}(f)| \leq C \|\psi_{\xi}f\|_{H^p_{\Delta_\nu}(\mathbb{R}^n_+)} \leq C|B_\xi|^{\frac{1}{p}-\frac{1}{2}}\|f\|_{L^2(\mathbb{R}^n_+)}.
\end{align*}
Hence there exists $g_{\xi} \in L^{2}(B_\xi)$ such that
\begin{align*}
	\ell_{\xi}(f) = \int_{B_\xi} f(x)g_{\xi}(x)dx ,
	\quad f \in L^2 (\mathbb{R}^n_+).
\end{align*}

We define $g = \sum_{\xi \in \mathcal{I}}1_{B_{\xi}} g_{\xi}$. Then, if $f =\sum_{i=1}^k \lambda_j a_j$, where $k \in \mathbb{N}$,
$\lambda_i \in \mathbb{C}$, and $a_i$ is a $(p,2,\rho)$-atom, $i =1, \cdots, k$, we have
\begin{align*}
	\ell(f) = \sum_{i=1}^k \lambda_i  \ell (a_i)
	&=  \sum_{i=1}^k \lambda_i \sum_{\xi \in \mathcal{I}}\ell ( \psi_\xi a_i) =  \sum_{i=1}^k \lambda_i \sum_{\xi \in \mathcal{I}}\ell_\xi (a_i) \\
	&= \sum_{i=1}^k \lambda_i \sum_{\xi \in \mathcal{I}} \int_{B_\xi} a_i(x)g_\xi (x)dx \\
	&= \sum_{i=1}^k \lambda_i \int_{\Rn_+} g(x)a_i (x)dx \\
	&= \int_{{\Rn_+}} f(x)g(x)dx .
\end{align*}

Suppose that $B = B(x_B, r_B) \in \mathbb R^n_+$ with $r_B < \rho(x_B)$, and $0 \not\equiv f \in L_0^2(B)$, that is,
$f \in L^2(\mathbb{R}^n_+)$ such that $\supp f \subset B$ and $\displaystyle \int_B x^\alpha f(x)dx  =0$ with all $|\alpha|\le M$. Then similarly to \eqref{eq:L2},
\[
\|f\|_{H^p_{\Delta_\nu}(\mathbb{R}^n_+) }\lesi  \|f\|_{L^2} |B|^{1/p -1/2}.
\]

Hence
\begin{align*}
	|\ell(f)| = \Big| \int_B fg\Big| \leq \|\ell\|_{(H^p_{\Delta_\nu}(\mathbb{R}^n_+))^\ast} \|f\|_{H^p_{\Delta_\nu}(\mathbb{R}^n_+) } \leq C \|\ell\|_{(H^p_{\Delta_\nu}(\mathbb{R}^n_+))^\ast} \|f\|_{L^2} |B|^{1/p -1/2}.
\end{align*}
From this we conclude that $g \in (L_0^2(B))^\ast$ and
\[
\|g\|_{(L_0^2(B))^\ast} \leq C \|\ell\|_{(H^p_{\Delta_\nu}(\mathbb{R}^n_+))^\ast}|B|^{1/p -1/2}.
\]
But by elementary functional analysis (see Folland and Stein \cite[p. 145]{FS}),
\[
\|g\|_{(L_0^2(B))^\ast}=\inf_{P\in \mathcal P_M} \|g-P\|_{L^{2}(B)},
\]
where $P \in \mathcal{P}_M$ is the set of all polynomials  of degree at most $M$.
Hence
\begin{align} \label{31}
	\sup_{\substack{B: ball \\ r_B <\rho(x_B)}}|B|^{1/2-1/p} \inf_{P\in \mathcal P_M} \|g-P\|_{L^{2}(B)} \leq C \|\ell\|_{(H^p_{\Delta_\nu}(\mathbb{R}^n_+))^\ast} .
\end{align}
Moreover, if $B$ is a ball with $r_B \ge \rho(x_B)$, and $f \in L^2(\mathbb{R}^n_+)$ such that $f \not\equiv 0$
and $\supp f \subset B$, then similarly to \eqref{eq:L2},
\[
\|f\|_{H^p_{\Delta_\nu}(\mathbb{R}^n_+) }\lesi  \|f\|_{L^2} |B|^{1/p -1/2}.
\] 
Hence,
\begin{align*}
	|\ell(f)| = \left|\int_{\Rn_+} fg \right| \leq C\|\ell\|_{(H^p_{\Delta_\nu}(\mathbb{R}^n_+))^\ast } \|f\|_{H^p_{\Delta_\nu}(\mathbb{R}^n_+)}
	\leq   C\|\ell\|_{(H^p_{\Delta_\nu}(\mathbb{R}^n_+))^\ast } |B|^{1/p-1/2} \|f\|_{L^2}.
\end{align*}
Hence
\begin{align} \label{32}
	\sup_{\substack{B: {\rm ball} \\ r_B \ge \rho(x_B)}}|B|^{1/2 -1/p} \|g\|_{L^{2}(B)} \leq C \|\ell\|_{(H^p_{\Delta_\nu}(\mathbb{R}^n_+))^\ast} .
\end{align}
From \eqref{31} and \eqref{32} it follows that $g \in BMO^{s,M}_{\rho } (\mathbb{R}^n_+)$ and
\begin{align*}
	\|g\|_{BMO^{s,M}_{\rho } (\mathbb{R}^n_+)} \leq C  \|\ell\|_{(H^p_{\Delta_\nu}(\mathbb{R}^n_+))^\ast}, \quad \mbox{where } s =n(1/p-1).
\end{align*}

This completes our proof.
\end{proof}

\section{Boundedness of Riesz transforms on Hardy spaces and Campanato spaces associated to $\Delta_\nu$}

In this section, we will study the boundedness of the higher-order Riesz transforms. We first show in Theorem \ref{thm-Riesz transform} below that the higher-order Riesz transforms are Calder\'on-Zygmund operators. Then, in Theorem \ref{thm- boundedness on Hardy and BMO} we will show that the  higher-order Riesz transforms are bounded on our new Hardy spaces and new BMO type spaces defined in Section 4. 

We first give a formal definition of $\Delta_\nu^{-s}$ for any $s>0$. For $s>0$, by the spectral theory we define

\[
\Delta_\nu^{-s} = \int_0^\infty \lambda^{-s} \, dE(\lambda),
\]
where $E(\lambda)$ is the spectral decomposition of $\Delta_\nu$. 

The domain of \( \Delta_\nu^{-s} \) consists of all \( f \in L^2(\Rn_+) \) such that the integral

\[
\int_0^\infty \lambda^{-2s} \, d\langle E(\lambda)f, f \rangle
\]
is finite.

We will show that

\begin{equation}\label{eq-sub formula}
\Delta_\nu^{-s} = \frac{1}{\Gamma(s)} \int_0^\infty u^{s-1}e^{-u\Delta_\nu}  \, du.
\end{equation}
Indeed, by the spectral theorem, the semigroup \( e^{-u\Delta_\nu} \) can be written  as
\[
e^{-u\Delta_\nu} = \int_0^\infty e^{-u\lambda} \, dE(\lambda).
\]

Substituting this into the integral definition of \( \Delta_\nu^{-s} \),
\[
\frac{1}{\Gamma(s)} \int_0^\infty e^{-u\Delta_\nu} u^{s-1} \, du = \frac{1}{\Gamma(s)} \int_0^\infty \left( \int_0^\infty e^{-u\lambda} \, dE(\lambda) \right) u^{s-1} \, du.
\]

Interchanging the order of integration,
\[
\frac{1}{\Gamma(s)} \int_0^\infty e^{-u\Delta_\nu} u^{s-1} \, du = \frac{1}{\Gamma(s)} \int_0^\infty \left( \int_0^\infty e^{-u\lambda} \,  u^{s-1} \, du\right) \, dE(\lambda) .
\]

This inner integral is a standard  {Laplace transform}:

\[
\int_0^\infty e^{-u\lambda} u^{s-1} \, du = \lambda^{-s} \Gamma(s), \quad \text{for } \lambda > 0.
\]

Thus, we obtain 
\[
\begin{aligned}
	\frac{1}{\Gamma(s)} \int_0^\infty e^{-u\Delta_\nu} u^{s-1} \, du &=   \int_0^\infty \lambda^{-s} dE(\lambda)\\
	&=\Delta_\nu^{-s}.
\end{aligned}
\]
This ensures the formula \eqref{eq-sub formula}.

\begin{thm}\label{thm- boudnedness of Riesz nu large}
	Let $\nu\in (-1,\vc)^n$ and $k\in \mathbb N^n$. Then for any $\alpha\in \mathbb N^n$, the operator $\delta_\nu^k \Delta_\nu^{-|k|/2}-\delta_{\nu+\alpha}^k \Delta_{\nu+\alpha}^{-|k|/2}$ is bounded on $L^p(\Rn_+)$ for all $1<p<\vc$. 
\end{thm}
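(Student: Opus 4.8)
The plan is to begin from the subordination formula \eqref{eq-sub formula}, which yields
\[
\delta_\nu^k\Delta_\nu^{-|k|/2}-\delta_{\nu+\alpha}^k\Delta_{\nu+\alpha}^{-|k|/2}=\f{1}{\Gamma(|k|/2)}\int_0^\vc u^{|k|/2-1}\big[\delta_\nu^k e^{-u\Delta_\nu}-\delta_{\nu+\alpha}^k e^{-u\Delta_{\nu+\alpha}}\big]\,du.
\]
It therefore suffices to prove that the positive operator $f\mapsto \int_0^\vc u^{|k|/2}\big|[\delta_\nu^k e^{-u\Delta_\nu}-\delta_{\nu+\alpha}^k e^{-u\Delta_{\nu+\alpha}}]f\big|\,\f{du}{u}$ is bounded on $L^p(\Rn_+)$. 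By the tensor-product structure \eqref{eq- prod ptnu}, the kernel of $\delta_\nu^k e^{-u\Delta_\nu}$ factors as $\prod_{j=1}^n\delta_{\nu_j}^{k_j}p_u^{\nu_j}(x_j,y_j)$, so I would split the difference of the two products by the telescoping identity
\[
\prod_j A_j-\prod_j B_j=\sum_{i=1}^n\Big(\prod_{j<i}A_j\Big)(A_i-B_i)\Big(\prod_{j>i}B_j\Big),
\]
with $A_j=\delta_{\nu_j}^{k_j}p_u^{\nu_j}$ and $B_j=\delta_{\nu_j+\alpha_j}^{k_j}p_u^{\nu_j+\alpha_j}$. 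This isolates in each summand a single-coordinate difference $A_i-B_i$ multiplied by full one–dimensional Riesz–heat factors in the remaining variables, all evaluated at the same $u$.

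For the factor $A_i-B_i$ I would first reduce the step $\nu_i\to\nu_i+\alpha_i$ to unit steps by a second telescoping $\sum_{m=0}^{\alpha_i-1}\big(\delta_{\nu_i+m}^{k_i}p_u^{\nu_i+m}-\delta_{\nu_i+m+1}^{k_i}p_u^{\nu_i+m+1}\big)$, and then invoke the pointwise (in $u$) unit-step bound of Proposition \ref{prop- difference two riesz kernels}. A point requiring care is that Propositions \ref{prop- difference two riesz kernels} and \ref{prop- difference Riesz} are stated for $\nu>-1/2$, whereas here the first step may start at $\nu_i\in(-1,-1/2]$; I would observe that their proofs extend with essentially no change to $\nu>-1$, since in the relevant regime $x\ge\sqrt u$ one has $1\le 1+\sqrt u/x\le 2$, so the weights $(1+\sqrt u/x)^{-\nu-1/2}$ are comparable to constants irrespective of the sign of $\nu+1/2$, and the only affected $u$-integrals converge precisely for $\nu>-1$. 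Moreover, through \eqref{eq- del nu and del nu + 1} every kernel actually differentiated in those proofs carries an index $\ge\nu_i+1>0$, so Theorem \ref{thm-delta k pt nu} and Proposition \ref{prop-delta k pt 2nd region} apply verbatim.

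To assemble the multivariate estimate I would, in a fixed summand $i$, bound the $n-1$ full factors by the one–dimensional Gaussian estimate of Theorem \ref{thm-delta k pt nu}, $|A_j|,|B_j|\lesi u^{-(k_j+1)/2}\exp(-|x_j-y_j|^2/(cu))$, and the difference factor by $|A_i-B_i|\lesi x_i^{-1}u^{-k_i/2}\exp(-|x_i-y_i|^2/(cu))$ in the main regime $x_i\sim y_i$, $x_i\ge\sqrt u$, treating the companion regimes $x_i\ge 2y_i$, $y_i\ge 2x_i$, $x_i<\sqrt u$ by the individual bounds exactly along the regime analysis of Proposition \ref{prop- difference Riesz}. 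Multiplying, the powers of $u$ combine to $u^{|k|/2}\cdot u^{-|k|/2-(n-1)/2}=u^{-(n-1)/2}$, so integrating the joint Gaussian against $\f{du}{u}$ (over $u\lesi x_i^2$ in the main regime) produces an $n$-dimensional kernel dominated by $x_i^{-1}|x-y|^{-(n-1)}\chi_{\{|x-y|\lesi x_i\}}$. A dyadic decomposition in $|x-y|$ then gives $\f{1}{x_i}\int_{|x-y|\lesi x_i}|x-y|^{-(n-1)}|f(y)|\,dy\lesi \mathcal Mf(x)$, the Hardy–Littlewood maximal function, while the companion regimes yield, after integrating the Gaussian factors in the other $n-1$ variables, one–dimensional Hardy operators in the $i$-th variable composed with bounded averages, which are bounded on $L^p(\Rn_+)$ for $1<p<\vc$ just as the operators $T_2,T_3$ in the proof of Proposition \ref{prop- difference Riesz}. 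Summing over $i$ completes the argument.

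The main obstacle is that $\Delta_\nu^{-|k|/2}$ is a negative power of a \emph{sum} and hence does not factor as a tensor product: the single subordination variable $u$ couples all $n$ coordinates, so one cannot simply multiply the one–dimensional boundedness of Proposition \ref{prop- difference Riesz}. This forces the explicit computation of the coupled $n$-dimensional kernel, where the decisive gain is the extra factor $x_i^{-1}$ (or $y_i^{-1}$) supplied by the difference estimate: it lowers the diagonal singularity from order $n$ to order $n-1$, which is exactly what permits domination by the maximal and Hardy operators. A more delicate secondary point is the extension of the one–dimensional difference estimates to the first unit step when $\nu_i\in(-1,-1/2]$; this is precisely where the enlarged range $\nu\in(-1,\vc)^n$ of the theorem, beyond the Calderón–Zygmund range $(-1/2,\vc)^n$ of Theorem \ref{thm-Riesz transform}, genuinely enters.
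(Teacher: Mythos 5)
Your proposal follows essentially the same route as the paper: the subordination formula reduces the difference to $\int_0^\infty t^{|k|/2}\big[\delta_\nu^k e^{-t\Delta_\nu}-\delta_{\nu+\alpha}^k e^{-t\Delta_{\nu+\alpha}}\big]\,\frac{dt}{t}$, the tensor-product structure isolates a single-coordinate unit-step difference (the paper does this by induction on $\alpha$ rather than by your telescoping sum, an immaterial difference), and that coordinate is handled by Propositions \ref{prop- difference two riesz kernels} and \ref{prop- difference Riesz} while the remaining one-dimensional factors are dominated by Gaussians and hence by the Hardy--Littlewood maximal function. One point in your favour: you explicitly address the fact that those one-dimensional difference estimates are stated only for $\nu>-1/2$ while the theorem allows $\nu\in(-1,\infty)^n$, and you correctly identify $\nu>-1$ as the condition making the relevant $t$-integrals converge, whereas the paper's own proof cites those propositions without comment on this range issue.
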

\begin{proof}
	By induction, it suffices to prove for the case $\alpha = e_j$ for all $j=1,\ldots, n$. We will prove for the case $\alpha=e_1$ since other cases can be done similarly.
	
	Using \eqref{eq-sub formula}, we have
	\[
	\begin{aligned}
		\delta_\nu^k \Delta_\nu^{-|k|/2}-\delta_{\nu+e_1}^k \Delta_{\nu+e_1}^{-|k|/2} &=\f{1}{\Gamma(|k|/2)}\int_0^\vc t^{|k|/2}[\delta_\nu^k e^{-t\Delta_\nu}-\delta_{\nu+e_1}^k e^{-t\Delta_{\nu+e_1}}]\f{dt}{t}\\
		&=\f{1}{\Gamma(|k|/2)}\int_0^\vc t^{k_1/2}[\delta_{\nu_1}^{k_1} e^{-t\Delta_{\nu_1}}-\delta_{\nu_1+e_1}^k e^{-t\Delta_{\nu_1+e_1}}]\prod_{j=2}^n t^{k_j/2}\delta_{\nu_j}^{k_j} e^{-t\Delta_{\nu_j}} \f{dt}{t}.
	\end{aligned}
	\]
	Due to Theorem \ref{thm-delta k pt nu}, we have
	\[
	\sup_{t>0}|t^{k_j/2}\delta_{\nu_j}^{k_j} e^{-t\Delta_{\nu_j}}f| \lesi \mathcal Mf
	\]
	and hence the operator $f\mapsto \sup_{t>0}|t^{k_j/2}\delta_{\nu_j}^{k_j} e^{-t\Delta_{\nu_j}}f|$ is bounded on $L^p(\mathbb R_+)$ for each $j=2,\ldots, n$ and for $1<p<\vc$.
	
	On the other hand, from Proposition \ref{prop- difference Riesz}, the operator 
	\[
	f\mapsto \int_0^\vc t^{k_1/2}\big|[\delta_{\nu_1}^{k_1} e^{-t\Delta_{\nu_1}}-\delta_{\nu_1+e_1}^k e^{-t\Delta_{\nu_1+e_1}}]f\big|\f{dt}{t}
	\]
	is bounded on $L^p(\mathbb R_+)$ for $1<p<\vc$.
	
	Consequently, the operator $\delta_\nu^k \Delta_\nu^{-|k|/2}-\delta_{\nu+e_1}^k \Delta_{\nu+e_1}^{-|k|/2}$ is bounded on $L^p(\mathbb R_+)$ for $1<p<\vc$.
	
	This completes our proof.
\end{proof}

We are ready to give the proof of Theorem \ref{thm-Riesz transform}.
\begin{proof}[Proof of Theorem \ref{thm-Riesz transform}:] 

\medskip

 {\textit{We first prove that the higher Riesz $\delta_\nu^k \Delta_\nu^{-|k|/2}$ is bounded on $L^2(\Rn_+)$ by induction.}}
	
	For $|k|=1$, from \eqref{eq-Delta nu},
	\[
	\begin{aligned}
		\|\Delta_\nu^{1/2}f\|_2^2&=\langle \Delta_\nu^{1/2}f, \Delta_\nu^{1/2}f \rangle =\langle \Delta_\nu f,  f \rangle\\
		&=\sum_{j=1}^n\langle \delta_{\nu_j}^* \delta_{\nu_j}f,  f \rangle = \sum_{j=1}^n\langle  \delta_{\nu_j}f,  \delta_{\nu_j}f \rangle\\
		&=\sum_{j=1}^n\|\delta_{\nu_j}f\|_2^2,
	\end{aligned}
	\]
	which implies that  
	\[
	\|\delta_{\nu_j}f\|_2\le \|\Delta_\nu^{1/2}f\|_2, \ \ j=1,\ldots, n.
	\]
	Hence, the Riesz transform $\delta_\nu^k \Delta_\nu^{-|k|/2}$ is bounded on $L^2(\Rn_+)$ if $|k|=1$. 
	
	Assume that the Riesz transform $\delta_\nu^k \Delta_\nu^{-|k|/2}$ is bounded on $L^2(\Rn_+)$ for all $k$ with $|k|=\ell$ for some $\ell\ge 1$. We need to prove that for any $k$ with $|k|=\ell+1$ the Riesz transform $\delta_\nu^k \Delta_\nu^{-|k|/2}$ is bounded on $L^2(\Rn_+)$.
	
	If $k_i\le 1$ for all $i=1,\ldots, n$, we might assume that $k_1=\ldots=k_j=1$ and $k_{j+1}=\ldots = k_n =0$ for some $1\le j\le n$. Then we can write
	\[
	\delta_\nu^k \Delta_\nu^{-|k|/2} = [\delta_{\nu_1}\Delta_{\nu_1}^{-1/2}\otimes \ldots \otimes \delta_{\nu_j}\Delta_{\nu_j}^{-1/2}\otimes I \otimes \ldots \otimes I]\circ [\Delta_{\nu_1}^{1/2}\otimes \ldots \otimes \Delta_{\nu_j}^{1/2}\otimes I \otimes \ldots \otimes I]\Delta_\nu^{-|k|/2}.
	\]
	Since each $\delta_{\nu_i}\Delta_{\nu_i}^{-1/2}$ is bounded on $L^2(\mathbb R_+)$ for $i=1,\ldots, j$, the operator $\delta_{\nu_1}\Delta_{\nu_1}^{-1/2}\otimes \ldots \otimes \delta_{\nu_j}\Delta_{\nu_j}^{-1/2}\otimes I \otimes \ldots \otimes I$ is bounded on $L^2(\Rn_+)$. On the other hand, by the joint spectral theory, the operator $[\Delta_{\nu_1}^{1/2}\otimes \ldots \otimes \Delta_{\nu_j}^{1/2}\otimes I \otimes \ldots \otimes I]\Delta_\nu^{-|k|/2}$ is bounded on $L^2(\Rn_+)$. Therefore, the Riesz transform $\delta_\nu^k \Delta_\nu^{-|k|/2} $ is bounded on $L^2(\Rn_+)$. Hence, we have prove the $L^2(\Rn_+)$-boundedness for the Riesz transform $\delta_\nu^k \Delta_\nu^{-|k|/2} $ in the case $k_i\le 1$ for all $i=1,\ldots, n$.
		
	Otherwise, we might assume that $k_1\ge 2$. By Theorem \ref{thm- boudnedness of Riesz nu large}, we might assume that $\nu_1\ge k_1 +2$. Then using the fact
	\[
	\delta_{\nu_1}^2 = -\Delta_{\nu_1} +\f{2\nu+1}{x_1}\delta_{\nu_1},
	\] 
	which implies
	\[
	\delta_\nu^k \Delta_\nu^{-|k|/2} = -\delta_\nu^{k-2e_1} \Delta_{\nu_1}\Delta_\nu^{-|k|/2} + (2\nu+1)\delta_\nu^{k-2e_1}\Big[\f{1}{x_1}\delta_{\nu_1}\Delta_\nu^{-|k|/2} \Big].
	\]
	For the first operator, we can write 
	\[
	-\delta_\nu^{k-2e_1} \Delta_\nu^{-(|k|-2)/2}\circ \Delta_{\nu_1}\Delta_\nu^{-1}.
	\]
	The operator $\delta_\nu^{k-2e_1} \Delta_\nu^{-|k|/2}$ is bounded on $L^2$ due to the inductive hypothesis, while the operator $\Delta_{\nu_1}\Delta_\nu^{-1}$ is bounded on $L^2$ due to the joint spectral theory. Hence, the first operator is bounded on $L^2$.
	
	For the second operator, using the product rule we have
	\[
	\delta_\nu^{k-2e_1}\Big[\f{1}{x_1}\delta_{\nu_1}\Delta_\nu^{-|k|/2} \Big]=\sum_{j=0}^{k_1-2}\f{c_j}{x_1^{1+j}}\delta_\nu^{k-(1+j)e_1}\Delta_\nu^{-|k|/2}
	\]
	Hence, it suffices to prove that the operator 
	$$
	f\mapsto \f{1}{x^{j}_1}\delta_{\nu}^{k-je_1}\Delta_{\nu}^{-|k|/2}f
	$$
	is bounded on $L^2(\Rn_+)$ for each $j=1,\ldots, k_1-2$.

	Denote by $\delta_{\nu}^{k-je_1}\Delta_{\nu}^{-|k|/2}(x,y)$ the kernel of $\delta_{\nu}^{k-je_1}\Delta_{\nu}^{-|k|/2}$. By Proposition \ref{prop- delta k pt d>2} we have, for each $j=2,\ldots, k_1-2$ and a fixed $\epsilon \in (0,1)$,
	\[
	\begin{aligned}
		\f{1}{x_1^j}\delta_{\nu}^{k-je_1}\Delta_{\nu}^{-|k|/2}(x,y) &= \f{1}{x_1^j}\int_0^\vc t^{|k|/2}\delta_{\nu}^{k-je_1} p_t^{\nu}(x,y) \f{dt}{t}\\
		&\lesi \int_0^\vc  \Big(\f{\sqrt t}{x_1}\Big)^{j}\f{1 }{t^{n/2}}\exp\Big(-\f{|x-y|^2}{ct}\Big)\Big(1+\f{\sqrt t}{x_1}\Big)^{-(\nu_{1}+3/2)} \f{dt}{t}\\&\lesi \int_0^\vc   \f{\sqrt t}{x_1} \f{1 }{t^{n/2}}\exp\Big(-\f{|x-y|^2}{ct}\Big)\Big(1+\f{\sqrt t}{x_1}\Big)^{-(\nu_{1}-j+5/2)} \f{dt}{t}\\
		&\lesi  \f{1}{x_1|x-y|^{n-1}} \min\Big\{\Big(\f{|x-y|}{ x_1}\Big)^{-\epsilon}, \Big(\f{|x-y|}{ x_1}\Big)^{-2}\Big\},
	\end{aligned}
	\]
	since $\nu_1-j+5/2\ge 2$ for $j=2,\ldots, k_1-2$.

	Hence,
	\[
	\begin{aligned}
		\Big|\f{1}{x_1^j}\delta_{\nu}^{k-je_1}\Delta_{\nu}^{-|k|/2}f(y)\Big|&\lesi \int_{|x-y|\le x_1}  \f{1}{x_1|x-y|^{n-1}} \Big(\f{|x-y|}{x_1}\Big)^{-\epsilon} |f(y)|dy\\
		& \ \ \ \ +\int_{|x-y|> x_1} \f{1}{x_1|x-y|^{n-1}} \Big(\f{|x-y|}{x_1}\Big)^{-2} |f(y)|dy  \\
		&=:I_1 +I_2.
	\end{aligned}
	\]
	It is easy to see that 
	\[
	\begin{aligned}
		I_1&\lesi \int_{|x-y|\le   x_1}\Big(\f{|x-y|}{x_1}\Big)^{1-\epsilon}\f{|f(y)|}{|x-y|^{n}}dy\\
		&\lesi \mathcal Mf(x)
	\end{aligned}
	\]
	and
	\[
	\begin{aligned}
		I_2&\lesi \int_{|x-y|> x_1} \f{|f(y)|}{|x-y|^{n}}\Big(\f{x_1}{|x-y|}\Big)^{\nu_1+1/2} dy\\
		&\lesi \mathcal Mf(x),
	\end{aligned}
	\]
	where $\mathcal M$ is the Hardy-Littlewood maximal function.

	It follows that the operator $\f{1}{x_1^j}\delta_{\nu}^{k-je_1}\Delta_{\nu}^{-|k|/2}$ is bounded on $L^2(\Rn_+)$, which completes the proof of the $L^2(\Rn_+)$-boundedness of $\delta_\nu^k\Delta_\nu^{-|k|/2}$.

\bigskip

{\textit{It remains to prove the kernel estimates for  the Riesz transforms $\delta_\nu^k\Delta_\nu^{-|k|/2}$.}
}

	Recall that $\delta^k_\nu \Delta_\nu^{-|k|/2}(x,y)$ is the kernel of $\delta^k_\nu \Delta_\nu^{-|k|/2}$. By \eqref{eq-sub formula} and Proposition \ref{prop- delta k pt d>2}, we have
	\begin{equation}\label{eq- R kernel 1}
		\begin{aligned}
			|\delta^k_\nu \Delta_\nu^{-|k|/2}(x,y)| &= c\Big|\int_0^\vc t^{|k|/2}\delta^k_\nu p_t^\nu(x,y) \f{dt}{t}\Big|\\
			&\lesi \int_0^\vc  \f{1 }{t^{n/2}}\exp\Big(-\f{|x-y|^2}{ct}\Big)\Big(1+\f{\sqrt t}{\rho(x)}+\f{\sqrt t}{\rho(y)}\Big)^{-(\nu_{\min}+1/2)} \f{dt}{t}\\
			&\lesi  \f{1}{|x-y|^n} \Big(1+\f{|x-y|}{\rho(x)}+\f{|x-y|}{\rho(y)}\Big)^{-(\nu_{\min}+1/2)},
		\end{aligned}
	\end{equation}
	which implies
	\[
	| \delta^k_\nu \Delta_\nu^{-|k|/2}(x,y)|\lesi \f{1}{|x-y|^n}, \ \ \ x\ne y.
	\]
	
	We will show that 
	\[
	| \delta^k_\nu \Delta_\nu^{-|k|/2}(x,y)-\delta^k_\nu \Delta_\nu^{-|k|/2}(x,y')| \lesi \f{1}{|x-y|^n} \Big(\f{|y-y'|}{|x-y|}\Big)^{\nu_{\min}+1/2},
	\]		
	whenever $|y-y'|\le\f{1}{2}|x-y|$.
	
	Indeed, if $|y-y'|\ge \max\{\rho(y),\rho(y')\}$, then from \eqref{eq- R kernel 1} we have
	\[
	\begin{aligned}
		| \delta^k_\nu \Delta_\nu^{-|k|/2}(x,y)-\delta^k_\nu \Delta_\nu^{-|k|/2}(x,y')|&\lesi | \delta^k_\nu \Delta_\nu^{-|k|/2}(x,y)|+|\delta^k_\nu \Delta_\nu^{-|k|/2}(x,y')| \\
		&\lesi \f{1}{|x-y|^n}\Big[\Big(\f{\rho(y)}{|x-y|}\Big)^{\nu_{\min}+1/2}+\Big(\f{\rho(y')}{|x-y|}\Big)^{\nu_{\min}+1/2}\Big]\\
		&\lesi \f{1}{|x-y|^n} \Big(\f{|y-y'|}{|x-y|}\Big)^{\nu_{\min}+1/2}
	\end{aligned}
	\]
	
	If $|y-y'|\lesi \max\{\rho(y),\rho(y')\}$, then by  the mean value theorem,
	\begin{equation}\label{eq- R(x,y)-R(x,y')}
		\begin{aligned}
			| \delta^k_\nu \Delta_\nu^{-|k|/2}(x,y)&-\delta^k_\nu \Delta_\nu^{-|k|/2}(x,y')|\\
			&= c\Big|\int_0^\vc t^{|k|/2}\big[\delta^k_\nu p_t^\nu(x,y)-\delta^k_\nu p_t^\nu(x,y')\big] \f{dt}{t}\Big|\\
			&\lesi |y-y'|\int_0^\vc t^{|k|/2}\sup_{\theta\in[0,1]}\big|\partial_y \delta^k_\nu p_t^\nu(x,y +\theta(y-y'))\big| \f{dt}{t}.
		\end{aligned}
	\end{equation}
	By Theorem \ref{prop-gradient x y d>2}, we have
	\[
	\begin{aligned}
		\big|\partial_y \delta^k_\nu p_t^\nu(x,y +\theta(y-y'))\big|\lesi &\Big[\f{1}{\sqrt t} +\f{1}{\rho(y +\theta(y-y'))}\Big]\f{1}{t^{(n+k)/2}}\exp\Big(-\f{|x-[y +\theta(y-y')]|}{ct}\Big)\\
		&\ \ \ \ \times \Big(1+\f{\sqrt t}{\rho(x)}+\f{\sqrt t}{\rho(y +\theta(y-y'))}\Big)^{-(\nu_{\rm min}+1/2)}.
	\end{aligned}
	\]
	Note that 
	\[
	|x-[y +\theta(y-y')]|\sim |x-y|\ \ \text{and} \ \ \rho(y +\theta(y-y'))\sim \rho(y)
	\]
	for all $\theta\in [0,1]$, $|y-y'|\le \f{1}{2}|x-y|$ and $|y-y'|\lesi \max\{\rho(y),\rho(y')\}$.
	
	Therefore,
	\[
	\big|\partial_y \delta^k_\nu p_t^\nu(x,y +\theta(y-y'))\big|\lesi \Big[\f{1}{\sqrt t} +\f{1}{\rho(y)}\Big]\f{1}{t^{(n+k)/2}}\exp\Big(-\f{|x-y|}{ct}\Big)\ \Big(1+\f{\sqrt t}{\rho(x)}+\f{\sqrt t}{\rho(y)}\Big)^{-(\nu_{\rm min}+1/2)}
	\]
	for all $\theta\in [0,1]$, $|y-y'|\le \f{1}{2}|x-y|$ and $|y-y'|\lesi \max\{\rho(y),\rho(y')\}$.
	
	Putting it back into \eqref{eq- R(x,y)-R(x,y')}, 
	\[
	\begin{aligned}
		| \delta^k_\nu \Delta_\nu^{-|k|/2}(x,y)&-\delta^k_\nu \Delta_\nu^{-|k|/2}(x,y')|\\
		&\lesi |y-y'|\int_0^\vc \f{1}{\rho(y)}\Big]\f{1}{t^{(n+k)/2}}\exp\Big(-\f{|x-y|}{ct}\Big)\ \Big(1+\f{\sqrt t}{\rho(x)}+\f{\sqrt t}{\rho(y)}\Big)^{-(\nu_{\rm min}+1/2)}  \f{dt}{t}\\						
		&\lesi  |y-y'|\Big[\sum_{j=1}^n\f{1}{\rho(y)}+ \f{1}{|x-y|}\Big]\Big(1+\f{|x-y|}{\rho(x)}+\f{|x-y|}{\rho(y)}\Big)^{-(\nu_{\min}+1/2)}\f{1}{|x-y|^n}\\
		&\lesi \Big[ \f{|y-y'|}{\rho(y)}+ \f{|y-y'|}{|x-y|}\Big]\Big(1+\f{|x-y|}{\rho(x)}+\f{|x-y|}{\rho(y)}\Big)^{-(\nu_{\min}+1/2)}\f{1}{|x-y|^n}\\
		&\lesi \f{|y-y'|}{\rho(y)} \Big(1+\f{|x-y|}{\rho(x)}+\f{|x-y|}{\rho(y)}\Big)^{-(\nu_{\min}+1/2)}\f{1}{|x-y|^n}+ \f{|y-y'|}{|x-y|}\f{1}{|x-y|^n}\\
		&=: E_1 +E_2.
	\end{aligned}
	\]

	Since $|y-y'|\lesi \rho(y)$, we have
	\[
	\begin{aligned}
		E_1&\lesi \Big(\f{|y-y'|}{\rho(y)}\Big)^{\gamma_\nu} \Big(1+\f{|x-y|}{\rho(x)}+\f{|x-y|}{\rho(y)}\Big)^{-\gamma_\nu}\f{1}{|x-y|^n}\\
		&\lesi \Big(\f{|y-y'|}{\rho(y)}\Big)^{\gamma_\nu} \Big(\f{|x-y|}{\rho(x)}\Big)^{-\gamma_\nu}\f{1}{|x-y|^n}\\
		&\lesi \Big(\f{|y-y'|}{|x-y|}\Big)^{\gamma_\nu}\f{1}{|x-y|^n},
	\end{aligned}
	\]
	where  and $\gamma_\nu = \min\{1, \nu_{\min}+1/2\}$.
	
	For the same reason, since $|y-y'|\le |x-y|/2$, we have
	\[
	\begin{aligned}
		E_2	&\lesi \Big(\f{|y-y'|}{|x-y|}\Big)^{\gamma_\nu}\f{1}{|x-y|^n}.
	\end{aligned}
	\] 
	It follows that 
	\[
	\begin{aligned}
		| \delta^k_\nu \Delta_\nu^{-|k|/2}(x,y)-\delta^k_\nu \Delta_\nu^{-|k|/2}(x,y')|\lesi \Big(\f{|y-y'|}{x-y}\Big)^{\gamma_\nu}\f{1}{|x-y|^n},
	\end{aligned}
	\]
	whenever $|y-y'|\le |x-y|/2$.
	
	Similarly, we also have
	\[
	\begin{aligned}
		| \delta^k_\nu \Delta_\nu^{-|k|/2}(y,x)-\delta^k_\nu \Delta_\nu^{-|k|/2}(y',x)|&\lesi \Big(\f{|y-y'|}{|x-y|}\Big)^{\gamma_\nu}\f{1}{|x-y|^n},
	\end{aligned}
	\]
	whenever $|y-y'|\le |x-y|/2$.
	
	This completes our proof.
\end{proof}

We would like to emphasize that Theorem \ref{thm-Riesz transform} is new, even when $n=1$. In fact, in the case of $n=1$,   the boundedness of the Riesz transform was explored in \cite{NS, Betancor1, Betancor2}. In \cite{NS}, the Riesz operator was decomposed into local and global components. 

We now give the proof of Theorem \ref{thm- boundedness on Hardy and BMO}.

\begin{proof}[Proof of Theorem \ref{thm- boundedness on Hardy and BMO}:]
Fix $\f{n}{n+\gamma_\nu}<p\le 1$, $k \in \mathbb N^n$ and $M>n(1/p-1)$. 

\noindent (i)  Recall from \cite{SY} that for $p\in (0,1]$ and $N\in \mathbb N$, a function $a$ is call a $(p,N)_{\Delta_\nu}$ atom associated to a ball $B$  if  
\begin{enumerate}[{\rm (i)}]
	\item  $a=\Delta_\nu^N b$;
	\item $\supp \Delta_\nu^{k}b\subset B, \ k=0, 1, \dots, M$;
	\item $\|\Delta_\nu^{k}b\|_{L^\vc(\mathbb{R}^n_+)}\leq
	r_B^{2(N-k)}|B|^{-\f{1}{p}},\ k=0,1,\dots,N$.
\end{enumerate}

Let $f\in H^p_{\Delta_\nu}(\mathbb{R}^n_+)\cap L^2(\Rn_+)$. Since $\Delta_\nu$ is a nonnegative self-adjoint operator and satisfies the Gaussian upper bound, by Theorem 1.3 in \cite{SY}, we can write $f = \sum_{j}\lambda_ja_j$ in $L^2(\Rn_+)$, where 
$\sum_{j}|\lambda_j|^p\sim \|f\|^p_{H^p_{\Delta_\nu}(\mathbb{R}^n_+)}$ and each $a_j$ is a $(p,N)_{\Delta_\nu}$ atom with $N>n(\f{1}{p}-1)$.  

In addition, by  Theorem \ref{mainthm2s}, $H^p_{\Delta_{\nu+2\vec{M} }}(\mathbb{R}^n_+)\equiv H^p_{\Delta_\nu}(\mathbb{R}^n_+)\equiv H^p_{\rho}(\mathbb R^n_+)$, where $\vec{M}=(M,\ldots,M)\in \mathbb R^n$. Consequently, it suffices to prove that 
\[
\Big\|\sup_{t>0}|e^{-t\Delta_{\nu + k+ 2\vec{M}}}\delta^k_\nu  \Delta_\nu^{-|k|/2}a|\Big\|_p\lesi 1
\]
for all $(p,M)_{\Delta_\nu}$ atoms $a$.

Let $a$ be a $(p,M)_{\Delta_\nu}$ atom associated to a ball $B$. We have
\[
\begin{aligned}
	\Big\|\sup_{t>0}|e^{-t\Delta_{\nu + k+ 2\vec{M}}}\delta^k_\nu  \Delta_\nu^{-|k|/2}a|\Big\|_p&\lesi \Big\|\sup_{t>0}|e^{-t\Delta_{\nu + k+ 2\vec{M}}}\delta^k_\nu  \Delta_\nu^{-|k|/2}a|\Big\|_{L^p(4B)}\\ & \ \ \ \ \ +\Big\|\sup_{t>0}|e^{-t\Delta_{\nu + k+ 2\vec{M}}}\delta^k_\nu  \Delta_\nu^{-|k|/2}a|\Big\|_{L^p(\mathbb R^n_+\backslash 4B)}.		
\end{aligned}
\]
Using the $L^2$-boundedness of both $f\mapsto \sup_{t>0}|e^{-t \Delta_{\nu + \vec M} }f|$ and the Riesz transform $\delta^k_\nu  \Delta_\nu^{-|k|/2}$ and the H\"older  inequality, by the standard argument, we have
\[
\Big\|\sup_{t>0}|e^{-t\Delta_{\nu + k+ 2\vec{M}}}\delta^k_\nu  \Delta_\nu^{-|k|/2}a|\Big\|_{L^p(4B)}\lesi 1.
\]
For the second term, using $a=\Delta_\nu^Mb$,
\[
e^{-t\Delta_{\nu + k+ 2\vec{M}}}\delta^k_\nu  \Delta_\nu^{-|k|/2}a = e^{-t\Delta_{\nu + k+ 2\vec{M}}}\delta^k_\nu  \Delta_\nu^{M-|k|/2}b.
\]
We have
\[
\begin{aligned}
	K_{e^{-t\Delta_{\nu + k+ 2\vec{M}}}\delta^k_\nu  \Delta_\nu^{M-|k|/2}}(x,y) &= c\int_0^\vc u^{|k|/2}K_{e^{-t\Delta_{\nu + k+ 2\vec{M}}}\delta^k_\nu  \Delta_\nu^{M}e^{-u\Delta_\nu}}(x,y) \f{du}{u}\\
	&= c\int_0^t \ldots \f{du}{u} + c\int_t^\vc \ldots \f{du}{u}\\
	&=I_1+I_2,
\end{aligned}
\]
where $K_{e^{-t\Delta_{\nu + k+ 2\vec{M}}}\delta^k_\nu  \Delta_\nu^{M-|k|/2}}(x,y)$ is the kernel of ${e^{-t\Delta_{\nu + k+ 2\vec{M}}}\delta^k_\nu  \Delta_\nu^{M-|k|/2}}$.

For $I_1$, by Corollary \ref{cor1  d ge 2} and Theorem \ref{thm-heat kernel} we have, for $u\le t$,
\[
\begin{aligned}
	|K_{e^{-t\Delta_{\nu + k+ 2\vec{M}}}\delta^k_\nu  \Delta_\nu^{M}e^{-u\Delta_\nu}}(x,y)|&=\Big|\int_{\Rn_+} \Delta_\nu^M(\delta_\nu^*)^kp_t^{\nu+k+2\vec{M}}(z,x)p_u^{\nu}(z,y)dz\Big|\\
	&\lesi \f{1}{t^{M+|k|/2}}\int_{\Rn_+}\f{1}{t^{n/2}}\exp\Big(
	-\f{|x-z|^2}{ct}\Big)\f{1}{u^{n/2}}\exp\Big(
	-\f{|z-y|^2}{ct}\Big)dz\\
	&\lesi \f{1}{t^{M+|k|/2+n/2}}\exp\Big(
	-\f{|x-y|^2}{ct}\Big),
\end{aligned}
\]
which implies that 
\[
\begin{aligned}
	I_1&\lesi \f{1}{t^{M+n/2}}\exp\Big(
	-\f{|x-y|^2}{ct}\Big) \\
	&\lesi \f{1}{|x-y|^{2M+n}}.
\end{aligned}
\]
Similarly, by Corollary \ref{cor1  d ge 2} and Theorem \ref{thm-heat kernel} we have, for $u> t$
\[
\begin{aligned}
	|K_{e^{-t\Delta_{\nu + k+ 2\vec{M}}}\delta^k_\nu  \Delta_\nu^{M}e^{-u\Delta_\nu}}(x,y)|&= \Big|\int_{\Rn_+}p_t^{\nu+k+2\vec{M}}  (x,z) \delta^k_\nu  \Delta_\nu^{M} p_u^\nu(z,y)| dz\Big|\\
	&	\lesi \f{1}{u^{M+|k|/2}}\int_{\Rn_+}\f{1}{t^{n/2}}\exp\Big(
	-\f{|x-z|^2}{ct}\Big)\f{1}{u^{n/2}}\exp\Big(
	-\f{|z-y|^2}{ct}\Big)dz\\
	&\lesi \f{1}{u^{M+|k|/2+n/2}}\exp\Big(
	-\f{|x-y|^2}{cu}\Big)
\end{aligned}
\]
which implies that 
\[
\begin{aligned}
	I_2
	&\lesi \int_t^\vc \f{1}{u^{M}} \f{1}{u^{n/2}}\exp\Big(-\f{|x-y|^2}{cu}\Big)  \f{du}{u}\\
	&\lesi \int_0^{|x-y|^2}\ldots + \int_{|x-y|^2}^\vc\ldots \\ 		
	&\lesi \f{1}{|x-y|^{2M}} \f{1}{|x-y|^n}.
\end{aligned}
\]

Hence,
\[
\begin{aligned}
	\Big\|\sup_{t>0}|e^{-t\Delta_{\nu + k+ 2\vec{M}}}\delta^k_\nu  \Delta_\nu^{-|k|/2}a|\Big\|^p_{L^p(\mathbb R^n_+\backslash 4B)}
	& \lesi \int_{\mathbb R^n_+\backslash 4B} \Big[\int_{B} \f{1}{|x-y|^{2M}} \f{1}{|x-y|^n}  |b(y)|dy\Big]^{p}dx\\
	& \lesi \int_{\mathbb R^n_+\backslash 4B} \Big[\int_{B} \f{1}{|x-x_B|^{2M}} \f{1}{|x-x_B|^n} |b(y)|dy\Big]^{p}dx\\
	&\lesi \int_{\mathbb R^n_+\backslash 4B}  \f{\|b\|_1^p}{|x-x_B|^{(n+2M)p}}  dx\\
	&\lesi \int_{\mathbb R^n_+\backslash 4B}  \f{r_B^{2Mp}|B|^{p-1}}{|x-x_B|^{(n+2M)p}}  dx\\		
	&\lesi 1,
\end{aligned}
\]
as long as $M>n(1/p-1)$.
\bigskip

\noindent (ii)  By the duality in Theorem \ref{mainthm-dual}, it suffices to prove that the conjugate $\Delta_\nu^{-1/2}\delta_{\nu_j}^*$ is bounded on the Hardy space  $H^p_{\rho}(\mathbb R^n_+)$. By Theorem \ref{mainthm2s}, we need only to prove that 
\[
\Big\|\sup_{t>0}|e^{-t\Delta_{\nu}} \Delta_\nu^{-|k|/2}(\delta_\nu^*)^ka|\Big\|_p\lesi 1
\]
for all $(p,\rho)$ atoms $a$.

Suppose that  $a$ is a $(p,\rho)$ atom associated to a ball $B$. Then we can write
\[
\begin{aligned}
	\Big\|\sup_{t>0}|e^{-t\Delta_{\nu}} \Delta_\nu^{-|k|/2}(\delta_\nu^*)^ka|\Big\|_p&\lesi \Big\|\sup_{t>0}|e^{-t\Delta_{\nu}} \Delta_\nu^{-|k|/2}(\delta_\nu^*)^ka|\Big\|_{L^p(4B)} +\Big\|\sup_{t>0}|e^{-t\Delta_{\nu}}\Delta_\nu^{-|k|/2}(\delta_\nu^*)^ka|\Big\|_{L^p(\mathbb R^n_+\backslash 4B)}.		
\end{aligned}
\]
Since  $f\mapsto \sup_{t>0}|e^{-t \Delta_{\nu}}f|$ and  $\Delta_\nu^{-|k|/2}(\delta_\nu^*)^k$ are bounded on $L^2(\mathbb R^n_+)$, by  the H\"older's inequality and the standard argument, we have
\[
\Big\|\sup_{t>0}|e^{-t\Delta_{\nu}} \Delta_\nu^{-|k|/2}(\delta_\nu^*)^ka|\Big\|_{L^p(4B)}\lesi 1.
\]
For the second term, we consider two cases.

\textbf{Case 1: $r_B=\rho(x_B)$.} Using \eqref{eq-sub formula}, we have for $x\in (4B)^c$,
\[
\begin{aligned}
	\sup_{t>0}|e^{-t\Delta_{\nu}} \Delta_\nu^{-|k|/2}(\delta_\nu^*)^ka(x)|&=\sup_{t>0}|(\delta_\nu^k\Delta_\nu^{-|k|/2} e^{-t\Delta_{\nu}} )^*a(x)|\\
	&=c\sup_{t>0}\Big|\int_0^\vc  u^{|k|/2}(\delta_\nu^k\Delta_\nu^{-|k|/2} e^{-(t+u)\Delta_{\nu}} )^* a(x)\f{du}{u}\Big|\\
	&=c\sup_{t>0}\Big|\int_0^\vc \int_{B} u^{|k|/2}\delta_\nu^k  p_{t+u}^\nu(y,x) a(y)dy\f{du}{u}\Big|\\
	&\lesi \sup_{t>0}\Big|\int_0^\vc \int_{B} |u^{|k|/2}\delta_\nu^k p_{t+u}^\nu(y,x)| |a(y)|dy\f{du}{u}\Big|.
\end{aligned}
\]
Using Theorem \ref{thm-delta k pt nu} and the fact $\rho(y)\sim \rho(x_B)$ and $|x-y|\sim |x-x_B|$ for $y\in B$ and $x\in \Rn_+\setminus 4B$, we further obtain
\[
\begin{aligned}
		\sup_{t>0}&|e^{-t\Delta_{\nu}} \Delta_\nu^{-|k|/2}(\delta_\nu^*)^ka(x)|\\
		& 
	\lesi \sup_{t>0} \int_0^\vc \int_{B} \f{u^{|k|/2}}{(u+t)^{(n+|k|)/2}}  \exp\Big(-\f{|x-x_B|^2}{c(u+t)}\Big)\Big(\f{\sqrt{u+t}}{\rho(x_B)}\Big)^{-\gamma_\nu} |a(y)|dy\f{du}{u}\\
	& 
	\lesi \|a\|_1\sup_{t>0} \int_0^\vc  \f{1}{(u+t)^{n/2}}  \exp\Big(-\f{|x-x_B|^2}{c(u+t)}\Big)\Big(\f{\sqrt{u+t}}{\rho(x_B)}\Big)^{-\gamma_\nu}  \f{du}{u}\\
	&\lesi   \f{1}{|x-x_B|^n}\Big(\f{\rho(x_B)}{|x-x_B|}\Big)^{\gamma_\nu}\|a\|_1\\
	&\lesi   \f{r_B^{\gamma_\nu}}{|x-x_B|^{n+\gamma_\nu}} |B|^{1-1/p}.
\end{aligned}
\]

It follows that
\[
\Big\|\sup_{t>0}|e^{-t\Delta_{\nu}} \Delta_\nu^{-|k|/2}(\delta_\nu^*)^ka|\Big\|_{L^p(\mathbb R^n_+\backslash 4B)}\lesi 1,
\]
as long as $\f{n}{n+\gamma_\nu}<p\le 1$.

\bigskip

\textbf{Case 2: $r_B<\rho(x_B)$.} 

Using the formula  \eqref{eq-sub formula}, we have
\[
\begin{aligned}
	e^{-t\Delta_{\nu}} \Delta_\nu^{-|k|/2}(\delta_\nu^*)^ka(x)&=c\int_0^\vc  u^{|k|/2}e^{-(t+u)\Delta_\nu}(\delta_\nu^*)^k a(x)\f{du}{u}\\
	&=c\int_0^\vc  u^{|k|/2}[\delta_\nu^ke^{-(t+u)\Delta_\nu}]^* a(x)\f{du}{u}\\
	&=c\int_0^\vc\int_{\Rn_+}  u^{|k|/2} \delta_\nu^kp_{t+u}^\nu (y,x) a(y)dy\f{du}{u}.
\end{aligned}
\]
Using the cancellation property $\displaystyle \int a(x)x^\alpha dx= 0$ for all $\alpha$ with $|\alpha|\le N:=\lfloor n(1/p-1)\rfloor$, we have
\[
\begin{aligned}
	\big|e^{-t\Delta_{\nu}} &\Delta_\nu^{-|k|/2}(\delta_\nu^*)^ka(x)\big|\\
	&=c\Big|\int_0^\vc\int_{\Rn_+}  u^{|k|/2} \Big[\delta_\nu^kp_{t+u}^\nu (y,x)-\sum_{|\alpha|\le N}\f{(y-x_B)^\alpha}{\alpha!}\partial^\alpha \delta_\nu^kp_{t+u}^\nu (x_B,x) \Big] a(y)dy\f{du}{u}\Big|\\
	&\lesi \int_0^\vc\int_{\Rn_+}  u^{|k|/2}   |y-x_B|^{N+1} \sup_{\theta\in [0,1]\atop |\beta|= N+1}|\partial^\beta \delta_\nu^kp_{t+u}^\nu \big(y+\theta(x_B-y),x\big)| |a(y)|dy\f{du}{u}\\
	&\lesi \int_0^\vc\int_{\Rn_+}  u^{|k|/2}   r_B^{N+1} \sup_{\theta\in [0,1]\atop |\beta|= N+1}|\partial^\beta \delta_\nu^kp_{t+u}^\nu \big(y+\theta(x_B-y),x\big)| |a(y)|dy\f{du}{u}
\end{aligned}
\]
Note that for $y\in B$, $x\in \Rn_+\backslash 4B$ and $\theta \in [0,1]$, we have $y+\theta(x_B-y)\in B$ and hence $\rho(y+\theta(x_B-y))\sim \rho(x_B)$ and $|x-[y+\theta(x_B-y)]|\sim |x-y|\sim |x-x_B|$. This, together with Proposition \ref{prop-gradient x y d>2}, implies, for $x\in \Rn_+\backslash 4B$ and $y\in B$,
\[
\begin{aligned}
	\sup_{\theta\in [0,1]\atop |\beta|= N+1}&|\partial^\beta \delta_\nu^kp_{t+u}^\nu \big(y+\theta(x_B-y),x\big)|\\
	&\lesi \Big(\f{1}{(t+u)^{(N+1)/2}}+\f{1}{\rho(x_B)^{N+1}}\Big)\f{1}{(t+u)^{(n+k)/2}}\exp\Big(-\f{|x-x_B|^2}{c(t+u)}\Big)\Big(\f{\sqrt{t+u}}{\rho(x_B)}\Big)^{-\gamma_\nu}\\
	&\lesi  \Big(\f{1}{|x-x_B|^{N+1}}+\f{1}{\rho(x_B)^{N+1}}\Big)\f{1}{(t+u)^{(n+k)/2}} \exp\Big(-\f{|x-x_B|^2}{2c(t+u)}\Big)\Big(1+\f{\sqrt{t+u}}{\rho(x_B)}\Big)^{-\gamma_\nu}.
\end{aligned}
\] 
Hence,  for $x\in \Rn_+\backslash 4B$ and $y\in B$ we have
\[
\begin{aligned}
		&\big|e^{-t\Delta_{\nu}} \Delta_\nu^{-|k|/2}(\delta_\nu^*)^ka(x)\big|\\
		&\lesi \|a\|_1\int_0^\vc   \f{u^{|k|/2}}{(t+u)^{(n+k)/2}}     \Big(\f{r_B^{N+1}}{|x-x_B|^{N+1}}+\f{r_B^{N+1}}{\rho(x_B)^{N+1}}\Big) \exp\Big(-\f{|x-x_B|^2}{2c(t+u)}\Big) \Big(1+\f{\sqrt{t+u}}{\rho(x_B)}\Big)^{-\gamma_\nu}\f{du}{u}\\
		&\lesi \|a\|_1\int_0^\vc   \f{1}{(t+u)^{n/2}}     \Big(\f{r_B^{N+1}}{|x-x_B|^{N+1}}+\f{r_B^{N+1}}{\rho(x_B)^{N+1}}\Big) \exp\Big(-\f{|x-x_B|^2}{2c(t+u)}\Big) \Big(1+\f{\sqrt{t+u}}{\rho(x_B)}\Big)^{-\gamma_\nu}\f{du}{u}\\
		&\lesi \|a\|_1 \Big(\f{r_B^{N+1}}{|x-y|^{N+1}}+\f{r_B^{N+1}}{\rho(x_B)^{N+1}}\Big) \Big(1+\f{|x-x_B|}{\rho(x_B)}\Big)^{-\gamma_\nu},
\end{aligned}
\]
which implies
\[
\begin{aligned}
	\sup_{t>0}\big|e^{-t\Delta_{\nu}} \Delta_\nu^{-|k|/2}(\delta_\nu^*)^ka(x)\big|&\lesi \|a\|_1 \Big(\f{r_B^{N+1}}{|x-x_B|^{N+1}}+\f{r_B^{N+1}}{\rho(x_B)^{N+1}}\Big) \Big(1+\f{|x-x_B|}{\rho(x_B)}\Big)^{-\gamma_\nu}\\
	&\lesi \|a\|_1 \Big[\f{r_B^{N+1}}{|x-x_B|^{N+1}}+\f{r_B^{N+1}}{\rho(x_B)^{N+1}} \Big(\f{\rho(x_B)}{|x-x_B|}\Big)^{\gamma_\nu}\Big].
\end{aligned}
\]
Since $r_B\le \min\{\rho(x_B), |x-x_B|\}$, this further implies
\[
\begin{aligned}
	\sup_{t>0}\big|e^{-t\Delta_{\nu}} \Delta_\nu^{-|k|/2}(\delta_\nu^*)^ka(x)\big|&\lesi \|a\|_1 \Big[\f{r_B^{N+1}}{|x-y|^{N+1}}+\Big(\f{r_B}{\rho(x_B)}\Big)^{\gamma_\nu\wedge (N+1)} \Big(\f{\rho(x_B)}{|x-x_B|}\Big)^{\gamma_\nu\wedge (N+1)}\Big]\\
	&\lesi \|a\|_1 \Big[\f{r_B^{N+1}}{|x-y|^{N+1}}+\Big(\f{r_B}{|x-x_B|}\Big)^{\gamma_\nu\wedge (N+1)} \Big]
\end{aligned}
\]
for $x\in \Rn_+\backslash 4B$ and $y\in B$.

Consequently,
\[
\Big\|\sup_{t>0}|e^{-t\Delta_{\nu}} \Delta_\nu^{-|k|/2}(\delta_\nu^*)^ka|\Big\|_{L^p(\mathbb R^n_+\backslash 4B)}\lesi 1,
\]
as long as $\f{n}{n+\gamma_\nu}<p\le 1$.

This completes our proof.

\end{proof}
{\bf Acknowledgement.} The author was supported by the research grant ARC DP140100649 from the Australian Research Council.

\end{document}